\documentclass[reqno]{amsart}
\usepackage{amsfonts}
\usepackage{amssymb}
\usepackage[T1]{fontenc}
\usepackage{newtxtext,newtxmath}
\usepackage{mathtools}
\usepackage{enumitem}
\usepackage[hidelinks]{hyperref}
\usepackage[nameinlink,noabbrev]{cleveref}
\usepackage{microtype}

\numberwithin{equation}{section}
\setlist{nosep,leftmargin=2.1em}
\newtheorem{theorem}{Theorem}[section]
\newtheorem{lemma}[theorem]{Lemma}
\newtheorem{proposition}[theorem]{Proposition}
\theoremstyle{definition}
\newtheorem{definition}[theorem]{Definition}
\newtheorem{example}[theorem]{Example}
\theoremstyle{remark}
\newtheorem{remark}[theorem]{Remark}
\crefname{theorem}{Theorem}{Theorems}
\crefname{lemma}{Lemma}{Lemmas}
\crefname{proposition}{Proposition}{Propositions}
\crefname{definition}{Definition}{Definitions}
\crefname{example}{Example}{Examples}
\crefname{remark}{Remark}{Remarks}
\newcommand{\func}[1]{\operatorname{#1}}

\begin{document}
\title[Abelian maximal pattern complexity of two-dimensional words]{Abelian
maximal pattern complexity of two-dimensional words}
\author{Qingcheng Zeng}
\address{School of Mathematical Sciences, Beihang University, Beijing
100083, P. R. China}
\email{qczeng@buaa.edu.cn}
\author{Yumei Xue}
\address{School of Mathematical Sciences, Beihang University, Beijing
100083, P. R. China}
\email{yxue@buaa.edu.cn}
\author{Cheng Zeng}
\address{School of Mathematics and Information Science, Shandong Technology
and Business University, Yantai, Shandong Province 264003, P. R. China}
\email{czeng@sdtbu.edu.cn}

\begin{abstract}
In this paper, we study the maximal pattern complexity of two-dimensional
words up to Abelian equivalence. We establish a lower bound for the Abelian
maximal pattern complexity of two-dimensional words that are non-doubly
periodic by projection under the existence of a transverse recurrence
direction or strong recurrence. We further show that the bound is attained
for every alphabet size. As a consequence, we characterize double
periodicity of strongly recurrent binary words by the boundedness of their
Abelian maximal pattern complexity.
\end{abstract}

\subjclass[2020]{Primary 68R15; Secondary 37B10}
\thanks{Yumei Xue is the corresponding author. }
\keywords{two-dimensional words, Abelian maximal pattern complexity,
non-doubly periodic by projection, strong recurrence, recurrence direction}
\maketitle

\section{Introduction}

The study of combinatorial complexity of infinite words is a central topic
in symbolic dynamics and combinatorics on words. Beyond classical factor
complexity, Abelian complexity and maximal pattern complexity provide two
natural variants, with many results established in the one-dimensional
setting \cite{CH1973,KZ2002a,KZ2002b,KR2006,RSZ2011,KWZ2015}. These notions
have also been studied in higher dimensions \cite%
{KX2004,KRX2006,QRWX2010,Puzynina2019}, but Abelian maximal pattern
complexity has received relatively little attention. In this paper, we study
Abelian maximal pattern complexity for two-dimensional words.

\subsection{Factor and Abelian complexity}

\ \thinspace

Let $\mathbb{A}$ be a finite nonempty set. We write $\mathbb{A}^{\ast },%
\mathbb{A}^{\mathbb{N}},\mathbb{A}^{\mathbb{Z}}$ and $\mathbb{A}^{\mathbb{Z}%
^{2}}$, respectively, for the set of finite words, the set of (right)
infinite words, the set of bi-infinite words and the set of two-dimensional
words over the alphabet $\mathbb{A}$, where $\mathbb{N}=\{0,1,2,\ldots \}.$
For an infinite word $x=x(0)x(1)x(2)\cdots \in \mathbb{A}^{\mathbb{N}}$ with 
$x_{i}\in \mathbb{A}$ for all $i\in \mathbb{N}$, we denote by $\mathcal{F}%
_{x}(n)$ the set of all \emph{factors} of $x$ of length $n$, i.e., the set
of all finite words of the form $x(i)x(i+1)\cdots x(i+n-1)$ with $i\in 
\mathbb{N}$. The \emph{factor complexity }of $x$ is defined by%
\begin{equation*}
p_{x}:\mathbb{N}\longrightarrow \mathbb{N},\text{\ }p_{x}(n)=\#\mathcal{F}%
_{x}(n),
\end{equation*}%
where $\#\mathcal{F}_{x}(n)$ denotes the cardinality of $\mathcal{F}_{x}(n)$.

Recall that two words $u,v\in \mathbb{A}^{\ast }$ are said to be \emph{%
Abelian equivalent}, denoted by $u\sim _{\mathrm{ab}}v$, if and only if $%
\left\vert u\right\vert _{a}=\left\vert v\right\vert _{a}$ for all $a\in 
\mathbb{A}$, where $\left\vert u\right\vert _{a}$ denotes the number of
occurrences of the letter $a$ in $u$. A direct check shows that $\sim _{%
\mathrm{ab}}$ is an equivalence relation on $\mathbb{A}^{\ast }$. Let 
\begin{equation*}
\mathcal{F}_{x}^{\mathrm{ab}}(n):=\mathcal{F}_{x}(n)/{\sim _{\mathrm{ab}}}
\end{equation*}%
be the set of Abelian equivalence classes of factors of length $n$. The 
\emph{Abelian complexity }of $x$ is given by 
\begin{equation*}
p_{x}^{\mathrm{ab}}:\mathbb{N}\longrightarrow \mathbb{N}\text{, }p_{x}^{%
\mathrm{ab}}(n)=\#\mathcal{F}_{x}^{\mathrm{ab}}(n).
\end{equation*}%
The notion of Abelian complexity has been extensively studied; see, e.g., 
\cite{RSZ2011,FP2023}.

There are close parallels between factor complexity and Abelian complexity.
For example, both characterize periodic bi-infinite words \cite%
{CH1973,MH1940}. Recall that an infinite word $x$ is \emph{periodic} if
there exists a positive integer $p$ such that $x(i+p)=x(i)$ for all $i\in 
\mathbb{N}$, and \emph{ultimately periodic} if $x(i+p)=x(i)$ for all
sufficiently large $i$. An infinite word is \emph{aperiodic} if it is not
ultimately periodic. Moreover, by Morse and Hedlund \cite{MH1940} and Coven
and Hedlund \cite{CH1973}, respectively, $x$ is Sturmian if and only if 
\begin{equation*}
p_{x}(n)=n+1\text{ for all }n\geq 0,
\end{equation*}%
\ equivalently, if and only if 
\begin{equation*}
p_{x}^{\mathrm{ab}}(n)=2\text{ for all }n\geq 1.
\end{equation*}%
Factor complexity also gives a characterization of ultimately periodic words 
\cite{MH1940}, while Abelian complexity does not. In fact, every Sturmian
word and the ultimately periodic word $01^{\infty }=0111\cdots $\ have the
same constant Abelian complexity $2$.

The notion of Abelian complexity has also been studied for two-dimensional
words. Let $x\in \mathbb{A}^{\mathbb{Z}^{2}}$. For $m,n\geq 1$, the \emph{%
Abelian complexity} $p_{x}^{\mathrm{ab}}(m,n)$ is defined as the number of
Abelian equivalence classes of the rectangular blocks 
\begin{equation*}
x[t+[0,m-1]\times \lbrack 0,n-1]],\text{\ }t\in \mathbb{Z}^{2}.
\end{equation*}%
Here two finite blocks are Abelian equivalent if they contain the same
number of occurrences of each letter.

Puzynina \cite{Puzynina2019} proved that if $x$ is recurrent and 
\begin{equation*}
p_{x}^{\mathrm{ab}}(m,n)\leq 2\text{\ for\ all\ }m,n\geq 1,
\end{equation*}%
then $x$ has a nonzero period vector. The bound is essentially sharp, since
there exist recurrent aperiodic two-dimensional words satisfying 
\begin{equation*}
p_{x}^{\mathrm{ab}}(m,n)\leq 3\text{\ for all }m,n\geq 1.
\end{equation*}

\subsection{Maximal pattern complexity}

\ \thinspace

A distinct notion of complexity for infinite words, called maximal pattern
complexity, was introduced by Kamae and Zamboni \cite{KZ2002a}.

Let $x\in \mathbb{A}^{\mathbb{N}}$. For $k\geq 1$, let $\Sigma _{k}(\mathbb{N%
}):=\{S\subset \mathbb{N}:\sharp S=k\}.$ We call an element $%
S=\{s_{1}<s_{2}<\cdots <s_{k}\}\in \Sigma _{k}(\mathbb{N})$ a $k$-\emph{%
pattern}. Set%
\begin{equation*}
x[S]:=x(s_{1})x(s_{2})\cdots x(s_{k})\in \mathbb{A}^{k}.
\end{equation*}%
For each $n\in \mathbb{N},$ the word $x[n+S]$ is called an $S$-\emph{factor}
of $x$, where $n+S:=\{n+s_{1},n+s_{2},\ldots ,n+s_{k}\}$. We denote by $%
\mathcal{F}_{x}(S)$ the set of all $S$-factors of $x$. The \emph{pattern
complexity }$p_{x}(S)$ is given by%
\begin{equation*}
p_{x}(S):=\#\mathcal{F}_{x}(S),
\end{equation*}%
and the \emph{maximal pattern complexity }of $x$ is defined by 
\begin{equation*}
p_{x}^{\ast }(k):=\sup_{S\in \Sigma _{k}(\mathbb{N})}p_{x}(S).
\end{equation*}

Kamae and Zamboni \cite{KZ2002a} revealed that maximal pattern complexity
also provides a characterization of ultimately periodic words.

\begin{theorem}[\protect\cite{KZ2002a}]
\label{thm:K-Z}Let $x\in \mathbb{A}^{\mathbb{N}}$. Then the following
statements are equivalent.

\begin{enumerate}
\item[(i)] $x$ is ultimately periodic.

\item[(ii)] $p_{x}^{\ast }(k)$ is uniformly bounded in $k$.

\item[(iii)] $p_{x}^{\ast }(k)<2k$ for some positive integer $k$.
\end{enumerate}
\end{theorem}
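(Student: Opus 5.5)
We prove the cycle (i)$\Rightarrow$(ii)$\Rightarrow$(iii)$\Rightarrow$(i). The first two arrows are short; the real work is in (iii)$\Rightarrow$(i), which we prove in contrapositive form: if $x$ is not ultimately periodic, then $p_{x}^{\ast }(k)\geq 2k$ for every $k\geq 1$.

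\emph{(i)$\Rightarrow$(ii).} Suppose $x(i+p)=x(i)$ for all $i\geq N$, and fix a $k$-pattern $S=\{s_{1}<\cdots <s_{k}\}$. Then $x[n+S]$ depends only on which coordinates $n+s_{j}$ are $<N$, their values there, and the residue of $n$ modulo $p$. Once $n\geq N$, only the residue matters, so there are at most $p$ such factors. For $n<N$ there are at most $N$ further factors. Hence $p_{x}^{\ast }(k)\leq N+p$ for all $k$.

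\emph{(ii)$\Rightarrow$(iii).} If $p_{x}^{\ast }(k)\leq C$ for all $k$, then choosing $k>C/2$ gives $p_{x}^{\ast }(k)<2k$.

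\emph{(iii)$\Rightarrow$(i): setup.} Assume $x$ is not ultimately periodic. We build patterns $S_{k}$ inductively with $p_{x}(S_{k})\geq 2k$. Rather than track this count directly, we strengthen the induction hypothesis using the set $\mathcal{F}_{x}(S)$ viewed as a collection of words. Starting with $k=1$: since $x$ is not ultimately periodic, it is not eventually constant, so some letter occurs in it, and in fact at least two letters occur infinitely often. Thus $p_{x}(\{0\})\geq 2$.

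\emph{Inductive step.} Suppose $S$ is a $k$-pattern with $p_{x}(S)\geq 2k$. We seek $t$, far to the right of $S$, such that adjoining $t$ adds at least two new factors, so that $p_{x}(S\cup \{t\})\geq p_{x}(S)+2$. An $S$-factor $w$ "splits" under $t$ if the positions $n$ with $x[n+S]=w$ (taken among the infinitely many recurring ones) see at least two different values of $x(n+t)$. We want $t$ for which two distinct factors split, or one factor splits into three. The natural argument goes through limits. Pass to a subsequence and let $y$ be a limit point of shifts of $x$. Consider the finite family of partitions of $\mathbb{N}$ according to $x[n+S]$. If, for every large $t$, at most one class splits and only into two, then $x(n+t)$ is determined by $x[n+S]$ up to a single binary ambiguity, uniformly in $t$. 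A pigeonhole/compactness argument on the finitely many $S$-factors then yields two times $t<t'$ at which the induced maps coincide, so $x(n+t)=x(n+t')$ for all large $n$. That means $x$ is ultimately periodic with period $t'-t$, a contradiction.

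\emph{Main obstacle.} The delicate step is making this "at most one new factor for all $t$ forces periodicity" argument rigorous. The plan is to follow Kamae--Zamboni and work with the superword structure: take the pattern $S$ achieving the sup at level $k$, and show that if no $t$ adds two factors, then $x[n+S]$ determines $x(n+t)$ on a set of $n$ of full upper density, with uniform bounds. Completing the proof then requires handling correctly the distinction between "ultimately" and "recurrently" occurring factors. The approach fixes this by restricting throughout to factors occurring infinitely often, which does not affect the lower bound.
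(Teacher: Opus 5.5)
The paper gives no proof of this statement; it is quoted from \cite{KZ2002a}, so there is nothing in the paper to compare against. Your arguments for (i)$\Rightarrow$(ii)$\Rightarrow$(iii) and the base case $p_x(\{0\})\ge 2$ are fine. The proof of (iii)$\Rightarrow$(i), however, has a genuine gap, and two of the specific steps you propose are false.

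\emph{Restricting to recurrent factors.} Restricting to factors that occur infinitely often does affect the lower bound. Let $x=\mathbf{1}_{O}$ with $O=\{2^{m}:m\ge 0\}$; this word is not ultimately periodic. Let $S$ be a $k$-pattern and $n>\max S$. Then $n+S$ contains at most one power of $2$: if $n+s_i=2^{a}<2^{b}=n+s_j$, then $s_j-s_i=2^{b}-2^{a}\ge 2^{a}\ge n>\max S$, which is impossible. Hence exactly $k+1$ of the $S$-factors occur infinitely often, namely $0^{k}$ and the $k$ words with a single $1$. Since $k+1<2k$ for $k\ge 2$, an induction on recurrent factors cannot reach $2k$. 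In fact your induction already breaks at $k=1$: for $S=\{0\}$ and every $t$, the pair $(1,1)$ occurs at distance $t$ at most once, so no $t$ adds two recurrent factors. For this $x$, the value $p_x^{\ast}(2)=4$ is attained only through a factor $11$ that occurs exactly once. So the $2k$ bound genuinely needs transient factors.

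\emph{The pigeonhole step.} Write $N_w=\{n:x[n+S]=w\}$. Suppose that at $t$ and $t'$ the same single class $w$ is ambiguous and every other class has the same forced letter. This says nothing about $x(n+t)$ versus $x(n+t')$ for $n\in N_w$, and $N_w$ may have density one. In the example above, for every $S$ and every $t>\max S$, the only recurrent class that splits is $0^{k}$. The forced recurrent letters (a $0$ after each single-$1$ word) do not depend on $t$. Yet for $t<t'$ and $n=2^{m}-t$ with $m$ large, we have $n\in N_{0^{k}}$, $x(n+t)=1$ and $x(n+t')=0$.

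So in the recurrent-factor version you adopt, the implication ``at most one binary split for all large $t$ implies ultimate periodicity'' is false. In the all-factor version it is a statement at least as strong as the theorem itself (it yields the theorem at once by your induction), and your outline gives no argument for it.

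For $S=\{0\}$ over $\{0,1\}$ the statement does hold, but by composition rather than pigeonhole. The pairs $(0,1)$ and $(1,0)$ occur at every distance, since otherwise $x$ would be ultimately periodic. Suppose no distance carried both $(0,0)$ and $(1,1)$. Then any positive $t\in N_1-N_1$ gives $N_0+t\subset N_1$, and any positive $t'\in N_0-N_0$ gives $N_1+t'\subset N_0$, whence $t+t'$ is a period, a contradiction. For $|S|\ge 2$ this breaks down, because $x(n+t)$ alone does not determine the $S$-class of $n+t$. Controlling that is exactly the missing idea.
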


Kamae and Rao \cite{KR2006} generalized the connection between items (i) and
(iii) in Theorem \ref{thm:K-Z}. An infinite word $x\in \mathbb{A}^{\mathbb{N}%
}$ is called \emph{periodic by projection} if there exists a nonempty set $%
B\subsetneq \mathbb{A}$ such that 
\begin{equation*}
\pi _{B}(x):=\mathbf{1}_{B}\circ x\in \{0,1\}^{\mathbb{N}}
\end{equation*}%
is ultimately periodic, where $\mathbf{1}_{B}$ denotes the characteristic
function of $B$. A word is \emph{aperiodic by projection} if it is not
periodic by projection. The following result relates maximal pattern
complexity to this notion of periodicity.

\begin{theorem}[\protect\cite{KR2006}]
\label{thm:K-R}Let $x\in \mathbb{A}^{\mathbb{N}}$ be aperiodic by projection
and $\sharp \mathbb{A}=r\geq 2$. Then for every positive integer $k$, $%
p_{x}^{\ast }(k)\geq rk.$
\end{theorem}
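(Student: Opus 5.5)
Theorem \ref{thm:K-R} will be proved by induction on $k$, building an increasing chain of patterns $S_1\subset S_2\subset\cdots$ with $\#S_k=k$ and $p_x(S_k)\ge rk$. The natural bookkeeping device is a refinement structure. For a pattern $S$ and a word $w\in\mathcal F_x(S)$, let $N_S(w)=\{n: x[n+S]=w\}$. Adding a point $s$ to $S$ splits each class $N_S(w)$ according to the letter $x(n+s)$. So $p_x(S\cup\{s\})-p_x(S)$ counts the additional letters that appear. The goal is to show that a suitable $s$ always gains at least $r$ new factors. The base case is $k=1$. Here $p_x(\{0\})$ equals the number of letters occurring infinitely often, which is $r$: if some letter $a$ occurred only finitely often, then $\pi_{\{a\}}(x)$ would be ultimately $0$, hence ultimately periodic, contradicting aperiodicity by projection. (If a letter never occurs, the same argument applies.)

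For the inductive step I would not track just $S$ but also the partition of $\mathbb N$ into the classes $N_S(w)$, up to finite sets. The key lemma I would aim for is the following. Suppose $S$ is given, and suppose that for every large shift $s$ (ranging over $s>\max S$ chosen along a sparse sequence) adding $s$ creates fewer than $r$ new factors. Then some projection $\pi_B(x)$ is ultimately periodic. The mechanism runs as follows. Fewer than $r$ new factors means that on all but at most $r-1$ classes, the letter $x(n+s)$ is constant on $N_S(w)$, up to the refinements counted. Choose $s$ along a subsequence so that, by compactness of $\mathbb A^{\mathbb N}$, the "colouring" $w\mapsto$ (set of letters seen at $n+s$, $n\in N_S(w)$) stabilizes. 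Then the letter at position $m$ is essentially determined by which class $m-s$ lies in. Use two such large shifts $s<s'$ with identical behaviour, together with the fact that the total number of letters is $r$ while at most $r-1$ splits occur. From these, extract a nonempty proper $B\subsetneq\mathbb A$ such that $\mathbf 1_B(x(m))$ depends only on the class of $m-s$ and also of $m-s'$. This yields $\mathbf 1_B(x(m+(s'-s)))=\mathbf 1_B(x(m))$ for large $m$, i.e.\ $\pi_B(x)$ is ultimately periodic.

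The main obstacle is precisely this counting in the key lemma. It must show that "fewer than $r$ new $S$-factors" forces some letter-set $B$ to be determined by the $S$-class, rather than merely bounding splits. I would first try a pigeonhole argument on the letter-sets appearing within a class. Let $A_w(s)\subset\mathbb A$ denote the letters occurring at $n+s$ for $n\in N_S(w)$. The increase is $\sum_w(\#A_w(s)-1)$. If this is $<r-1$ for all large $s$, then there is a partition of $\mathbb A$, coarser than the discrete one, that is constant on classes. Any block $B$ of that partition (nonempty and proper, since there are at least two blocks) then serves. If the inductive chain instead requires exactly $r$ new factors per step, I would fall back on Kamae–Zamboni type arguments (Theorem \ref{thm:K-Z} is the case $r=2$, up to threshold). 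In that fallback, the projection $\pi_B$ reduces the problem to the binary setting, and one applies the binary bound to the coarsened alphabet, adding contributions across blocks.
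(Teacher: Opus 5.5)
The paper only quotes this theorem from \cite{KR2006} and gives no proof, so your proposal has to stand on its own. It has a genuine gap, exactly at the step you flag. (Your base case is fine, although $p_x(\{0\})$ counts the letters that occur at all, not those that occur infinitely often.)

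Adding $s$ raises $p_x$ by $\sum_w(\#A_w(s)-1)$. A nonempty $B\subsetneq\mathbb{A}$ with every $A_w(s)$ contained in $B$ or in $\mathbb{A}\setminus B$ exists if and only if the hypergraph on $\mathbb{A}$ with edges $A_w(s)$ is disconnected. That hypergraph has at least $r-\sum_w(\#A_w(s)-1)$ components, so disconnection is forced only when the gain is at most $r-2$. With gain exactly $r-1$ the sets can link all the letters: for $r=2$, a single class seeing both letters; for $\mathbb{A}=\{a,b,c\}$, one class seeing $\{a,b\}$ and another seeing $\{b,c\}$.

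Your pigeonhole-on-colourings step is sound as far as it goes. Two shifts $s<s'$ with the same colouring and gain at most $r-2$ do give $\mathbf{1}_B(x(n+s))=\mathbf{1}_B(x(n+s'))$ for all $n$. But what it actually proves is that all but finitely many $s$ have gain at least $r-1$. Hence it gives only $p_x^{*}(k)\ge r+(k-1)(r-1)=(r-1)k+1$. For $r=2$ this is $k+1$, which already follows from the factor complexity of a non-ultimately-periodic word.

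The missing $k-1$ is the whole content of the theorem. You would have to show that a connected configuration of gain $r-1$ cannot occur for all large $s$, for whatever pattern $S_k$ the previous steps produced, unless some $\pi_B(x)$ is ultimately periodic. Two shifts with the same colouring say nothing about how $x(n+s)$ and $x(n+s')$ compare for $n$ in a split class, so this needs a new idea. Nothing in the proposal shows that an arbitrary $S$ even admits a one-point extension gaining $r$, which is what your induction requires.

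The fallback does not close the gap. In the bad case the colouring produces no nontrivial $B$ to project with. Each $\pi_B(x)$ is indeed not ultimately periodic, so Theorem~\ref{thm:K-Z} gives $p_x^{*}(k)\ge p_{\pi_B(x)}^{*}(k)\ge 2k$. But these binary bounds are attained on different patterns, and pattern complexity is not additive over the blocks of a partition. So they do not combine to $rk$, and for $r\ge 3$ you are left with $2k$.

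For comparison, the Abelian analogue proved in this paper (Theorem~\ref{thm:FA-lower}) does not extend a pattern point by point. It takes the whole $k$-pattern $\{h_0,\dots,h_{k-1}\}$ from a sampling sequence with (H$_1$), the tail property (H$_2$) and the Ramsey uniformity \eqref{eq:H3-Ramsey}. It then counts $\Omega[\{0,\dots,k-1\}]$ globally. Some global structure of this kind is what your incremental scheme lacks.
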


Kamae, Rao and Xue \cite{KRX2006} extended maximal pattern complexity to
two-dimensional words. For simplicity, we use the same terminology and
notation for patterns in $\mathbb{N}$ and in $\mathbb{Z}^{2}$; the ambient
index set will always be clear from context.

Let $x\in \mathbb{A}^{\mathbb{Z}^{2}}$ be a two-dimensional word. For $k\geq
1$, let $\Sigma _{k}(\mathbb{Z}^{2}):=\{S\subset \mathbb{Z}^{2}:\#S=k\}.$ An
element $S\in \Sigma _{k}(\mathbb{Z}^{2})$ is called a $k$-\emph{pattern}.
For $S\in \Sigma _{k}(\mathbb{Z}^{2})$ and $t\in \mathbb{Z}^{2}$, set 
\begin{equation*}
x[t+S]:=(x(t+s))_{s\in S}\in \mathbb{A}^{S},
\end{equation*}%
which is called an $S$-\emph{factor} of $x$. Let $\mathcal{F}_{x}(S)\ $be
the set of all $S$-factors of $x$. The \emph{pattern complexity }$p_{x}(S)$
is given by%
\begin{equation*}
p_{x}(S):=\#\mathcal{F}_{x}(S),
\end{equation*}%
and the \emph{maximal pattern complexity }of $x$ is defined by 
\begin{equation*}
p_{x}^{\ast }(k):=\sup_{S\in \Sigma _{k}(\mathbb{Z}^{2})}\sharp \mathcal{F}%
_{x}(S).
\end{equation*}

A nonzero vector $p\in \mathbb{Z}^{2}$ is called a \emph{period} of $x$ if $%
x(t+p)=x(t)$\ for all $t\in \mathbb{Z}^{2}.$ The set of all periods of $x$
is denoted by $\func{Per}(x)$. A word $x\in \mathbb{A}^{\mathbb{Z}^{2}}$ is
called \emph{doubly periodic} if $\func{Per}(x)$ contains two linearly
independent vectors. Equivalently, $\func{rank}\func{Per}(x)=2,$ where $%
\func{rank}H$ denotes the maximal number of linearly independent vectors in
a subgroup $H\leq \mathbb{Z}^{2}$.

In \cite{KRX2006}, a two-dimensional version of the notion of strong
recurrence is introduced. Let $\mathbb{P}^{1}(\mathbb{R})$ denote the real
projective line, i.e., the set of one-dimensional linear subspaces of $%
\mathbb{R}^{2}$. For $q\in \mathbb{R}^{2}\setminus \{0\}$, define 
\begin{equation*}
\func{dir}(q):=\mathbb{R}q\in \mathbb{P}^{1}(\mathbb{R}).
\end{equation*}%
Thus $\func{dir}(q)=\func{dir}(q^{\prime })$ if and only if $q^{\prime
}=\lambda q$ for some $\lambda \in \mathbb{R}\setminus \{0\}$; in
particular, $q$ and $-q$ determine the same direction. Identifying $\mathbb{P%
}^{1}(\mathbb{R})$ with $\mathbb{R}/\pi \mathbb{Z}$, where $[\theta
]:=\theta +\pi \mathbb{Z}$ denotes the equivalence class of $\theta \in 
\mathbb{R}$, we equip it with the metric 
\begin{equation*}
d_{\pi }([\theta ],[\phi ]):=\min_{m\in \mathbb{Z}}|\theta -\phi -m\pi |,%
\text{ }\theta ,\phi \in \mathbb{R}.
\end{equation*}%
A word $x\in \mathbb{A}^{\mathbb{Z}^{2}}$ is called \emph{strongly recurrent}
if there exists $\delta >0$ such that for every $N\geq 1$, there exist two
nonzero vectors $u_{N},v_{N}\in \mathbb{Z}^{2}$ such that 
\begin{equation*}
x[\Sigma _{N}+u_{N}]=x[\Sigma _{N}]=x[\Sigma _{N}+v_{N}]\text{ and }d_{\pi }(%
\func{dir}(u_{N}),\func{dir}(v_{N}))\geq \delta ,
\end{equation*}%
where $\Sigma _{N}=[-N,N]^{2}\cap \mathbb{Z}^{2}$.

Kamae, Rao and Xue also established the following two-dimensional analogue
of Theorem \ref{thm:K-Z}.

\begin{theorem}[\protect\cite{KRX2006}]
Let $x\in \mathbb{A}^{\mathbb{Z}^{2}}$. Then the following statements are
equivalent.

\begin{enumerate}
\item[(i)] $x$ is doubly periodic.

\item[(ii)] $p_{x}^{\ast }(k)$ is uniformly bounded in $k$.

\item[(iii)] $x$ is strongly recurrent and $p_{x}^{\ast }(k)<2k$ for some
positive integer $k$.
\end{enumerate}
\end{theorem}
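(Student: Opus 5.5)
The plan is to prove the cycle (i)$\Rightarrow$(ii)$\Rightarrow$(iii)$\Rightarrow$(i), with nearly all the work in the last implication.

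\emph{(i)$\Rightarrow$(ii).} If $\func{Per}(x)$ has rank $2$, then $L:=\func{Per}(x)\cup\{0\}$ is a subgroup of finite index in $\mathbb{Z}^{2}$. For every pattern $S$, the factor $x[t+S]$ depends only on the coset $t+L$. Hence $p_{x}(S)\le[\mathbb{Z}^{2}:L]$ for all $S$, and $p_{x}^{\ast}$ is uniformly bounded.

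\emph{(ii)$\Rightarrow$(i).} Suppose $p_{x}^{\ast}(k)\le C$ for all $k$. I claim the $\mathbb{Z}^{2}$-orbit $\{\sigma^{t}x\}$ has at most $C$ elements, where $(\sigma^{t}x)(s)=x(t+s)$.
\begin{itemize}
\item Suppose instead that $\sigma^{t_{0}}x,\dots,\sigma^{t_{C}}x$ are pairwise distinct.
\item For each pair $i<j$, choose a position $s_{ij}$ with $x(t_{i}+s_{ij})\ne x(t_{j}+s_{ij})$.
\item Let $S$ be the finite set of all the $s_{ij}$. Then the $C+1$ factors $x[t_{i}+S]$ are pairwise distinct, contradicting $p_{x}(S)\le C$.
\end{itemize}
So the orbit is finite. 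Its stabilizer $\func{Per}(x)\cup\{0\}$ therefore has finite index, hence rank $2$.

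\emph{(ii)$\Rightarrow$(iii).} By the previous step $x$ is doubly periodic. Taking $u_{N}:=p_{1}$ and $v_{N}:=p_{2}$, two fixed independent periods, shows $x$ is strongly recurrent with $\delta=d_{\pi}(\func{dir}(p_{1}),\func{dir}(p_{2}))>0$. Moreover $p_{x}^{\ast}(k)\le C<2k$ once $k>C/2$.

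\emph{(iii)$\Rightarrow$(i).} This is the main obstacle. I would argue by contraposition: if $x$ is strongly recurrent but not doubly periodic, then $p_{x}^{\ast}(k)\ge 2k$ for every $k$. I would imitate the one-dimensional Kamae--Zamboni argument behind \Cref{thm:K-Z} and build patterns $S_{1}\subset S_{2}\subset\cdots$ with $\#S_{k}=k$ and $p_{x}(S_{k})\ge 2k$.
\begin{itemize}
\item \emph{Base case.} Take $S_{1}$ a single point. Since $x$ is not constant, $p_{x}(S_{1})\ge 2$.
\item \emph{Inductive step.} It suffices to find $s\notin S_{k}$ such that at least two distinct $S_{k}$-factors each extend in two different ways to $(S_{k}\cup\{s\})$-factors. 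This raises the count by at least $2$.
\item \emph{Suppose no such $s$ exists.} Then for every $s$, the letter $x(t+s)$ is a function of $x[t+S_{k}]$ except on the occurrences of a single exceptional factor $w_{s}$.
\item \emph{Extract a period.} Let $N$ be large with $S_{k}\subset\Sigma_{N}$. Strong recurrence gives $u_{N},v_{N}$ with $x[\Sigma_{N}+u_{N}]=x[\Sigma_{N}]=x[\Sigma_{N}+v_{N}]$. Apply the determination property with $s$ ranging over translates by $u_{N}$ and $v_{N}$. I expect this to force $\pi_{B}$-type rigidity: $x(t+u_{N})=x(t)$ outside occurrences of the exceptional factors. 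A further pigeonhole over the finitely many $S_{k}$-factors should remove the exceptions, giving a genuine period in (close to) direction $\func{dir}(u_{N})$, and likewise for $v_{N}$.
\item \emph{Conclude.} The angle condition $d_{\pi}\ge\delta$ makes these two periods independent, so $x$ is doubly periodic, a contradiction.
\end{itemize}
The delicate point is controlling the exceptional factor $w_{s}$ uniformly in $s$, so that a single period emerges rather than a direction-dependent family. I would handle it by choosing $s$ far out along each recurrence direction and using that the set of possible exceptional factors is finite, so one exceptional factor recurs along an infinite family of such $s$.
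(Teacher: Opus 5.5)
\paragraph{Where the proposal stands.} The paper does not prove this theorem; it quotes it from \cite{KRX2006}, so there is no in-paper argument to compare against. Your implications (i)$\Rightarrow$(ii), (ii)$\Rightarrow$(i) and (ii)$\Rightarrow$(iii) are correct; for (ii)$\Rightarrow$(i) you use finitely many distinguishing positions to bound the orbit, then orbit--stabilizer. The genuine gap is (iii)$\Rightarrow$(i). It is not only that the ``extract a period'' step is left at the level of ``I expect'' and ``should''. The mechanism you describe is refuted by a word in this paper.

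\paragraph{The counterexample to your mechanism.} Take the binary word from the proof of Proposition~\ref{prop:sharp-bound} with $r=2$: $x(m,n)=1$ iff $\ell(m,n)=n-\alpha m>\beta$, where $1,\alpha,\beta$ are rationally independent.
\begin{itemize}
\item With $S_1=\{0\}$, every $s\neq 0$ has $\ell(s)\neq 0$. If $\ell(s)>0$, the factor $1$ forces $x(t+s)=1$; if $\ell(s)<0$, the factor $0$ forces $x(t+s)=0$. So for \emph{every} $s$ at most one $S_1$-factor is ambiguous, which is exactly your ``no such $s$'' case.
\item $\func{dir}(1,\alpha)$ is a recurrence direction, with return vectors $q_n=(u_n,v_n)$ given by convergents.
\item $x(t+q_n)=x(t)$ except on a thin strip, where $x(t)$ is the exceptional letter and $\ell(t)$ lies within $|\ell(q_n)|$ of $\beta$. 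This is precisely the ``rigidity outside the exceptional factor'' you anticipate.
\item The exceptional factor is the same for all $q_n$ with $\ell(q_n)$ of a fixed sign. So your pigeonhole over the finitely many exceptional factors removes nothing.
\end{itemize}
Yet this $x$ has no nonzero period. Indeed $p_x^{\ast}(k)\le k+1<2k$ for $k\ge 2$, since the final count in that proof bounds $F$-factors, not just Abelian classes. Consequently your greedy step already fails at $k=1$: no $S_2\supset S_1$ has $p_x(S_2)\ge 4$.

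\paragraph{What is missing.} Your extraction of a period near $\func{dir}(u_N)$ uses only the determination property and returns along $u_N$. The vectors $v_N$ enter only afterwards, to make the two periods independent. So the argument would apply verbatim to the word above and produce a period it does not have.

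The missing idea is a way for the second, transverse recurrence direction to enter the period extraction (or the choice of $s$) itself, so as to exclude exactly this half-plane behaviour, which is what separates $2k$ from $k+1$. Without such a mechanism, the step from $p_x(S_k)\ge 2k$ to $p_x(S_{k+1})\ge 2k+2$ is unsupported.

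One way of using two directions jointly appears in Lemma~\ref{lem:grid-lines} and Proposition~\ref{prop:strong-FA}: sums $u_i+v_j$ of returns give returns avoiding any finite set of prescribed lines. Note, however, that the paper's machinery built on this yields only $(r-1)k+1$ Abelian classes, which is $k+1$ for $r=2$. So reaching $2k$ requires an additional argument that your sketch does not supply.
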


The strong recurrence condition admits a useful geometric description in
terms of recurrence directions. A direction $\theta \in \mathbb{P}^{1}(%
\mathbb{R})$ is called a \emph{recurrence direction} of $x$ if, for any $%
\varepsilon >0$ and $N\geq 1$, there exists $q=q(\varepsilon ,N)\in \mathbb{Z%
}^{2}\setminus \{0\}$ such that 
\begin{equation*}
x[\Sigma _{N}+q]=x[\Sigma _{N}]\text{ and }d_{\pi }(\func{dir}(q),\theta
)<\varepsilon .
\end{equation*}%
We denote by $\mathcal{R}(x)$ the set of all recurrence directions of $x$.
In \cite{KRX2006}, Kamae, Rao and Xue showed that $x$ is strongly recurrent
if and only if $\sharp \mathcal{R}(x)\geq 2.$

\subsection{Abelian maximal pattern complexity}

\ \thinspace

Kamae, Widmer and Zamboni \cite{KWZ2015} introduced an Abelian analog of
maximal pattern complexity for one-dimensional infinite words.

Let $x\in \mathbb{A}^{\mathbb{N}}$. For a $k$-pattern $S\in \Sigma _{k}(%
\mathbb{N}),$ define 
\begin{equation*}
\mathcal{F}_{x}^{\mathrm{ab}}(S):=\mathcal{F}_{x}(S)/{\sim _{\mathrm{ab}},}
\end{equation*}%
and the associated \emph{Abelian pattern complexity} 
\begin{equation*}
p_{x}^{\mathrm{ab}}(S):=\sharp \mathcal{F}_{x}^{\mathrm{ab}}.
\end{equation*}%
The \emph{Abelian maximal pattern complexity} is defined by%
\begin{equation*}
p_{x}^{\ast \mathrm{ab}}(k):=\sup_{S\in \Sigma _{k}(\mathbb{N})}p_{x}^{%
\mathrm{ab}}(S).
\end{equation*}

Then the following Abelian analogue of Theorem \ref{thm:K-R} holds.

\begin{theorem}[\protect\cite{KWZ2015}]
\label{thm:K-W-Z}Let $\#\mathbb{A}=r\geq 2$ and $x\in \mathbb{A}^{\mathbb{N}%
} $. If $x$ is recurrent and aperiodic by projection, then for each positive
integer $k$,%
\begin{equation*}
p_{x}^{\ast ab}(k)\geq (r-1)k+1.
\end{equation*}%
In the case $r=2$, equality always holds. Moreover, for $k=2$ and general $r$%
, there exists $x$ satisfying the equality.
\end{theorem}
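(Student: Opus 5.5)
The proof has three parts. The binary upper bound is trivial. The lower bound is built by an inductive pattern construction. The case $k=2$ needs an explicit example.

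\emph{Upper bound for $r=2$.} When $\#\mathbb{A}=2$, the Abelian class of an $S$-factor with $\#S=k$ is determined by the number of occurrences of one letter. That number lies in $\{0,1,\ldots ,k\}$, so $p_{x}^{\mathrm{ab}}(S)\leq k+1=(r-1)k+1$ for every $S$. Hence equality in the binary case follows once the lower bound is proved.

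\emph{Lower bound.} Identify the Abelian class of $x[n+S]$ with its count vector $c(n,S)\in \mathbb{N}^{\mathbb{A}}$, whose coordinates sum to $k$. I would prove by induction on $k$ the following stronger statement:
\begin{quote}
there are a $k$-pattern $S_{k}$ and shifts realizing at least $(r-1)k+1$ distinct count vectors, among them a vector $v_{k}$ that is extremal for a fixed generic linear functional $\ell $ (say $\ell $ weights the letters $a_{1},\ldots ,a_{r}$ by rapidly decreasing weights).
\end{quote}
For the inductive step, take a shift $n$ realizing $v_{k}$. By recurrence, the finite word $x[n+[0,\max S_{k}]]$ reappears at infinitely many positions $n^{\prime }$, and each such $n^{\prime }$ again realizes $v_{k}$ on $S_{k}$. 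We add one new point $s$ to $S_{k}$, chosen so that the letters $x(n^{\prime }+s)$ range over all $r$ letters as $n^{\prime }$ ranges over these return times. This gives $r$ new vectors $v_{k}+e_{a}$. Meanwhile every old vector $w\neq v_{k}$ extends to at least one vector $w+e_{b}$. Using the extremality of $v_{k}$ for $\ell $, the $r-1$ vectors $v_{k}+e_{a}$ with $a$ different from the $\ell $-smallest letter strictly exceed, in $\ell $, every $w+e_{b}$. So at least $(r-1)(k+1)+1$ classes survive, and the new extremal vector $v_{k+1}$ is again among them.

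\emph{Main obstacle.} The hard step is choosing $s$ so that all $r$ letters really appear at $n^{\prime }+s$ along a family of return times of the same block. This is where aperiodicity by projection is needed. Suppose that for every $s$ large and every recurrent block only a proper subset $B$ of letters occurred at offset $s$. I would then try to derive, via a pigeonhole and compactness (orbit-closure) argument, that some projection $\pi _{B}(x)$ is ultimately periodic, with the period read off from the returns of the block. I expect this to follow the Kamae--Rao argument behind Theorem \ref{thm:K-R}, with recurrence replacing the use of arbitrary windows, since only Abelian information is retained. If this fails in the stated form, a fallback is to weaken the requirement: ask only that at offset $s$ the letters seen along returns realizing $v_{k}$ cover a set $B$ and its complement for a suitable projection. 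Then iterate over a chain of projections, gaining one new extremal class for each letter.

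\emph{Example for $k=2$.} For $k=2$ and general $r$, I would construct $x$ as a recurrent word whose letters are arranged in a Sturmian-like fashion along a rotation. The aim is that for any two positions the pair of letters $\{x(n+s_{1}),x(n+s_{2})\}$ takes at most $2r-1$ unordered values, while every projection stays aperiodic. One candidate is a coding of an irrational rotation by $r$ consecutive intervals. Then verify directly that each $2$-pattern gives exactly $2(r-1)+1$ classes.
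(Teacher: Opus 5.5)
\paragraph{Verdict.} Your binary upper bound is correct. The inductive lower bound has a genuine gap, though, and the $k=2$ example does not attain equality once $r\ge 3$. Note that the paper only cites this theorem; the comparison below is with the \cite{KWZ2015} strategy that the paper adapts in Subsection~\ref{subsec:lower bound}.

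\paragraph{The key step is false for $r\ge 3$.} Let $x$ code an irrational rotation $n\mapsto\{n\alpha+\rho\}$ by $r$ consecutive arcs of length $1/r$. This word is recurrent and aperiodic by projection. Every return time $n'$ of a block beginning with the letter $a$ has $\{n'\alpha+\rho\}$ in the arc of $a$. So for any offset $s$, the letter $x(n'+s)$ is read off a translate of an arc of length $1/r$, which meets at most two coding arcs. Hence no $s$ yields the $r$ vectors $v_k+e_a$, already at the step $k=1\to 2$.

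\paragraph{The counting also fails.} Appending a point sends each old class $w$ to some $w+e_b$. On Parikh vectors this map is not injective: $w+e_b=w'+e_{b'}$ whenever $w-w'=e_{b'}-e_b$. So ``each old vector extends to at least one vector'' does not give $\#W-1$ distinct new classes. The $\ell$-domination claim is false as well. For $r=3$, take weights $100,10,1$ on $a_1,a_2,a_3$ and a realized $w=v_k-e_{a_2}+e_{a_3}$; then $\ell(w+e_{a_1})=\ell(v_k)+91>\ell(v_k)+10=\ell(v_k+e_{a_2})$. This non-injectivity is what separates the Abelian problem from the Kamae--Rao setting.

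\paragraph{The missing idea.} It is also why the argument the paper adapts is not an induction on $k$. One fixes a Ramsey-homogeneous sampling sequence (Lemma~\ref{lem:Kamae-ramsey}). On it, all $j$-subsets with $j\le k+1$ carry the same set of words, and tails can be frozen (property (H$_2$), obtained from return times, i.e.\ from recurrence). Aperiodicity by projection makes every two-point letter graph $\Gamma_{p,q}$ connected (the one-dimensional form of Lemma~\ref{lem:letter-graph}). Hence the graph of pairs $\{a,b\}$ with $a^kb^\infty$ realized is connected (Lemma~\ref{lem:tail-graph}). Homogeneity turns each such edge into the whole segment $V_{a,b}$ of $k+1$ Parikh vectors on the single pattern $\{h_0,\dots,h_{k-1}\}$ (Lemma~\ref{lem:parikh-segment}). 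A spanning tree then gives exactly $(r-1)k+1$ vectors (Lemma~\ref{lem:tree-count}). Only two letters interact at a time, so the rotation obstruction above does no harm. Your fallback of covering a set $B$ and its complement is the germ of the connectivity step, but as written it still feeds into the non-injective count.

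\paragraph{The $k=2$ example.} Your candidate fails for the same geometric reason when $r\ge 3$. Code an irrational rotation by consecutive arcs $I_0,\dots,I_{r-1}$, one per letter, and pick $s\ge 1$ with $\{s\alpha\}$ smaller than every arc length. Then $\{x(n),x(n+s)\}$ takes:
\begin{itemize}
\item the $r$ values $\{a,a\}$;
\item the $r-1$ values $\{a,a+1\}$ with $0\le a\le r-2$;
\item the wrap-around value $\{r-1,0\}$.
\end{itemize}
Thus $p_x^{\ast\mathrm{ab}}(2)\ge 2r>2(r-1)+1$. The wrap-around adjacency is forced by coding with one arc per letter on a circle. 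The paper's two-dimensional sharp example (Proposition~\ref{prop:sharp-bound}) avoids it because $\mathbb{Z}^2$ carries the linear form $(m,n)\mapsto n-\alpha m$ with dense image in $\mathbb{R}$, so its thresholds lie on a line. $\mathbb{Z}$ admits no such form, so a one-dimensional equality example for $k=2$ needs a different construction; the paper takes it from \cite{KWZ2015}.
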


In this paper, we extend the notion of Abelian maximal pattern complexity to
two-dimensional words and study its relation to periodicity.

Let $x\in \mathbb{A}^{\mathbb{Z}^{2}}$. For a $k$-pattern $S\in \Sigma _{k}(%
\mathbb{Z}^{2}),$ define%
\begin{equation*}
\mathcal{F}_{x}^{\mathrm{ab}}(S)=\mathcal{F}_{x}(S)/{\sim _{\mathrm{ab}},}
\end{equation*}%
and the associated \emph{Abelian pattern complexity} 
\begin{equation*}
p_{x}^{\mathrm{ab}}(S)=\sharp \mathcal{F}_{x}^{\mathrm{ab}}.
\end{equation*}%
We define the \emph{Abelian maximal pattern complexity} of $x$ by 
\begin{equation*}
p_{x}^{\ast \mathrm{ab}}(k):=\sup_{S\in \Sigma _{k}(\mathbb{Z}^{2})}p_{x}^{%
\mathrm{ab}}(S).
\end{equation*}

It is clear that, for each positive integer $k$ and each $S\in \Sigma _{k}(%
\mathbb{Z}^{2})$, we have 
\begin{equation*}
p_{x}^{\mathrm{ab}}(S)\leq p_{x}(S)\text{\ and\ }p_{x}^{\ast \mathrm{ab}%
}(k)\leq p_{x}^{\ast }(k).
\end{equation*}

Following the notion of periodicity by projection, we introduce its
two-dimensional counterpart using double periodicity, the notion of
periodicity appearing in the two-dimensional maximal pattern complexity
setting.

For a nonempty subset $B\subsetneq \mathbb{A}$, define 
\begin{equation*}
\pi _{B}(x):=\mathbf{1}_{B}\circ x\in \{0,1\}^{\mathbb{Z}^{2}},\text{\ }%
H_{B}:=\func{Per}(\pi _{B}(x)).
\end{equation*}%
We call $x$ \emph{doubly periodic by projection} if there exists a nonempty
subset $B\subsetneq \mathbb{A}$ such that $\pi _{B}(x)$ is doubly periodic.
Otherwise, $x$ is called \emph{non-doubly periodic by projection}.

If $H_{B}$ has rank one, then all its nonzero elements have one unoriented
direction, denoted by $\phi _{H_{B}}\in \mathbb{P}^{1}(\mathbb{R})$. Let 
\begin{equation*}
D_{\mathrm{bad}}(x):=\{\phi _{H_{B}}:\varnothing \neq B\subsetneq A,\text{\ }%
H_{B}\neq \{0\}\}.
\end{equation*}%
Thus $D_{\mathrm{bad}}(x)$ is finite whenever $x$ is non-doubly periodic by
projection. A recurrence direction in $\mathcal{R}(x)\setminus D_{\mathrm{bad%
}}(x)$ is called \emph{transverse}.

We are now ready to state our main results. The first gives a
two-dimensional analogue of Theorem \ref{thm:K-W-Z} under different
recurrence assumptions. The second concerns the binary case.

\begin{theorem}
\label{thm:main}Let $\#\mathbb{A}=r\geq 2$ and $x\in \mathbb{A}^{\mathbb{Z}%
^{2}}$\ be non-doubly periodic by projection.\ Suppose that either

\begin{enumerate}
\item[(i)] $x$ admits a transverse recurrence direction, or

\item[(ii)] $x$ is strongly recurrent.
\end{enumerate}

Then for each positive integer $k$, we have 
\begin{equation*}
p_{x}^{\ast \mathrm{ab}}(k)\geq (r-1)k+1.
\end{equation*}
\end{theorem}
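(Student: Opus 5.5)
The plan is to reduce to the one-dimensional lower bound of \cref{thm:K-W-Z}. We restrict $x$ to a suitable family of lattice lines parallel to a recurrence direction, obtaining one-dimensional words that are recurrent and aperiodic by projection. Patterns $S\subset\mathbb{Z}^{2}$ contained in a single line $\mathbb{Z}q$ with $q\in\mathbb{Z}^{2}\setminus\{0\}$ primitive correspond to patterns in $\mathbb{Z}$. Moreover, every $S$-factor of $x$ is an $S$-factor of the bi-infinite word $y_{t}(n):=x(t+nq)$ for some $t$. Hence $p_{x}^{\ast\mathrm{ab}}(k)\ge p^{\ast\mathrm{ab}}_{y_t}(k)$ for each single line, and more generally $p_{x}^{\mathrm{ab}}(S)$ dominates the union over $t$ of the Abelian classes along the lines.

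Since the recurrence direction $\theta$ need not be rational, we will not restrict to exact lines. Instead we imitate the proof of \cref{thm:K-W-Z} directly in $\mathbb{Z}^{2}$. That argument builds, by induction on $k$, a pattern $S_k$ with $p^{\mathrm{ab}}(S_k)\ge (r-1)k+1$. At each step it adds a new point $s$ far away so that two things hold. First, the Abelian vectors already realized on $S_{k}$ persist on $S_k\cup\{s\}$ with the letter at $s$ controlled, which uses recurrence. Second, for each of the $r-1$ ``new'' letter counts the value at $s$ can be varied independently, which uses aperiodicity by projection. Concretely, we keep a finite set $T_k$ of translates realizing $(r-1)k+1$ distinct Abelian classes on $S_k$, all inside a box $\Sigma_N$.

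For the inductive step we take return vectors $q$ with $x[\Sigma_N+q]=x[\Sigma_N]$ close to $\theta$ (case (i)), or along either of two separated directions (case (ii)). We then seek a position $s$ such that the letters $x(t+s)$, $t\in T_k$, together with a few extra translates, produce $r-1$ new Abelian classes. This amounts to showing that for each nonempty $B\subsetneq\mathbb{A}$ the configuration $\pi_B(x)$ cannot be ``constant along all large return translates'' in the required way. Otherwise we would get either two independent periods of $\pi_B(x)$, contradicting non-double periodicity by projection, or a single period direction equal to $\theta$, contradicting transversality. In case (ii) the two separated recurrence directions ensure that at least one of them avoids each $\phi_{H_B}$. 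More precisely, a rank-one $H_B$ has one direction, so at most one of the two recurrence directions can coincide with it, and we can choose the direction $B$ by $B$. Alternatively, since $\mathcal{R}(x)$ contains at least two directions while only $\phi_{H_B}$ is bad for the relevant $B$, we can treat each letter set separately.

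The main obstacle is the combinatorial step converting ``$\pi_B(x)$ not doubly periodic and not periodic in direction $\theta$'' into the existence of a point $s$ where the relevant translates disagree. This replaces the one-dimensional fact that an aperiodic word has, for any finite set of shifts, a position distinguishing them. I would first try a compactness argument. If for every far $s$ the values $\pi_B(x)(t+s)$ agreed across the chosen translates $t\in T_k$, then the differences $t-t'$ would be periods of $\pi_B(x)$ on arbitrarily large regions. Passing to limits along return vectors approaching $\theta$ would then force a genuine period of $\pi_B(x)$ that is either transverse to another period or parallel to $\theta$, giving a contradiction. Counting then yields $(r-1)$ new classes per added point, starting from $1$ class for $k=0$, hence $p_{x}^{\ast\mathrm{ab}}(k)\ge (r-1)k+1$.
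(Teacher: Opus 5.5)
There is a genuine gap. The step that carries all the content, producing $r-1$ new Abelian classes at each added point, is left open, and the mechanism you sketch for it aims at the wrong contradiction.

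\textbf{The wrong contradiction.} Learning that some difference $t-t'$ is a period of $\pi_B(x)$ contradicts nothing. $H_B$ is allowed to have rank one in any direction, and nothing forces that period to be parallel to $\theta$. What must be arranged is the opposite. Each new sample point $g_{n+1}=g_n+q_n$ must satisfy two conditions at once:
\begin{enumerate}
\item[(a)] $q_n$ is a return vector for a prescribed finite set $E_n$, so previously realized configurations are copied; this gives the tail property (H$_2$);
\item[(b)] $g_{n+1}-g_i\notin H_B$ for all $i\le n$ and \emph{all} $B$ simultaneously, i.e.\ $q_n$ avoids the finitely many cosets $(g_i-g_n)+H_B$.
\end{enumerate}
This is the paper's condition (FA). Under (i) it holds because a coset of a rank-one $H_B$ lies on a line of direction $\phi_{H_B}\neq\theta$. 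Hence it meets the returns in a small cone around $\theta$ in only finitely many points.

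\textbf{Case (ii).} Your plan to choose the direction $B$ by $B$ fails here, because one point must serve every $B$ at once, and both recurrence directions may be bad. In Proposition~\ref{prop:product-example}, the vertical and horizontal directions are $\phi_{H_B}$ for $B=\{(0,0),(0,1)\}$ and $B=\{(0,0),(1,0)\}$, respectively. Returns in a cone around a bad direction give no control over cosets of the corresponding $H_B$. The paper's remedy is a grid:
\begin{enumerate}
\item[(1)] take $E$-returns $u_1,\dots,u_L$ with pairwise differences near $\theta$;
\item[(2)] take returns $v_1,\dots,v_L$ for the enlarged set $\bigcup_i(E+u_i)$, with pairwise differences near $\eta$;
\item[(3)] the $L^2$ sums $u_i+v_j$ are then distinct $E$-returns with at most $L$ on any affine line, so $L=M+1$ beats $M$ lines.
\end{enumerate}

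\textbf{The count.} Even granting good points, your per-step count has no invariant behind it. The classes on $S_k\cup\{s\}$ are $\func{Par}(x[t+S_k])+e_{x(t+s)}$. These collide whenever two old vectors differ by some $e_b-e_a$ and the letters at $s$ are $a$ and $b$ respectively. To gain $r-1$ classes you must control the letter at $s$ jointly with the whole word on $S_k$, for instance realizing $a^k$ on $S_k$ together with $b$ at $s$ along the edges of a spanning tree. Nothing in your sketch provides this. This is also not how the proof of Theorem~\ref{thm:K-W-Z} proceeds; there is no induction on $k$.

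\textbf{What the paper does instead.} It fixes $k$ and argues as follows.
\begin{enumerate}
\item[(1)] Build the whole sequence $(g_n)$ with (H$_1$) and (H$_2$).
\item[(2)] Use Lemma~\ref{lem:Kamae-ramsey} to make $\Omega[P]$ depend only on $|P|\le k+1$.
\item[(3)] Show each letter graph $\Gamma_{p,q}$ is connected, since a component $B$ would force $h_q-h_p\in H_B$.
\item[(4)] Convert this, by pigeonhole on a prefix of length $kr$ together with Ramsey, into a connected graph whose edges satisfy $a^kb^\infty\in\Omega$.
\item[(5)] Each tree edge then contributes the whole segment $V_{a,b}$, giving $(r-1)k+1$ classes on $\{h_0,\dots,h_{k-1}\}$.
\end{enumerate}

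Your idea of moving sample points by return vectors is the right raw ingredient. What is missing is the simultaneous coset avoidance (FA) and the Ramsey/graph structure that actually produce the bound.
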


\begin{remark}
Non-doubly periodicity by projection in Theorem~\ref{thm:main} cannot be
weakened to non-double periodicity, even under strong recurrence. Indeed,
Example~\ref{ex:projection-period} gives a strongly recurrent, non-doubly
periodic two-dimensional word $x$ over a three-letter alphabet which is
doubly periodic by projection. Moreover, for every $k\geq 2$,%
\begin{equation*}
p_{x}^{\ast \mathrm{ab}}(k)=k+2<2k+1=(|\mathbb{A}|-1)k+1.
\end{equation*}
\end{remark}

\begin{theorem}
\label{thm:intro-binary-dichotomy} Let $x\in \{0,1\}^{\mathbb{Z}^{2}}$ be
strongly recurrent. Then $x$ is doubly periodic if and only if $\sup_{k\geq
1}p_{x}^{\ast \mathrm{ab}}(k)<\infty .$
\end{theorem}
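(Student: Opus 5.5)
The forward direction follows from results already quoted in the introduction. For the converse, we argue by contraposition: for a binary alphabet, "not doubly periodic" coincides with "non-doubly periodic by projection", so \cref{thm:main} applies directly.

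\emph{Doubly periodic implies bounded.} Suppose $x$ is doubly periodic. By the theorem of Kamae, Rao and Xue \cite{KRX2006} quoted above (item (i) implies item (ii)), $p_{x}^{\ast }(k)$ is uniformly bounded in $k$. Since $p_{x}^{\ast \mathrm{ab}}(k)\leq p_{x}^{\ast }(k)$ for every $k$, we obtain $\sup_{k\geq 1}p_{x}^{\ast \mathrm{ab}}(k)<\infty$. Strong recurrence is not needed here.

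\emph{Not doubly periodic implies unbounded.} Suppose $x\in\{0,1\}^{\mathbb{Z}^{2}}$ is strongly recurrent and not doubly periodic. We check that $x$ is non-doubly periodic by projection. With $\mathbb{A}=\{0,1\}$, the only nonempty proper subsets are $B=\{0\}$ and $B=\{1\}$.
\begin{itemize}
\item We have $\pi_{\{1\}}(x)=x$.
\item We have $\pi_{\{0\}}(x)=1-x$, the letter-wise complement of $x$.
\end{itemize}
Complementation does not change the period group, so $H_{\{0\}}=H_{\{1\}}=\func{Per}(x)$. Since $\func{rank}\func{Per}(x)\leq 1$, neither projection is doubly periodic.

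Hence $x$ satisfies the hypotheses of \cref{thm:main} with $r=2$, using alternative (ii), strong recurrence. It gives $p_{x}^{\ast \mathrm{ab}}(k)\geq k+1$ for every $k\geq 1$, so $\sup_{k}p_{x}^{\ast \mathrm{ab}}(k)=\infty$. This proves the contrapositive.

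No step here is a genuine obstacle. All the substance lies in \cref{thm:main}(ii), which we take as given. The only point to verify is the identification of the period groups of $\pi_{\{0\}}(x)$ and $\pi_{\{1\}}(x)$ with $\func{Per}(x)$, and that is immediate.
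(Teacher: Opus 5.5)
Your proof is correct and follows the paper's argument. The converse direction is exactly the paper's reduction: the identity $H_{\{0\}}=H_{\{1\}}=\func{Per}(x)$ makes $x$ non-doubly periodic by projection, and then \cref{thm:main}(ii) applies. The only difference is in the forward direction, where you cite the Kamae--Rao--Xue theorem together with $p_{x}^{\ast\mathrm{ab}}(k)\le p_{x}^{\ast}(k)$, whereas the paper proves directly in \cref{prop:periodic-upper} that $t\mapsto \func{Par}_{F}(T^{t}x)$ is constant on cosets of $\func{Per}(x)$, giving the explicit bound $[\mathbb{Z}^{2}:\func{Per}(x)]$; both routes are valid.
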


For a more precise quantitative version of Theorem \ref%
{thm:intro-binary-dichotomy}, see Theorem \ref{thm:binary-dichotomy} in
Subsection \ref{subsec:main}.

The sharpness of the lower bound in Theorem \ref{thm:K-W-Z} remains open in
general \cite{KWZ2015}. In dimension two, however, the lower bound in
Theorem \ref{thm:main} is attained for every alphabet size. More precisely,
we have the following proposition.

\begin{proposition}
\label{prop:sharp-bound}For every integer $r\geq 2,$ there exists $x\in 
\mathbb{A}^{\mathbb{Z}^{2}}$ with $\mathbb{A}=\{0,1,\ldots ,r-1\}$ such that%
\begin{equation}
H_{B}=\{0\}\text{ for all }\varnothing \neq B\subsetneq \mathbb{A},
\label{eq:H_B-0}
\end{equation}%
$x$ admits a transverse recurrence direction, and%
\begin{equation*}
p_{x}^{\ast \mathrm{ab}}(k)=(r-1)k+1\text{\ for all }k\geq 1.
\end{equation*}%
In particular, $x$ is non-doubly periodic by projection. 
\end{proposition}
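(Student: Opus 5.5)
The plan is to take $x$ to be a two-dimensional rotation coding. Fix $\theta=(\alpha,\beta)\in\mathbb{R}^{2}$ with $1,\alpha,\beta$ linearly independent over $\mathbb{Q}$. Split the circle $\mathbb{T}=\mathbb{R}/\mathbb{Z}$ into $r$ half-open arcs $I_{0},\ldots ,I_{r-1}$, with lengths and positions still to be tuned. Then set $x(t)=j$ if and only if $\{t\cdot \theta \}\in I_{j}$.

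\textbf{Step 1: the structural properties.}
\begin{itemize}
\item \emph{Condition \eqref{eq:H_B-0}.} For every $\varnothing \neq B\subsetneq \mathbb{A}$, $\pi _{B}(x)$ codes a nontrivial finite union of arcs. A period $p\neq 0$ of $\pi _{B}(x)$ would force that union to be invariant under rotation by $p\cdot \theta \notin \mathbb{Z}$. Since $\{t\cdot\theta\}$ is dense, this yields a nontrivial union of finitely many arcs invariant under an irrational rotation, which is impossible. So $H_{B}=\{0\}$.
\item \emph{Transverse recurrence direction.} Because $D_{\mathrm{bad}}(x)=\varnothing$, every recurrence direction is transverse. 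Given $N$ and $\varepsilon$, choose $q\in \mathbb{Z}^{2}$ in a narrow cone around a fixed direction with $\|q\cdot\theta\|$ very small. Such $q$ exist by Dirichlet/Kronecker, using the independence of $1,\alpha,\beta$. Away from the measure-zero set of points $z$ near arc endpoints, this gives $x[\Sigma _{N}+q]=x[\Sigma _{N}]$. If the origin is bad, I will replace $x$ by a generic translate: a shift by $\rho$ in the coding, chosen so that no $t\cdot\theta+\rho$ lies on an endpoint.
\item \emph{Lower bound.} Once the first two properties hold, Theorem~\ref{thm:main}(i) gives $p_{x}^{\ast \mathrm{ab}}(k)\geq (r-1)k+1$.
\end{itemize}

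\textbf{Step 2: the upper bound $p_{x}^{\mathrm{ab}}(S)\le (r-1)k+1$ for every $S\in\Sigma_k(\mathbb{Z}^2)$.} For $S=\{s_{1},\ldots ,s_{k}\}$, the Abelian class of $x[t+S]$ depends only on $z=\{t\cdot \theta \}$ via the vector
\[
N(z)=\bigl(\#\{i:\ z+s_{i}\cdot \theta \in I_{j}\}\bigr)_{j}.
\]
This vector is piecewise constant on $\mathbb{T}$. As $z$ moves once around the circle, each point $z+s_{i}\cdot \theta$ crosses every endpoint once. Each crossing changes $N$ by $e_{j}-e_{j-1}$, where letter $j-1$ precedes $j$ in cyclic order.

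For the arc design I will use nested thresholds $0=a_{0}<a_{1}<\cdots <a_{r-1}<a_r=1$ with letter $j$ on $[a_{j},a_{j+1})$. The idea is to encode $N(z)$ by the cumulative counts
\[
C_{j}(z)=\#\{i:\ \{z+s_{i}\cdot\theta\}<a_{j}\},\qquad 1\le j\le r-1 .
\]
The goal is to show that these are jointly monotone along a suitable parametrization. For the rotation, each $C_{j}$ takes at most two consecutive values for each fixed $S$ (the balance property of Sturmian codings). I then want to choose the lengths $a_{j+1}-a_j$, for instance commensurable with a common irrational step, so that the jumps of $C_{1},\ldots ,C_{r-1}$ are interleaved. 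The aim is a chain in which the distinct values of $(C_{1},\ldots ,C_{r-1})$ arise by changing one coordinate at a time, each coordinate moving through at most $k+1$ consecutive values in a single monotone sweep. Such a chain contains at most $(r-1)k+1$ points.

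\textbf{Main obstacle.} Step 2 is where I expect the difficulty. A naive count over the $rk$ cut points gives up to $rk$ regions, hence up to $rk$ classes, which is too many. The specific choice of arcs must make the realized vectors fall on a monotone lattice path, so that the count saves exactly $k-1$. If the interleaving cannot be forced for arbitrary $S$, I would instead try a coding in which one letter's arc is much longer than the others. I would then show, by a direct sweep argument, that for every $S$ the map $z\mapsto N(z)$ never revisits a coordinate direction after leaving it.
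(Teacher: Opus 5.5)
Step~2 has a genuine gap, and it cannot be repaired within your construction. For $r\ge 3$, a rotation coding with letters $0,1,\ldots,r-1$ on consecutive arcs of $\mathbb{T}$ has $p_x^{\ast\mathrm{ab}}(k)\ge rk$, whatever the arc lengths. To see this, take $\delta<\min_j(a_{j+1}-a_j)$. By density of $\{s\cdot\theta\}$, choose distinct $s_1,\ldots,s_k\in\mathbb{Z}^2$ with $\sigma_i=\{s_i\cdot\theta\}\in(0,\delta)$. As $w=\{t\cdot\theta+\rho\}$ passes a threshold $a_j$, the tight cluster $w+\sigma_1,\ldots,w+\sigma_k$ crosses it one point at a time. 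This includes $a_0\equiv a_r$, where letter $r-1$ is followed by letter $0$. Each open $w$-interval between consecutive crossings is hit by the dense orbit. Hence every vector $me_{j-1}+(k-m)e_j$ with $0\le m\le k$ and $j\in\mathbb{Z}/r\mathbb{Z}$ is realized. This is the whole boundary cycle of the simplex, i.e.\ $rk$ distinct Parikh vectors, which exceeds $(r-1)k+1$ for $k\ge 2$. Lengthening one arc does not help, since only $\delta<\min_j(a_{j+1}-a_j)$ was used.

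The same cluster shows that each $C_j$ takes every value $0,\ldots,k$. The balance property controls counts over contiguous windows, not over arbitrary patterns. The joint monotonicity you are aiming for cannot exist on a compact circle: every point that enters $[0,a_j)$ must leave it again. The closing transition $r-1\to 0$ is exactly where the surplus $k-1$ classes come from. For $r=2$ your construction works, but only because of the trivial binary bound.

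The missing idea is a non-compact parameter. The paper does not reduce modulo $1$. With $\ell(m,n)=n-\alpha m$ and $1,\alpha,\beta$ rationally independent, it sets $x(z)=\#\{j\in\{1,\ldots,r-1\}:\ell(z)>j\beta\}$. The letters then occupy $r$ parallel strips, the two outer ones being half-planes. For $\#F=k$, the $F$-factor at $t$ depends only on where $\ell(t)$ lies relative to the $(r-1)k$ points $j\beta-\ell(f)$. These cut $\mathbb{R}$ into at most $(r-1)k+1$ intervals. Every cumulative count is now nondecreasing in the single real variable $\ell(t)$, and the bound holds even for the non-Abelian $p_x(F)$.

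Your Step~1 carries over to this construction. A nonzero $q\in H_B$ would make the step function $u\mapsto\mathbf{1}_B(\#\{j:u>j\beta\})$ invariant under translation by $\ell(q)\neq 0$ on the dense set $\ell(\mathbb{Z}^2)$. That is impossible for a finite nonempty set of jumps. Also, $\func{dir}(1,\alpha)$ is a recurrence direction via the convergents $v_n/u_n$ of $\alpha$. Indeed, $|\ell(q)|$ smaller than the distance from $\ell(\Sigma_N)$ to the thresholds forces $x[\Sigma_N+q]=x[\Sigma_N]$. Since $D_{\mathrm{bad}}(x)=\varnothing$, Theorem~\ref{thm:main}(i) then supplies the matching lower bound, as you planned.
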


The paper is organized as follows. Section \ref{sec:prelim} provides some
notations and gives some preliminary results. In Section \ref{sec:avoid}, we
introduce the finite avoidance condition (FA) and show that each of the two
conditions in Theorem \ref{thm:main} implies (FA). Section \ref{sec:sampling}
develops the combinatorial tools needed for the proof. In Section \ref%
{subsec:lower bound}, we derive the lower bound for the Abelian maximal
pattern complexity from (FA). Section \ref{subsec:main} proves the main
theorems, and establishes the sharpness of the lower bound. Finally, Section %
\ref{sec:ex} gives some examples.

\bigskip

\section{Preliminaries}

\label{sec:prelim}

For each $t\in \mathbb{Z}^{2}$, define the translation map $T^{t}:\mathbb{A}%
^{\mathbb{Z}^{2}}\longrightarrow \mathbb{A}^{\mathbb{Z}^{2}}$ by 
\begin{equation*}
(T^{t}y)(u):=y(u+t),\text{\ }y\in \mathbb{A}^{\mathbb{Z}^{2}},\text{\ }u\in 
\mathbb{Z}^{2}.
\end{equation*}%
Then 
\begin{equation*}
T^{0}=\func{id},\text{\ }T^{s+t}=T^{s}\circ T^{t}\text{\ }(s,t\in \mathbb{Z}%
^{2}).
\end{equation*}%
Hence the family $\{T^{t}\}_{t\in \mathbb{Z}^{2}}$ defines a shift action of 
$\mathbb{Z}^{2}$ on $\mathbb{A}^{\mathbb{Z}^{2}}$.

We equip $\mathbb{A}$ with the discrete topology and $\mathbb{A}^{\mathbb{Z}%
^{2}}$ with the product topology. For $x\in \mathbb{A}^{\mathbb{Z}^{2}},$
let 
\begin{equation*}
X_{x}:=\overline{\{T^{t}x:t\in \mathbb{Z}^{2}\}}\subset \mathbb{A}^{\mathbb{Z%
}^{2}}
\end{equation*}%
be the orbit closure in the product topology. Since $\mathbb{A}$ is finite, $%
X_{x}$ is compact. For every nontrivial $B\subsetneq \mathbb{A}$, we have $%
\func{Per}(x)\subset H_{B}$ and $H_{B}=H_{\mathbb{A}\setminus B}$.

We shall use the following immediate consequence of the definition of the
orbit closure.

\begin{lemma}
\label{lem:realize} Let $y\in X_{x}$ and let $F\subset \mathbb{Z}^{2}$ be
finite. Then there exists $t\in \mathbb{Z}^{2}$ such that 
\begin{equation*}
(T^{t}x)(f)=y(f)\text{ for every }f\in F.
\end{equation*}
\end{lemma}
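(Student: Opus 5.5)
The plan is to argue directly from the definition of the orbit closure $X_{x}=\overline{\{T^{t}x:t\in \mathbb{Z}^{2}\}}$ and the product topology on $\mathbb{A}^{\mathbb{Z}^{2}}$. In that topology a basic open neighbourhood of $y$ is a cylinder set
\begin{equation*}
U_{F}(y):=\{z\in \mathbb{A}^{\mathbb{Z}^{2}}:z(f)=y(f)\text{ for all }f\in F\},
\end{equation*}%
where $F\subset \mathbb{Z}^{2}$ is finite. Since $\mathbb{A}$ carries the discrete topology, the sets $\{z:z(f)=a\}$ with $f\in \mathbb{Z}^{2}$ and $a\in \mathbb{A}$ form a subbasis. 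Finite intersections of them are exactly these cylinders, so each $U_{F}(y)$ is open and contains $y$.

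The argument then takes two steps. First, fix $y\in X_{x}$ and a finite $F$, and note that $U_{F}(y)$ is an open neighbourhood of $y$. Second, because $y$ lies in the closure of the orbit $\{T^{t}x\}$, every open neighbourhood of $y$ meets the orbit. So there is $t\in \mathbb{Z}^{2}$ with $T^{t}x\in U_{F}(y)$, which unwinds to $(T^{t}x)(f)=y(f)$ for all $f\in F$. That is exactly the claim.

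There is no genuine obstacle; the statement is a routine unpacking of definitions. The only point to verify is that the cylinder is open, which uses that $F$ is finite. This is where finiteness of $F$ enters: for infinite $F$ the cylinder is not open and the conclusion can fail. No sequential compactness or metrizability argument is needed, although one could equally use the standard metric on $\mathbb{A}^{\mathbb{Z}^{2}}$ and pick $T^{t}x$ close enough to $y$ that they agree on a box containing $F$.
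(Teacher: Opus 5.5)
Your proof is correct and matches the paper's argument: the cylinder determined by $y$ on the finite set $F$ is an open neighbourhood of $y$, and since $y$ lies in the closure of the orbit, some translate $T^{t}x$ lies in that cylinder. The only cosmetic difference is that the paper takes the cylinder relative to $X_{x}$ rather than in all of $\mathbb{A}^{\mathbb{Z}^{2}}$, which changes nothing.
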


\begin{proof}
The cylinder 
\begin{equation*}
\mathcal{U}_{F}(y):=\{z\in X_{x}:z|_{F}=y|_{F}\}
\end{equation*}%
is a nonempty open neighbourhood of $y$. Since the orbit $\{T^{t}x:t\in 
\mathbb{Z}^{2}\}$ is dense in $X_{x}$, the cylinder contains an orbit point $%
T^{t}x$.
\end{proof}

For a finite word $u\in \mathbb{A}^{m}$, we denote its \emph{Parikh vector}
or \emph{abelianization} by 
\begin{equation*}
\func{Par}(u):=(|u|_{a})_{a\in \mathbb{A}}\in \mathbb{N}^{\mathbb{A}}.
\end{equation*}%
Then 
\begin{equation*}
u\sim _{\mathrm{ab}}v\text{ if and only if }\func{Par}(u)=\func{Par}(v).
\end{equation*}%
Accordingly, we identify an Abelian equivalence class with its Parikh
vector. For $\mathcal{U}\subset \mathbb{A}^{m}$, set $\func{Par}(\mathcal{U}%
)=\{\func{Par}(u):u\in \mathcal{U}\}$.

If $\mathbb{A}=\{0,1\}$ and $u\in \mathbb{A}^{k}$, then 
\begin{equation*}
\func{Par}(u)=(k-j,j)
\end{equation*}%
for some $j\in \{0,1,\ldots ,k\}$. Hence there are at most $k+1$ Abelian
equivalence classes of words of length $k$. Therefore, for $x\in \mathbb{A}^{%
\mathbb{Z}^{2}}$, we have 
\begin{equation}
p_{x}^{\ast \mathrm{ab}}(k)\leq k+1.  \label{eq:binary-upper}
\end{equation}

We recall the equivalent characterizations of strong recurrence of
two-dimensional words.

\begin{lemma}[\protect\cite{KRX2006}]
\label{lem:strong-two-directions} The word $x\in \mathbb{A}^{\mathbb{Z}^{2}}$
is strongly recurrent if and only if $\mathcal{R}(x)$ contains at least two
distinct recurrence directions.
\end{lemma}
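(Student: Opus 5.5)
The lemma is quoted from \cite{KRX2006}, but it follows directly from the definitions. I would prove the two implications separately. The only tools are compactness of $\mathbb{P}^{1}(\mathbb{R})$ with the metric $d_{\pi}$, the triangle inequality for $d_{\pi}$, and the fact that $\Sigma_{N}\subset\Sigma_{M}$ whenever $N\le M$. The last fact gives the implication
\begin{equation*}
x[\Sigma_{M}+q]=x[\Sigma_{M}]\ \Longrightarrow\ x[\Sigma_{N}+q]=x[\Sigma_{N}].
\end{equation*}

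\emph{Strong recurrence implies two recurrence directions.}
\begin{enumerate}
\item Let $\delta>0$ and $u_{N},v_{N}$ be as in the definition of strong recurrence.
\item Since $\mathbb{P}^{1}(\mathbb{R})\cong\mathbb{R}/\pi\mathbb{Z}$ is compact, pass to a subsequence $N_{j}\to\infty$ along which $\func{dir}(u_{N_{j}})\to\theta_{1}$ and $\func{dir}(v_{N_{j}})\to\theta_{2}$.
\item Continuity of $d_{\pi}$ gives $d_{\pi}(\theta_{1},\theta_{2})\ge\delta>0$, so $\theta_{1}\neq\theta_{2}$.
\item To see that $\theta_{1}\in\mathcal{R}(x)$, fix $\varepsilon>0$ and $N\ge1$. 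Choose $j$ with $N_{j}\ge N$ and $d_{\pi}(\func{dir}(u_{N_{j}}),\theta_{1})<\varepsilon$.
\item The displayed implication gives $x[\Sigma_{N}+u_{N_{j}}]=x[\Sigma_{N}]$, so $q=u_{N_{j}}$ witnesses the definition for $(\varepsilon,N)$.
\item The same argument applies to $\theta_{2}$, using $v_{N_{j}}$.
\end{enumerate}

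\emph{Two recurrence directions imply strong recurrence.}
\begin{enumerate}
\item Suppose $\theta_{1}\neq\theta_{2}$ both lie in $\mathcal{R}(x)$, and put $\eta:=d_{\pi}(\theta_{1},\theta_{2})>0$. Set $\delta:=\eta/2$.
\item For each $N$, apply the definition of recurrence direction with $\varepsilon=\eta/4$. This gives nonzero $u_{N},v_{N}$ with $x[\Sigma_{N}+u_{N}]=x[\Sigma_{N}]=x[\Sigma_{N}+v_{N}]$, and $\func{dir}(u_{N})$, $\func{dir}(v_{N})$ within $\eta/4$ of $\theta_{1}$, $\theta_{2}$ respectively.
\item The triangle inequality for $d_{\pi}$ yields
\begin{equation*}
d_{\pi}(\func{dir}(u_{N}),\func{dir}(v_{N}))\ge\eta-\eta/4-\eta/4=\delta .
\end{equation*}
\item Since $\delta$ does not depend on $N$, this is exactly strong recurrence.
\end{enumerate}

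There is no serious obstacle. The two points needing care are the limit extraction in the first direction and the monotonicity in $N$ of the pattern condition, which is why a single subsequence serves all $N$. One should also confirm that $d_{\pi}$ is a genuine metric on $\mathbb{R}/\pi\mathbb{Z}$, so that both compactness and the triangle inequality hold; it is the standard quotient metric, so this is routine.
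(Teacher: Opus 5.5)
Your proof is correct. The paper does not prove this lemma; it simply imports it from \cite{KRX2006}, so your argument is a self-contained alternative rather than a reconstruction of the paper's reasoning.

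For the forward implication you use compactness of $\mathbb{P}^{1}(\mathbb{R})$ to extract one common subsequence along which the directions of $u_{N}$ and $v_{N}$ both converge. The monotonicity $x[\Sigma_{M}+q]=x[\Sigma_{M}]\Rightarrow x[\Sigma_{N}+q]=x[\Sigma_{N}]$ for $N\le M$ then shows that both limits are recurrence directions, and they are at distance at least $\delta$. For the converse you choose returns within $\eta/4$ of each direction and apply the triangle inequality for $d_{\pi}$.

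What your version buys is a check of the equivalence with exactly the definitions stated in this paper, which may be phrased differently in \cite{KRX2006}. It also shows the lemma is elementary.

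The paper only ever uses the forward implication, in Proposition~\ref{prop:strong-FA}, to pick two distinct recurrence directions. That implication is precisely your compactness step.
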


For $\Omega \subset \mathbb{A}^{\mathbb{N}}$ and finite $S=\{s_{1}<\cdots
<s_{p}\}\subset \mathbb{N}$, define 
\begin{equation*}
\Omega \lbrack S]:=\{(\omega (s_{1}),\ldots ,\omega (s_{p})):\omega \in
\Omega \}\subset \mathbb{A}^{p}.
\end{equation*}%
If $J=\{j_{0}<j_{1}<\cdots \}\subset \mathbb{N}$ is infinite, define 
\begin{equation*}
\Omega \lbrack J]:=\{(\omega (j_{n}))_{n\geq 0}:\omega \in \Omega \}\subset 
\mathbb{A}^{\mathbb{N}}.
\end{equation*}

We shall use the following Ramsey-type lemma.

\begin{lemma}[\protect\cite{KWZ2015}]
\label{lem:Kamae-ramsey} Let $\Omega \subset \mathbb{A}^{\mathbb{N}}$ be
nonempty and let $I\subset \mathbb{N}$ be an\ infinite set. Then, for every
positive integer $m$, there exists an infinite subset $J\subset I$ such that 
\begin{equation*}
\Omega \lbrack P]=\Omega \lbrack Q]
\end{equation*}%
whenever $P,Q\subset J$ are finite and $1\leq |P|=|Q|\leq m.$
\end{lemma}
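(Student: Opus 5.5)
The plan is to derive the statement directly from the classical infinite Ramsey theorem, applied once for each pattern size $p\in\{1,\ldots,m\}$ and nesting the resulting infinite sets. No property of $\Omega$ beyond nonemptiness will be needed; even nonemptiness only matters in that the common value $\Omega[P]$ is then a nonempty set.

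\emph{Step 1: a finite colouring.} Fix $p$ with $1\leq p\leq m$. To each $p$-element subset $P=\{s_1<\cdots<s_p\}$ of $I$, assign the colour
\begin{equation*}
c_p(P):=\Omega[P]\subset \mathbb{A}^{p}.
\end{equation*}
This is well defined because $\Omega[P]$ is formed using the increasing enumeration of $P$. The colour takes values in the power set of $\mathbb{A}^{p}$, which has at most $2^{(\#\mathbb{A})^{p}}$ elements. So $c_p$ is a finite colouring of the $p$-subsets of $I$.

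\emph{Step 2: iterate Ramsey's theorem.} Apply the infinite Ramsey theorem to $c_1$ on $I$. This gives an infinite $J_1\subset I$ all of whose $1$-subsets receive the same colour. Next apply it to $c_2$ restricted to the $2$-subsets of $J_1$, giving an infinite $J_2\subset J_1$ on which $c_2$ is constant. Continue in this way to obtain infinite sets
\begin{equation*}
I\supset J_1\supset J_2\supset\cdots\supset J_m,
\end{equation*}
where for each $p$ the colouring $c_p$ is constant on the $p$-subsets of $J_p$.

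\emph{Step 3: conclusion.} Set $J:=J_m$. Let $P,Q\subset J$ be finite with $1\leq |P|=|Q|=p\leq m$. Since $J\subset J_p$, both $P$ and $Q$ are $p$-subsets of $J_p$. Hence $c_p(P)=c_p(Q)$, that is, $\Omega[P]=\Omega[Q]$.

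There is no real obstacle in this argument. The only point requiring care is that the number of colours is finite, which holds because $\mathbb{A}$ is finite and $p\leq m$ is bounded. This finiteness is also why the lemma is stated for a fixed $m$ rather than for all sizes at once. A diagonal argument could in principle handle all sizes simultaneously, but it is not needed here.
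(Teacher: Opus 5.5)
Your argument is correct. The paper imports this lemma from \cite{KWZ2015} without proof, and your argument is exactly the standard proof: colour each $p$-subset $P\subset I$ by $\Omega[P]\subset\mathbb{A}^{p}$ (finitely many colours), apply the infinite Ramsey theorem successively for $p=1,\ldots,m$, and nest the homogeneous sets.

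One correction to your closing aside: a diagonal argument cannot give homogeneity for all sizes at once in general. Take $\Omega$ to be the set of binary sequences whose number of $1$'s is at most the index of their first $1$, together with $0^{\infty}$. Then $1^{p}\in\Omega[P]$ iff $p\leq\min P$, so no single infinite $J$ works for every $m$. That is precisely why $m$ must be fixed.
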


For a finite $E\subset \mathbb{Z}^{2}$, a vector $q\in \mathbb{Z}^{2}$ is
called\ an $E$\emph{-return vector} if 
\begin{equation*}
x(z+q)=x(z)\text{ for every }z\in E.
\end{equation*}%
We\ denote by $R_{E}$ the set of all $E$-return vectors$.$ For $U\subset 
\mathbb{P}^{1}(\mathbb{R}),$ write%
\begin{equation*}
R_{E}(U):=\{q\in R_{E}\setminus \{0\}:\func{dir}(q)\in U\}.
\end{equation*}

\bigskip

\section{Finite projection-period coset avoidance}

In this section, we introduce the finite projection-period coset avoidance
condition (\textrm{FA}) and show that each of the two conditions in Theorem %
\ref{thm:main} implies (\textrm{FA}). This allows the subsequent argument to
be carried out under a single hypothesis (\textrm{FA}).

\label{sec:avoid}

\begin{definition}
\label{def:FA} We say that $x$ satisfies \emph{finite projection-period
coset avoidance} (\textrm{FA}) if, for every finite $E\subset \mathbb{Z}^{2}$
and every finite family 
\begin{equation*}
\mathcal{C}=\{c_{\ell }+H_{B_{\ell }}:1\leq \ell \leq M\},
\end{equation*}%
where $M\geq 0$, $c_{\ell }\in \mathbb{Z}^{2}$, and $\varnothing \neq
B_{\ell }\subsetneq \mathbb{A}$, there exists an $E$-return vector $q\in 
\mathbb{Z}^{2}$ such that 
\begin{equation*}
q\notin \bigcup_{C\in \mathcal{C}}C.
\end{equation*}%
For $M=0$ the union is empty.
\end{definition}

\begin{lemma}
\label{lem:infinity-return}Suppose that $\theta \in \mathcal{R}(x)$, $%
E\subset \mathbb{Z}^{2}$ is finite, and $U\subset \mathbb{P}^{1}(\mathbb{R})$
is an open neighbourhood of $\theta $. Then $R_{E}(U)$ is infinite.
Equivalently, $R_{E}(U)$ is unbounded.
\end{lemma}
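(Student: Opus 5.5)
The plan is to prove that $R_E(U)$ is infinite directly from the definition of a recurrence direction. The equivalence of ``infinite'' and ``unbounded'' is immediate, since $R_E(U)\subset\mathbb{Z}^2$ and bounded subsets of $\mathbb{Z}^2$ are finite. So it suffices to show unboundedness, and I argue by contradiction.

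First, reduce to the windows $\Sigma_N$. Since $U$ is open and contains $\theta$, there is $\varepsilon>0$ with $\{\phi: d_\pi(\phi,\theta)<\varepsilon\}\subset U$. Since $E$ is finite, choose $N_0$ with $E\subset\Sigma_{N_0}$. For any $N\ge N_0$, the definition of $\theta\in\mathcal{R}(x)$ gives some $q_N\in\mathbb{Z}^2\setminus\{0\}$ with
\[
x[\Sigma_N+q_N]=x[\Sigma_N], \qquad d_\pi(\func{dir}(q_N),\theta)<\varepsilon .
\]
The equality of blocks means $x(z+q_N)=x(z)$ for all $z\in\Sigma_N\supset E$. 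Hence $q_N$ is a nonzero $E$-return vector with direction in $U$, that is, $q_N\in R_E(U)$.

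Next, suppose for contradiction that $R_E(U)$ is finite, say contained in $\Sigma_K$ for some $K$. Pick $N\ge\max(N_0,K)$ and apply the previous step to obtain $q=q_N\in R_E(U)$, so $q\in\Sigma_K\subset\Sigma_N$ and $q\neq0$. The block equality $x(z+q)=x(z)$ holds for all $z\in\Sigma_N$. The remaining task is to show that this pins down a genuine period, giving the desired contradiction.

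The main obstacle is that a short vector returning on a large window $\Sigma_N$ is not automatically a period of $x$, so the contradiction is not immediate. The argument must use the full strength of ``for every $N$''. For each $N\ge\max(N_0,K)$ we get some $q_N\in R_E(U)\cap\Sigma_K$, and this set is finite. By pigeonhole, a single nonzero $q$ occurs for infinitely many $N$, hence for arbitrarily large $N$. Since the windows $\Sigma_N$ increase, we get $x(z+q)=x(z)$ on $\bigcup_N\Sigma_N=\mathbb{Z}^2$. So $q$ is a period of $x$ with $\func{dir}(q)\in U$.

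It then follows that every multiple $mq$ with $m\ge1$ is also a period, and therefore an $E$-return vector. Each such $mq$ is nonzero and has the same direction as $q$, which lies in $U$. So all the $mq$ lie in $R_E(U)$, which makes $R_E(U)$ infinite. This contradicts the assumption that $R_E(U)$ is finite. In either case, therefore, $R_E(U)$ is infinite, and hence unbounded.
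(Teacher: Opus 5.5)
Your proof is correct and follows essentially the same route as the paper: for every $N\ge N_0$ you obtain $q_N\in R_E(U)$, and if $R_E(U)$ were finite, pigeonhole gives a single $q$ that returns on arbitrarily large windows $\Sigma_N$, hence is a period, and its multiples $mq$ are infinitely many elements of $R_E(U)$. Your intermediate step fixing a single $N\ge\max(N_0,K)$ is unnecessary, and your explicit choice of $\varepsilon$ with the $d_\pi$-ball around $\theta$ inside $U$ is a detail the paper leaves implicit.
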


\begin{proof}
Choose $N_{0}$ large enough such that $E\subset \Sigma _{N_{0}}.$ Since $U\ $%
is an open neighbourhood of $\theta $, by the definition of recurrence
direction, we obtain that for every $N\geq N_{0}$, there exists a nonzero
vector $q_{N}\in \mathbb{Z}^{2}$ such that 
\begin{equation*}
x[\Sigma _{N}+q_{N}]=x[\Sigma _{N}]\text{\ and\ }\func{dir}(q_{N})\in U.
\end{equation*}%
Thus for each $N\geq N_{0},$ $q_{N}\in R_{E}(U)$.

Suppose, to the contrary, that $R_{E}(U)$ is finite. Then there exist a
fixed vector $0\neq q\in \mathbb{Z}^{2}$ and a strictly increasing sequence $%
N_{j}\rightarrow \infty $ such that $q_{N_{j}}=q$ for all $j\geq 1.$ For
arbitrary $z\in \mathbb{Z}^{2}$, take $j$ large enough such that $z\in
\Sigma _{N_{j}}$. Since $x[\Sigma _{N_{j}}+q]=x[\Sigma _{N_{j}}]$, one has $%
x(z+q)=x(z)$, and therefore $q\in \func{Per}(x)$. It follows that $mq\in
R_{E}(U)$ for all $m\geq 1$. Since the vectors $mq$ are pairwise distinct,
this contradicts the finiteness of $R_{E}(U)$. Hence $R_{E}(U)$ is
infinitely. Since every bounded subset of $\mathbb{Z}^{2}$ is finite, $%
R_{E}(U)$ is unbounded.
\end{proof}

\subsection{A transverse recurrence direction implies (FA)}

\begin{lemma}
\label{lem:transverse-FA} If $x$ is non-doubly periodic by projection and
has a transverse recurrence direction, then $x$ satisfies $(\mathrm{FA})$.
\end{lemma}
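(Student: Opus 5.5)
Let $\theta\in\mathcal{R}(x)\setminus D_{\mathrm{bad}}(x)$ be a transverse recurrence direction, and fix a finite $E\subset\mathbb{Z}^{2}$ and a finite family $\mathcal{C}=\{c_{\ell}+H_{B_{\ell}}\}_{\ell\le M}$. The plan is to produce the required $E$-return vector from $R_{E}(U)$ for a small neighbourhood $U$ of $\theta$. By \cref{lem:infinity-return}, $R_{E}(U)$ is unbounded for every open $U\ni\theta$. It then suffices to show that each coset $c_{\ell}+H_{B_{\ell}}$ contains only boundedly many vectors whose direction lies in $U$, once $U$ is small enough. An unbounded set cannot lie in a finite union of bounded sets, so some $q\in R_{E}(U)$ avoids $\bigcup\mathcal{C}$.

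The key step is to analyse each coset according to $\operatorname{rank}H_{B_{\ell}}$. Since $x$ is non-doubly periodic by projection, $\pi_{B_{\ell}}(x)$ is not doubly periodic, so $\operatorname{rank}H_{B_{\ell}}\le1$.

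If $H_{B_{\ell}}=\{0\}$, the coset is the single point $c_{\ell}$, which is bounded.

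If $\operatorname{rank}H_{B_{\ell}}=1$, then $H_{B_{\ell}}=\mathbb{Z}h_{\ell}$ for some $h_{\ell}\neq0$. Its direction $\phi_{\ell}:=\phi_{H_{B_{\ell}}}=\operatorname{dir}(h_{\ell})$ belongs to $D_{\mathrm{bad}}(x)$, so $\phi_{\ell}\neq\theta$ by transversality. The coset $c_{\ell}+\mathbb{Z}h_{\ell}$ lies on an affine line parallel to $h_{\ell}$. Its points $c_{\ell}+nh_{\ell}$ satisfy $\operatorname{dir}(c_{\ell}+nh_{\ell})\to\phi_{\ell}$ as $|n|\to\infty$. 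Hence there are $\varepsilon_{\ell}>0$ and $n_{\ell}$ such that $d_{\pi}(\operatorname{dir}(c_{\ell}+nh_{\ell}),\theta)\ge\varepsilon_{\ell}$ whenever $|n|>n_{\ell}$ and $c_{\ell}+nh_{\ell}\neq0$. To verify this, I will use the elementary estimate on the angle between $c+nh$ and $h$, which is at most $\arcsin(|c|/|c+nh|)$ and tends to $0$.

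Since $D_{\mathrm{bad}}(x)$ is finite, I then take $U$ to be the open $d_{\pi}$-ball of radius $\varepsilon=\min_{\ell}\varepsilon_{\ell}$ about $\theta$ (or any open ball if no rank-one coset occurs). With this choice, $R_{E}(U)\cap\bigcup\mathcal{C}$ is contained in the finite set $\{c_{\ell}+nh_{\ell}:|n|\le n_{\ell}\}\cup\{c_{\ell}\}$. Since $R_{E}(U)$ is infinite, some $q\in R_{E}(U)$ lies outside $\bigcup\mathcal{C}$, and this $q$ is the required $E$-return vector. The case $M=0$ is trivial, since $0\in R_{E}$.

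The main obstacle is only the routine angular convergence estimate in the rank-one case, together with the observation that transversality is exactly what guarantees $\phi_{\ell}\neq\theta$. That observation holds because $\phi_{H_{B_{\ell}}}$ was included in $D_{\mathrm{bad}}(x)$ for every $B_{\ell}$ with $H_{B_{\ell}}\neq\{0\}$. No further difficulty is expected.
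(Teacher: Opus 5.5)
Your proposal is correct and follows essentially the same route as the paper. Both arguments use \cref{lem:infinity-return} to get infinitely many $E$-return vectors with direction in a neighbourhood $U$ of $\theta$, and both observe that each coset, being a point or a lattice line whose direction lies in $D_{\mathrm{bad}}(x)$ and hence away from $\theta$, contains only finitely many of them. The only cosmetic difference is how $U$ is chosen: the paper fixes it once with $\overline{U}\cap D_{\mathrm{bad}}(x)=\varnothing$ and argues by contradiction via a limit of directions, whereas you shrink $U$ depending on $\mathcal{C}$ using an explicit angle estimate.
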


\begin{proof}
Let $\theta \in \mathcal{R}(x)\setminus D_{\mathrm{bad}}(x)$. Since $D_{%
\mathrm{bad}}(x)$ is finite, there exists an open neighbourhood $U$ of $%
\theta $ such that 
\begin{equation}
\overline{U}\cap D_{\mathrm{bad}}(x)=\varnothing .  \label{eq:U-Dbad}
\end{equation}%
Fix a finite set $E\subset \mathbb{Z}^{2}$ and a finite family 
\begin{equation*}
\mathcal{C}=\{c_{\ell }+H_{B_{\ell }}:1\leq \ell \leq M\}.
\end{equation*}%
By Lemma \ref{lem:infinity-return}, $R_{E}(U)$ is infinitely.

Fix a coset $c+H_{B}\in \mathcal{C}$. If $H_{B}=\{0\}$, then $c+H_{B}$ is a
single point, and hence $(c+H_{B})\cap R_{E}(U)$ is finite. Suppose that $%
\func{rank}H_{B}=1$. Then $c+H_{B}$ is contained in an affine line $c+%
\mathbb{R}v,$ where $\func{dir}(v)=\phi _{H_{B}}$. If $(c+H_{B})\cap
R_{E}(U) $ is infinite, we could choose distinct $q_{n}$ in this
intersection with $\Vert q_{n}\Vert \rightarrow \infty ,$ where $\left\Vert
\cdot \right\Vert $ denotes the Euclidean norm on $\mathbb{R}^{2}$. Since $%
q_{n}\in c+\mathbb{R}v $, one has%
\begin{equation*}
\func{dir}(q_{n})\rightarrow \func{dir}(v)=\phi _{H_{B}}\in D_{\mathrm{bad}%
}(x).
\end{equation*}%
On the other hand, $\func{dir}(q_{n})\in U$ implies that $\phi _{H_{B}}\in 
\overline{U}$, which contradicts (\ref{eq:U-Dbad}). Thus every coset in $%
\mathcal{C}$ meets $R_{E}(U)$ in only finitely many points. Since $\mathcal{C%
}$ is finite and $R_{E}(U)$ is infinite, there exists $q\in R_{E}\backslash
\bigcup_{C\in \mathcal{C}}C.$ Hence $x$ satisfies (FA).
\end{proof}

\subsection{Strong recurrence implies (FA)}

\begin{lemma}
\label{lem:return-family} Suppose that $\theta \in \mathcal{R}(x)$, $%
E\subset \mathbb{Z}^{2}$ is finite, and $U\subset \mathbb{P}^{1}(\mathbb{R})$
is an open neighbourhood of $\theta $. Then, for every $L\geq 1$, there
exist pairwise distinct $E$-return vectors $u_{1},\ldots ,u_{L}\in \mathbb{Z}%
^{2}$ such that 
\begin{equation*}
\func{dir}(u_{i}-u_{j})\in U\text{ whenever }i\neq j.
\end{equation*}
\end{lemma}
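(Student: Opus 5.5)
We build the vectors inductively. The key tool is the recurrence condition with a window large enough to contain the earlier vectors, combined with the fact (Lemma~\ref{lem:infinity-return}) that return vectors in a given direction window can be taken arbitrarily long. First shrink $U$ to an open neighbourhood $V\subset U$ of $\theta$, say an open arc of radius $\varepsilon$ about $\theta$ with $2\varepsilon$-neighbourhood contained in $U$.

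We construct $u_1=0$ (which is trivially an $E$-return vector) and then $u_2,\dots,u_L$ recursively. Suppose $u_1,\dots,u_m$ are already chosen, pairwise distinct, $E$-return, with $\func{dir}(u_i-u_j)\in U$ for $i\neq j$. Put $F_m:=\bigcup_{i\le m}(E+u_i)$, which is finite. Every $q\in R_{F_m}$ satisfies $x(z+u_i+q)=x(z+u_i)=x(z)$ for $z\in E$. Hence each $u_i+q$ is again an $E$-return vector, although we will only need $q$ itself. Indeed, because $0=u_1$ and $E\subset F_m$, every $q\in R_{F_m}$ is itself an $E$-return vector. Set $u_{m+1}:=q$ for a suitable $q\in R_{F_m}(V)$.

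It remains to choose $q$ so that $\func{dir}(q-u_i)\in U$ for all $i\le m$ and $q\neq u_i$. By Lemma~\ref{lem:infinity-return}, $R_{F_m}(V)$ is unbounded, so we can pick $q$ with $\|q\|$ arbitrarily large and $\func{dir}(q)\in V$. For fixed $u_i$, the angle between $q-u_i$ and $q$ tends to $0$ as $\|q\|\to\infty$; it is at most of order $\arcsin(\|u_i\|/\|q\|)$. Taking $\|q\|$ large relative to $\max_i\|u_i\|$ therefore gives $d_\pi(\func{dir}(q-u_i),\func{dir}(q))<\varepsilon$, so $\func{dir}(q-u_i)\in U$. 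The same largeness guarantees $q\neq u_i$, which gives distinctness.

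The only delicate point is this elementary angle estimate. If one prefers to avoid $u_1=0$, one can instead start from any nonzero $u_1\in R_E(V)$ and apply the same step. The induction stops at $m=L$, which proves the lemma.
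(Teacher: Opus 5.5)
Your argument is correct and is essentially the paper's. Both proofs choose return vectors in a narrow direction window with rapidly growing norms, so that $u_j-u_i$ inherits the direction of the longer vector. The paper phrases the angle estimate as a cone-perturbation claim inside a fixed half-cone $C_1\subset C_0$, whereas you use the bound $\arcsin(\|u_i\|/\|q\|)$ together with the triangle inequality for $d_\pi$ directly.

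Enlarging $E$ to $F_m$ is harmless but unnecessary, since picking $q\in R_E(V)$ of large norm already suffices. Your choice $u_1=0$ is admissible, because the lemma does not require the vectors to be nonzero.
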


\begin{proof}
Let 
\begin{equation*}
\varpi :S^{1}\longrightarrow \mathbb{P}^{1}(\mathbb{R}),\text{\ }\varpi (\xi
)=\mathbb{R}\xi ,
\end{equation*}%
be the natural projection. Choose open neighbourhoods $U_{1}$ and $U_{0}$ of 
$\theta $ in $\mathbb{P}^{1}(\mathbb{R})$ such that 
\begin{equation}
\overline{U_{1}}\subset U_{0},\text{\ }\overline{U_{0}}\subset U.
\label{eq:nested-direction-arcs}
\end{equation}%
Then for each $s=0,1,$ $\varpi ^{-1}(U_{s})$ is the union of two opposite
open arcs of $S^{1}$.

By Lemma \ref{lem:infinity-return}, $R_{E}(U_{1})$ is unbounded. Hence one
component $I_{1}$ of $\varpi ^{-1}(U_{1})$ determines an open cone 
\begin{equation*}
C_{1}:=\{l\xi :l\in \mathbb{R},\ l>0,\text{ }\xi \in I_{1}\}
\end{equation*}%
such that $R_{E}\cap C_{1}$ is unbounded. Let $I_{0}$ be the component of $%
\varpi ^{-1}(U_{0})$ containing $\overline{I_{1}}$, and set 
\begin{equation*}
C_{0}:=\{l\xi :l\in \mathbb{R},\ l>0,\text{ }\xi \in I_{0}\}.
\end{equation*}%
In particular, by (\ref{eq:nested-direction-arcs}) we have 
\begin{equation*}
\func{dir}(C_{0})\subset U_{0}\subset U.
\end{equation*}%
Since $\overline{I_{1}}$ is a compact subset of the open set $C_{0}$, we have%
\begin{equation*}
d:=\func{dist}(\overline{I_{1}},\mathbb{R}^{2}\setminus C_{0})>0.
\end{equation*}

Let $0<\varepsilon <\min \{1,d\}$, we claim that 
\begin{equation}
v\in C_{1},\text{\ }\Vert w\Vert \leq \varepsilon \Vert v\Vert \text{\ }%
\Longrightarrow \text{\ }v-w\in C_{0}.  \label{eq:cone-perturb}
\end{equation}%
Indeed, for 
\begin{equation*}
\xi :=\frac{v}{\Vert v\Vert }\in \overline{I_{1}}\text{ and }z:=\frac{w}{%
\Vert v\Vert },
\end{equation*}%
we have $\Vert z\Vert \leq \varepsilon <d$, and hence $\xi -z\in C_{0}$. As $%
C_{0}$ is closed under multiplication by positive scalars, we obtain 
\begin{equation*}
v-w=\Vert v\Vert (\xi -z)\in C_{0}.
\end{equation*}

Now choose $u_{1},\ldots ,u_{L}\in R_{E}\cap C_{1}$ inductively such that%
\begin{equation}
\Vert u_{j}\Vert >\varepsilon ^{-1}\max_{i<j}\Vert u_{i}\Vert ,\text{ }2\leq
j\leq L.  \label{eq:return-scale-separation}
\end{equation}%
which is possible because $R_{E}\cap C_{1}$ is unbounded. If $i<j$, then $%
\Vert u_{i}\Vert <\varepsilon \Vert u_{j}\Vert $, thus (\ref{eq:cone-perturb}%
) implies $u_{j}-u_{i}\in C_{0}$. Therefore 
\begin{equation*}
\func{dir}(u_{j}-u_{i})\in U.
\end{equation*}%
Since directions are unoriented, we also have $\func{dir}(u_{i}-u_{j})\in U$%
. Finally, (\ref{eq:return-scale-separation}) implies that $\left\Vert
u_{1}\right\Vert <\cdots <\left\Vert u_{L}\right\Vert $, hence the vectors $%
u_{1},\ldots ,u_{L}$ are pairwise distinct.
\end{proof}

\begin{lemma}
\label{lem:grid-lines} Let $\mathcal{U},\mathcal{V}\subset \mathbb{P}^{1}(%
\mathbb{R})$ with $\mathcal{U}\cap \mathcal{V}=\varnothing $. Suppose that%
\begin{equation*}
\{u_{1},\ldots ,u_{L}\},\text{\ }\{v_{1},\ldots ,v_{L}\}\subset \mathbb{R}%
^{2}\text{ }(L\geq 1)
\end{equation*}%
are pairwise distinct within each family, and satisfy 
\begin{equation*}
\func{dir}(u_{i}-u_{i^{\prime }})\in \mathcal{U}\quad \text{whenever }i\neq
i^{\prime },\qquad \func{dir}(v_{j}-v_{j^{\prime }})\in \mathcal{V}\quad 
\text{whenever }j\neq j^{\prime }.
\end{equation*}%
Then the $L^{2}$ sums $u_{i}+v_{j}$ are pairwise distinct, and every affine
line in $\mathbb{R}^{2}$ contains at most $L$ of them.
\end{lemma}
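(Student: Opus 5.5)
The plan is to argue directly from the direction constraints, in two parts: first distinctness of the $L^{2}$ sums, then the bound of $L$ points per affine line. The underlying observation is that any difference of two sums decomposes as
\begin{equation*}
(u_{i}+v_{j})-(u_{i^{\prime }}+v_{j^{\prime }})=(u_{i}-u_{i^{\prime }})+(v_{j}-v_{j^{\prime }}),
\end{equation*}
and when one of the two brackets vanishes, the direction of the difference is controlled by the other bracket.

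\emph{Distinctness.} Suppose $u_{i}+v_{j}=u_{i^{\prime }}+v_{j^{\prime }}$ with $(i,j)\neq (i^{\prime },j^{\prime })$. Then $u_{i}-u_{i^{\prime }}=v_{j^{\prime }}-v_{j}$. If $i=i^{\prime }$, the left side is $0$, so $v_{j}=v_{j^{\prime }}$, which forces $j=j^{\prime }$ by pairwise distinctness, a contradiction. The case $j=j^{\prime }$ is symmetric. If $i\neq i^{\prime }$ and $j\neq j^{\prime }$, both sides are nonzero, and the common vector has direction in $\mathcal{U}$ and in $\mathcal{V}$. This contradicts $\mathcal{U}\cap \mathcal{V}=\varnothing $. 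Hence the $L^{2}$ sums are pairwise distinct.

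\emph{At most $L$ points on a line.} Fix an affine line $\ell $ with direction $\phi \in \mathbb{P}^{1}(\mathbb{R})$. Since $\mathcal{U}\cap \mathcal{V}=\varnothing $, we have $\phi \notin \mathcal{U}$ or $\phi \notin \mathcal{V}$; by symmetry, suppose $\phi \notin \mathcal{U}$.

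Claim: for each $j$, at most one $i$ has $u_{i}+v_{j}\in \ell $. Indeed, if $u_{i}+v_{j}$ and $u_{i^{\prime }}+v_{j}$ are both on $\ell $ with $i\neq i^{\prime }$, their difference $u_{i}-u_{i^{\prime }}$ is a nonzero vector parallel to $\ell $. Its direction is then $\phi $, yet it lies in $\mathcal{U}$, a contradiction.

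So the points of $\ell $ among the sums are indexed by distinct values of $j$, and there are at most $L$ of them. In the other case, $\phi \notin \mathcal{V}$, the same argument with the roles of $u$ and $v$ exchanged shows that each $i$ contributes at most one point.

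There is no genuine obstacle here; the statement is elementary. The only point needing care is the case split on which of $\mathcal{U}$, $\mathcal{V}$ omits the direction of $\ell $, and this case split is exactly where the disjointness hypothesis is used.
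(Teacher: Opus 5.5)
Your proof is correct and follows essentially the same route as the paper: distinctness comes from $u_i-u_{i'}=v_{j'}-v_j$ having a direction in both $\mathcal{U}$ and $\mathcal{V}$, and the line bound comes from the observation that two sums on a line sharing an index force the line's direction into $\mathcal{U}$ or $\mathcal{V}$. The only difference is cosmetic: the paper argues by contradiction with a double pigeonhole (more than $L$ points on $\mathcal{L}$ put $\func{dir}(\mathcal{L})$ in both $\mathcal{U}$ and $\mathcal{V}$), whereas you first pick which of $\mathcal{U},\mathcal{V}$ omits the line's direction and then get injectivity in one index directly.
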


\begin{proof}
We first show that the sums $u_{i}+v_{j}$ are pairwise distinct. Suppose
that 
\begin{equation*}
u_{i}+v_{j}=u_{i^{\prime }}+v_{j^{\prime }}.
\end{equation*}%
Then $u_{i}-u_{i^{\prime }}=v_{j^{\prime }}-v_{j}$. If $i\neq i^{\prime }$,
the common vector $u_{i}-u_{i^{\prime }}$ is nonzero, hence $j\neq j^{\prime
}$ and its direction belongs to both $\mathcal{U}$ and $\mathcal{V}$. This
contradicts $\mathcal{U}\cap \mathcal{V}=\varnothing $. Thus $i=i^{\prime }$%
, and then $j=j^{\prime }$. Hence all $L^{2}$ sums are distinct.

Let $\mathcal{L}$ be an affine line and suppose that $\mathcal{L}$ contains
more than $L$ of these sums. Set 
\begin{equation*}
S_{\mathcal{L}}:=\{(i,j)\in \{1,\ldots ,L\}^{2}:u_{i}+v_{j}\in \mathcal{L}\}.
\end{equation*}%
Then $\sharp S_{\mathcal{L}}>L.$ As there are only $L$ possible first
indices, the pigeonhole principle yields two distinct elements $%
(i_{1},j),(i_{1},j^{\prime })\in S_{\mathcal{L}},$ and hence 
\begin{equation*}
\func{dir}(\mathcal{L})=\func{dir}(v_{j}-v_{j^{\prime }})\in \mathcal{V}.
\end{equation*}%
Similarly, since there are only $L$ possible second indices, there exist
distinct $(i,j_{1}),(i^{\prime },j_{1})\in S_{\mathcal{L}}.$ Thus 
\begin{equation*}
\func{dir}(\mathcal{L})=\func{dir}(u_{i}-u_{i^{\prime }})\in \mathcal{U}.
\end{equation*}%
This contradicts $\mathcal{U}\cap \mathcal{V}=\varnothing $.
\end{proof}

\begin{proposition}
\label{prop:strong-FA} If $x$ is strongly recurrent and non-doubly periodic
by projection, then $x$ satisfies $(\mathrm{FA})$.
\end{proposition}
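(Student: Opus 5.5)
By \cref{lem:strong-two-directions}, $\mathcal{R}(x)$ contains two distinct directions $\theta_1\neq\theta_2$. We will build many $E$-return vectors with controlled geometry and then argue that finitely many cosets cannot cover them. Fix a finite $E\subset\mathbb{Z}^2$ and a finite family $\mathcal{C}=\{c_\ell+H_{B_\ell}:1\le \ell\le M\}$. Since $x$ is non-doubly periodic by projection, every $H_{B_\ell}$ has rank at most one. Each coset is therefore either a single point or is contained in an affine line $c_\ell+\mathbb{R}v_\ell$. Hence $\bigcup\mathcal{C}$ is contained in the union of at most $M$ affine lines. (For a point coset, any line through that point will do.)

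Choose disjoint open neighbourhoods $\mathcal{U}$ of $\theta_1$ and $\mathcal{V}$ of $\theta_2$ in $\mathbb{P}^1(\mathbb{R})$, and fix $L>M$. We apply \cref{lem:return-family} twice, but with the finite set $E'=E\cup(E+\Sigma)$ for a suitable auxiliary set $\Sigma$ in place of $E$. This yields pairwise distinct return vectors $u_1,\dots,u_L$ with $\func{dir}(u_i-u_{i'})\in\mathcal{U}$, and $v_1,\dots,v_L$ with $\func{dir}(v_j-v_{j'})\in\mathcal{V}$. By \cref{lem:grid-lines}, the $L^2$ sums $u_i+v_j$ are pairwise distinct and every affine line contains at most $L$ of them. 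The $M$ lines therefore cover at most $ML<L^2$ of the sums, so some sum $q=u_i+v_j$ avoids $\bigcup\mathcal{C}$.

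It remains to ensure that $q=u_i+v_j$ is itself an $E$-return vector. This is the main obstacle, because a sum of return vectors need not be a return vector. Knowing $x(z+v_j)=x(z)$ for $z\in E$ and $x(w+u_i)=x(w)$ for $w\in E+v_j$ would give $x(z+u_i+v_j)=x(z+v_j)=x(z)$. So the order of construction matters. First choose the $v_j$ as $E$-return vectors via \cref{lem:return-family}. Then set $E'':=\bigcup_{j=1}^{L}(E+v_j)$, which is still finite, and choose the $u_i$ as $E''$-return vectors via \cref{lem:return-family} applied with $\theta_1$ and $\mathcal{U}$. With this choice every $u_i+v_j$ is an $E$-return vector, the hypotheses of \cref{lem:grid-lines} hold, and the counting argument above yields the required $q\notin\bigcup_{C\in\mathcal{C}}C$. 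The case $M=0$ is trivial, since $R_E$ contains $0$, or any vector supplied by \cref{lem:infinity-return}. This establishes (FA).
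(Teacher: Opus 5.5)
Your proof is correct and follows the paper's argument. The ingredients match: two recurrence directions with disjoint neighbourhoods, the cosets covered by at most $M$ affine lines, \cref{lem:return-family} applied twice with the second family chosen as return vectors for the translated set, and then \cref{lem:grid-lines} with the count $ML<L^2$. The only difference is cosmetic. The paper builds the $u_i$ first and takes the $v_j$ to be $\bigcup_i(E+u_i)$-return vectors, whereas you reverse the order; your earlier placeholder $E\cup(E+\Sigma)$ is superseded by the concrete choice $E''=\bigcup_j(E+v_j)$.
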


\begin{proof}
By Lemma \ref{lem:strong-two-directions}, choose distinct recurrence
directions $\theta ,\eta $ and disjoint open neighbourhoods $U_{\theta
},U_{\eta }\subset \mathbb{P}^{1}(\mathbb{R})$ of $\theta $ and $\eta $,
respectively.

Fix a finite set $E\subset \mathbb{Z}^{2}$ and a finite family 
\begin{equation*}
\mathcal{C}=\{c_{\ell }+H_{B_{\ell }}:1\leq \ell \leq M\}.
\end{equation*}%
If $M=0$, then $q=0$ is an $E$-return vector and there is nothing to avoid.
Assume $M\geq 1$. Since $x$ is non-doubly periodic by projection, we have%
\begin{equation*}
\func{rank}H_{B_{\ell }}\leq 1\text{ for every }1\leq \ell \leq M.
\end{equation*}%
Hence each coset $c_{\ell }+H_{B_{\ell }}$ is contained in an affine line $%
\mathcal{L}_{\ell }\subset \mathbb{R}^{2}$.

Let $L:=M+1$. By Lemma \ref{lem:return-family}, there exist $L$ pairwise
distinct $E$-return vectors $u_{1},\ldots ,u_{L}$ such that 
\begin{equation*}
\func{dir}(u_{i}-u_{i^{\prime }})\in U_{\theta }\text{ whenever }i\neq
i^{\prime }.
\end{equation*}%
Set 
\begin{equation*}
E^{\prime }:=\bigcup_{i=1}^{L}(E+u_{i}),
\end{equation*}%
which is finite. Applying Lemma \ref{lem:return-family} to $E^{\prime }$ and
the direction $\eta $, we obtain $L$ pairwise distinct $E^{\prime }$-return
vectors $v_{1},\ldots ,v_{L}$ such that 
\begin{equation*}
\func{dir}(v_{j}-v_{j^{\prime }})\in U_{\eta }\quad \text{whenever }j\neq
j^{\prime }.
\end{equation*}%
For $1\leq i,j\leq L$, define $q_{ij}:=u_{i}+v_{j}$. By Lemma \ref%
{lem:grid-lines}, the $L^{2}$ vectors $q_{ij}$ are distinct and each affine
line $\mathcal{L}_{\ell }$ contains at most $L$ of them. Therefore, we have 
\begin{equation*}
\#\left( \{q_{ij}\}_{i,j}\cap \bigcup_{\ell =1}^{M}\mathcal{L}_{\ell
}\right) \leq ML<L^{2}.
\end{equation*}%
Hence there exists some $q:=q_{ij}$ that lies outside every $\mathcal{L}%
_{\ell }$, and therefore $q\notin \bigcup_{C\in \mathcal{C}}C.$

If $z\in E$, then $x(z+u_{i})=x(z)$ and $z+u_{i}\in E^{\prime }$, whence 
\begin{equation*}
x(z+q_{ij})=x(z+u_{i}+v_{j})=x(z+u_{i})=x(z).
\end{equation*}%
Thus every $q_{ij}$ is an $E$-return vector, which implies $q\in
R_{E}\backslash \bigcup_{C\in \mathcal{C}}C.$ Hence $x$ satisfies $(\mathrm{%
FA})$.
\end{proof}

\bigskip

\section{Sampling sequences and Ramsey extraction}

\label{sec:sampling}

In this section, we use (FA) and the Ramsey-type lemma (Lemma \ref%
{lem:Kamae-ramsey}) to construct a sampling sequence with the properties
needed for the Abelian lower bound in Subsection \ref{subsec:lower bound}.

For $n\in \mathbb{N}$ and distinct $g_{0},\ldots ,g_{n}\in \mathbb{Z}^{2}$,
define 
\begin{equation*}
\mathcal{W}(g_{0},\ldots ,g_{n}):=\{(y(g_{0}),\ldots ,y(g_{n})):y\in X_{x}\}.
\end{equation*}

\begin{proposition}
\label{prop:safe-sampling} Assume that $x$ satisfies $(\mathrm{FA})$. Then
there exists a sequence%
\begin{equation*}
\mathbf{g}=(g_{n})_{n\geq 0}\text{ }(g_{0}=0)
\end{equation*}%
of pairwise distinct points in $\mathbb{Z}^{2}$ satisfying the following
property: 
\begin{flalign*}
\text{\rm(H$_1$)}
&&
g_j-g_i\notin H_B
&&
\end{flalign*}for every $i<j$ and every $\varnothing \neq B\subsetneq 
\mathbb{A}$.

Moreover, set 
\begin{equation*}
\Omega _{\mathbf{g}}:=\{(y(g_{n}))_{n\geq 0}:y\in X_{x}\}\subset \mathbb{A}^{%
\mathbb{N}}.
\end{equation*}%
Then $\Omega _{\mathbf{g}}$ satisfies the following property: 
\begin{flalign*}
\text{\rm(H$_2$)}
&&
\omega _{0}\omega _{1}\cdots \omega _{i-1}\omega _{i}^{\infty }\in \Omega _{%
\mathbf{g}}
&&
\end{flalign*}for every $\omega =(\omega _{n})_{n\geq 0}\in \Omega _{\mathbf{%
g}}$ and every $i\geq 0.$
\end{proposition}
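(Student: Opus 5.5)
The plan is to build $\mathbf{g}$ inductively as $g_{n+1}=g_n+q_n$. Each step $q_n$ will be a return vector for a finite set $E_n$, and (FA) will be applied to $E_n$ together with a finite family of cosets chosen so that (H$_1$) holds. The natural worry is that return vectors of $x$ only control $x$ near the origin, whereas (H$_2$) must hold for every $y\in X_x$. To handle this, I put into $E_n$ translates of large boxes centred at finitely many \emph{realizing shifts}: one shift for each word that can appear on $g_0,\ldots,g_i$ with $i\le n$. Finiteness of $\mathbb{A}$ keeps these sets finite.

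\emph{Construction.} Put $g_0=0$ and suppose $g_0,\ldots,g_n$ have been chosen. For each $i\le n$ and each word $w\in\mathcal{W}(g_0,\ldots,g_i)$ (a finite set), fix once and for all, using Lemma~\ref{lem:realize} with $F=\{g_0,\ldots,g_i\}$, a vector $t_w\in\mathbb{Z}^2$ such that $x(t_w+g_k)=w_k$ for $0\le k\le i$. Let $N_n:=\max_{k\le n}\|g_k\|_\infty$ and define the finite set
\begin{equation*}
E_n:=\bigcup_{i\le n}\ \bigcup_{w\in\mathcal{W}(g_0,\ldots,g_i)}\bigl(t_w+\Sigma_{N_n}\bigr).
\end{equation*}
Next consider the finite family of cosets $\mathcal{C}_n:=\{(g_k-g_n)+H_B: 0\le k\le n,\ \varnothing\neq B\subsetneq\mathbb{A}\}$. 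By (FA) there is an $E_n$-return vector $q_n\notin\bigcup_{C\in\mathcal{C}_n}C$. Set $g_{n+1}:=g_n+q_n$.

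\emph{Property (H$_1$).} For $k\le n$ we have $g_{n+1}-g_k=q_n-(g_k-g_n)$. Since $q_n\notin (g_k-g_n)+H_B$, it follows that $g_{n+1}-g_k\notin H_B$ for every admissible $B$. Such a $B$ exists because $r\ge 2$, and $0\in H_B$. Hence $g_{n+1}-g_k\neq 0$, so the points $g_n$ are pairwise distinct.

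\emph{Property (H$_2$).} Let $\omega\in\Omega_{\mathbf{g}}$ come from some $y\in X_x$, fix $i\ge 0$, and put $w:=(\omega_0,\ldots,\omega_i)\in\mathcal{W}(g_0,\ldots,g_i)$. I claim that $x':=T^{t_w}x\in X_x$ satisfies $x'(g_n)=\omega_i$ for all $n\ge i$; it already agrees with $\omega$ on $g_0,\ldots,g_i$. Argue by induction on $n>i$. The level $i\le n-1$ was included when $E_{n-1}$ was formed, and $\|g_{n-1}\|_\infty\le N_{n-1}$, so $t_w+g_{n-1}\in E_{n-1}$. Since $q_{n-1}$ is an $E_{n-1}$-return vector,
\begin{equation*}
x(t_w+g_n)=x(t_w+g_{n-1}+q_{n-1})=x(t_w+g_{n-1})=\omega_i .
\end{equation*}
Therefore $(x'(g_n))_{n\ge0}=\omega_0\cdots\omega_{i-1}\omega_i^{\infty}\in\Omega_{\mathbf{g}}$.

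The main obstacle is the one already identified: an arbitrary $y\in X_x$ is seen only through translates $T^{t}x$, with $t$ uncontrolled. It is resolved by fixing the finitely many realizers $t_w$ before choosing $q_n$, and by letting $E_n$ grow so that every later step preserves all earlier realizers. The remaining care is bookkeeping: the box $\Sigma_{N_n}$ must contain $g_{n}$ itself. No limiting argument is needed, because $x'$ realizes the whole infinite word exactly.
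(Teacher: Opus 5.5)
Your proof is correct and follows the paper's construction: $g_{n+1}=g_n+q_n$ with $q_n$ an $E_n$-return vector supplied by (FA) avoiding the cosets $(g_k-g_n)+H_B$, where $E_n$ is built from shifts realizing the finitely many admissible words, and (H$_1$) is checked exactly as in the paper. The only difference is in how (H$_2$) is obtained: you fix each realizer $t_w$ once and keep it in every later $E_n$, so the single orbit point $T^{t_w}x$ realizes $\omega_0\cdots\omega_{i-1}\omega_i^{\infty}$ exactly, whereas the paper re-chooses its realizers at each step to maintain the finite-level invariant $(\mathrm{P}_n)$ and then needs closedness of $\Omega_{\mathbf{g}}$ and a limit to conclude.
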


\begin{proof}
We construct the $g_{0},g_{1},g_{2},\ldots $ inductively such that for each $%
n\geq 0,$ the following two properties hold: 
\begin{equation*}
\begin{aligned} (\mathrm{D_n})\qquad & g_j-g_i\notin H_B,\\ & \hspace{2em}
\text{for all }0\le i<j\le n \text{ and all }\varnothing\ne B\subsetneq A;
\\[1mm] (\mathrm{P_n})\qquad & u_0\cdots u_{j-1}u_j^{\,n-j+1} \in \mathcal
W(g_0,\ldots,g_n),\\ & \hspace{2em} \text{for every }0\le j\le n \text{ and
every } u=u_0\cdots u_j\in\mathcal W(g_0,\ldots,g_j). \end{aligned}
\end{equation*}

Take $g_{0}=0$. Property $(\mathrm{P}_{0})$ is immediate and $(\mathrm{D}%
_{0})$ is vacuous.

Assume that $g_{0},\ldots ,g_{n}$ have been constructed and satisfy $(%
\mathrm{D}_{n})$ and $(\mathrm{P}_{n})$. For every $0\leq j\leq n$ and $%
u=u_{0}\cdots u_{j}\in \mathcal{W}(g_{0},\ldots ,g_{j}),$ $(\mathrm{P}_{n})$
implies that there exists $y_{u,j}\in X_{x}$ such that 
\begin{equation}
(y_{u,j}(g_{0}),\ldots ,y_{u,j}(g_{n}))=u_{0}\cdots u_{j-1}u_{j}^{n-j+1}.
\label{eq:reli_1}
\end{equation}%
By Lemma \ref{lem:realize}, there exists $t_{u,j}\in \mathbb{Z}^{2}$ such
that 
\begin{equation}
(T^{t_{u,j}}x)(g_{m})=y_{u,j}(g_{m})\text{ for all }0\leq m\leq n.
\label{eq:reli_2}
\end{equation}%
Let 
\begin{equation*}
E_{n}:=\{t_{u,j}+g_{n}:0\leq j\leq n,\text{ }u\in \mathcal{W}(g_{0},\ldots
,g_{j})\},
\end{equation*}%
then $E_{n}$ is finite since $\mathbb{A}$ is finite. Define the finite
family of forbidden cosets%
\begin{equation*}
\mathcal{C}_{n}:=\{(g_{i}-g_{n})+H_{B}:0\leq i\leq n,\ \varnothing \neq
B\subsetneq \mathbb{A}\}.
\end{equation*}%
Since $x$ satisfies $(\mathrm{FA})$, there exists an $E_{n}$-return vector $%
q_{n}\in \mathbb{Z}^{2}$ such that%
\begin{equation}
q_{n}\notin \bigcup_{C\in \mathcal{C}_{n}}C.  \label{eq:forbi-q_n}
\end{equation}

Define $g_{n+1}:=g_{n}+q_{n}.$ We first verify $(\mathrm{D}_{n+1}).$ The
pairs $0\leq i<j\leq n$ are covered by $(\mathrm{D}_{n}).$ For $0\leq i\leq
n, $ if $g_{n+1}-g_{i}\in H_{B}$ for some $\varnothing \neq B\subsetneq 
\mathbb{A}$, then $g_{n}+q_{n}-g_{i}\in H_{B}.$ Hence 
\begin{equation*}
q_{n}\in (g_{i}-g_{n})+H_{B},
\end{equation*}%
which contradicts (\ref{eq:forbi-q_n}). Thus $(\mathrm{D}_{n+1})$ holds. In
particular, $g_{0},\ldots g_{n+1}$ are pairwise distinct since $0\in H_{B}$
for every $\varnothing \neq B\subsetneq \mathbb{A}.$

We next verify $(\mathrm{P}_{n+1})$. The case $j=n+1$ is immediate. For each
pair $0\leq j\leq n$ and $u\in \mathcal{W}(g_{0},\ldots ,g_{j})$, (\ref%
{eq:reli_1})--(\ref{eq:reli_2}) implies that 
\begin{equation}
(T^{t_{u,j}}x)(g_{0})\cdots (T^{t_{u,j}}x)(g_{n})=u_{0}\cdots
u_{j-1}u_{j}^{n-j+1}.  \label{eq:reli-u-t}
\end{equation}%
Since $t_{u,j}+g_{n}\in E_{n}$ and $q_{n}$ is an $E_{n}$-return vector, we
have 
\begin{equation*}
x(t_{u,j}+g_{n}+q_{n})=x(t_{u,j}+g_{n}),
\end{equation*}%
and hence 
\begin{equation*}
(T^{t_{u,j}}x)(g_{n+1})=(T^{t_{u,j}}x)(g_{n})=u_{j}.
\end{equation*}%
Together with (\ref{eq:reli-u-t}), we obtain that 
\begin{equation*}
u_{0}\cdots u_{j-1}u_{j}^{n-j+2}\in \mathcal{W}(g_{0},\ldots ,g_{n+1}),
\end{equation*}%
thus $(\mathrm{P}_{n+1})$ holds. The induction is complete. Let $\mathbf{g}%
=(g_{n})_{n\geq 0}$. Then $\mathbf{g}$ is pairwise distinct, and property $(%
\mathrm{H}_{1})$ follows immediately from $(\mathrm{D}_{n})$ for all $n\geq
0 $.

It remains to prove ($\mathrm{H}_{2}$). Note that the map $\Phi
:X_{x}\rightarrow \mathbb{A}^{\mathbb{N}}$, $\Phi (y)=(y(g_{m}))_{m\geq 0},$
is continuous. Then $\Omega _{\mathbf{g}}=\Phi (X_{x})$ is compact, and
therefore closed in $\mathbb{A}^{\mathbb{N}}$.

Let $\omega =(\omega _{n})_{n\geq 0}\in \Omega _{\mathbf{g}}$ and $i\geq 0$.
By property $(\mathrm{P}_{n})$, for every $n\geq i$, there exists $%
v^{(n)}\in X_{x}$ satisfying 
\begin{equation*}
v^{(n)}(g_{m})=%
\begin{cases}
\omega _{m}, & 0\leq m<i, \\ 
\omega _{i}, & i\leq m\leq n.%
\end{cases}%
\end{equation*}%
Thus 
\begin{equation*}
\omega _{0}\cdots \omega _{i-1}\omega _{i}^{\infty }=\lim_{n\rightarrow
\infty }\Phi (v^{(n)})\in \Omega _{\mathbf{g}},
\end{equation*}%
which proves that $\Omega _{\mathbf{g}}$ satisfies ($\mathrm{H}_{2}$).
\end{proof}

\begin{lemma}
\label{lem:tail-subsequence}Let $\Omega \subset \mathbb{A}^{\mathbb{N}}$ be
nonempty. If $\Omega $ satisfies property $\mathrm{(H}_{2}\mathrm{)}$, then
so does $\Omega \lbrack J]$ for every infinite subset $J=\{j_{0}<j_{1}<%
\cdots \}\subset \mathbb{N}$.
\end{lemma}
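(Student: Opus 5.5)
The proof will be a direct unwinding of the definitions. Let $\omega' \in \Omega[J]$ and $i\geq 0$; we must show $\omega'_0\cdots\omega'_{i-1}(\omega'_i)^{\infty}\in\Omega[J]$. By definition of $\Omega[J]$ there is $\omega\in\Omega$ with $\omega'_n=\omega(j_n)$ for all $n\geq 0$.

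The key step is to apply $(\mathrm{H}_2)$ to $\omega$ at the index $j_i$, not at $i$. This gives
\begin{equation*}
\tilde\omega:=\omega_0\omega_1\cdots\omega_{j_i-1}\,\omega_{j_i}^{\infty}\in\Omega .
\end{equation*}
Now read off $\tilde\omega$ along $J$. For $n<i$ we have $j_n<j_i$, so $\tilde\omega(j_n)=\omega(j_n)=\omega'_n$. For $n\geq i$ we have $j_n\geq j_i$, because $J$ is listed increasingly, so $\tilde\omega(j_n)=\omega(j_{i})=\omega'_i$. Hence $(\tilde\omega(j_n))_{n\geq0}=\omega'_0\cdots\omega'_{i-1}(\omega'_i)^\infty$, and this sequence lies in $\Omega[J]$ since $\tilde\omega\in\Omega$.

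The only point needing care is the choice of the index $j_i$ rather than $i$, together with the monotonicity of $(j_n)$. Nonemptiness of $\Omega[J]$ follows from that of $\Omega$, so there is no real obstacle.
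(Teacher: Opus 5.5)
Your proposal is correct and is essentially the paper's proof: you apply $(\mathrm{H}_2)$ to a preimage $\omega\in\Omega$ at the index $j_i$ rather than $i$, then read the result off along $J$. Your explicit check of the cases $n<i$ and $n\geq i$, using that $(j_n)$ is increasing, spells out a step the paper leaves implicit.
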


\begin{proof}
Let $\nu =(\nu _{n})_{n\geq 0}\in \Omega \lbrack J]$ and fix $i\geq 0$. By
the definition of $\Omega \lbrack J],$ there exists $\omega =(\omega
_{n})_{n\geq 0}\in \Omega $ such that $\nu _{n}=\omega _{j_{n}}$ for all $%
n\in \mathbb{N}$.\ Apply property $\mathrm{(H}_{2}\mathrm{)}$ to $\omega $
at the coordinate $j_{i}$, one has 
\begin{equation*}
\widetilde{\omega }:=\omega _{0}\cdots \omega _{j_{i}-1}\omega
_{j_{i}}^{\infty }\in \Omega .
\end{equation*}%
Hence 
\begin{equation*}
\nu _{0}\cdots \nu _{i-1}\nu _{i}^{\infty }=(\widetilde{\omega }%
_{j_{n}})_{n\geq 0}\in \Omega \lbrack J].
\end{equation*}%
Thus $\Omega \lbrack J]$ satisfies property $\mathrm{(H}_{2}\mathrm{).}$
\end{proof}

\begin{proposition}
\label{prop:standard-family} Assume that $x$ satisfies $(\mathrm{FA})$ and
fix $k\geq 1$. Then there exists a sequence $\mathbf{h}=(h_{n})_{n\geq 0}$
of pairwise distinct points in $\mathbb{Z}^{2}$ satisfying $\mathrm{(H}_{1}%
\mathrm{).}$ Set 
\begin{equation*}
\Omega :=\{(y(h_{n}))_{n\geq 0}:y\in X_{x}\}\subset \mathbb{A}^{\mathbb{N}}.
\end{equation*}%
Then $\Omega $ satisfies $\mathrm{(H}_{2}\mathrm{).}$ Moreover, we have 
\begin{equation}
\Omega \lbrack P]=\Omega \lbrack Q]  \label{eq:H3-Ramsey}
\end{equation}%
whenever $P,Q\subset \mathbb{N}$ are finite and $1\leq |P|=|Q|\leq k+1$.
\end{proposition}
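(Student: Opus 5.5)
The plan is to combine Proposition~\ref{prop:safe-sampling} with the Ramsey-type Lemma~\ref{lem:Kamae-ramsey}, passing to a subsequence of the sampling sequence. First, apply Proposition~\ref{prop:safe-sampling} to obtain a sequence $\mathbf{g}=(g_{n})_{n\geq 0}$ of pairwise distinct points satisfying (H$_1$). The associated set $\Omega_{\mathbf{g}}\subset\mathbb{A}^{\mathbb{N}}$ is nonempty, closed, and satisfies (H$_2$). Next, apply Lemma~\ref{lem:Kamae-ramsey} to $\Omega_{\mathbf{g}}$ with $I=\mathbb{N}$ and $m=k+1$. This gives an infinite set $J=\{j_{0}<j_{1}<\cdots\}\subset\mathbb{N}$ such that $\Omega_{\mathbf{g}}[P']=\Omega_{\mathbf{g}}[Q']$ for all finite $P',Q'\subset J$ with $1\leq|P'|=|Q'|\leq k+1$.

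Define $h_{n}:=g_{j_{n}}$. Since the $j_n$ are strictly increasing, the $h_{n}$ are pairwise distinct. Property (H$_1$) is inherited: if $n<n'$ then $j_{n}<j_{n'}$, so $h_{n'}-h_{n}=g_{j_{n'}}-g_{j_{n}}\notin H_{B}$ for every $\varnothing\neq B\subsetneq\mathbb{A}$. The new set satisfies
\[
\Omega=\{(y(h_{n}))_{n\geq 0}:y\in X_{x}\}=\Omega_{\mathbf{g}}[J]
\]
by definition. Hence Lemma~\ref{lem:tail-subsequence} shows that $\Omega$ satisfies (H$_2$).

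It remains to verify \eqref{eq:H3-Ramsey}. Let $P,Q\subset\mathbb{N}$ be finite with $1\leq|P|=|Q|\leq k+1$, and set $P'=\{j_{p}:p\in P\}$ and $Q'=\{j_{q}:q\in Q\}$, which are subsets of $J$ of the same cardinality. The map $n\mapsto j_{n}$ is order preserving, so reading a point of $\Omega$ at the ordered coordinates $P$ is the same as reading the corresponding point of $\Omega_{\mathbf{g}}$ at the ordered coordinates $P'$. Thus $\Omega[P]=\Omega_{\mathbf{g}}[P']$, and likewise $\Omega[Q]=\Omega_{\mathbf{g}}[Q']$. The Ramsey conclusion then gives $\Omega[P]=\Omega[Q]$.

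There is no serious obstacle. The only point needing care is this index bookkeeping in the last step: $\Omega[P]$ is defined through the increasing enumeration of $P$, and the equality $\Omega[P]=\Omega_{\mathbf{g}}[P']$ holds because $n\mapsto j_n$ preserves order.
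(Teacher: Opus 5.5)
Your proposal is correct and follows the paper's proof essentially step for step. You take $\mathbf{g}$ from Proposition~\ref{prop:safe-sampling}, extract $J$ via Lemma~\ref{lem:Kamae-ramsey} with $m=k+1$, set $h_n=g_{j_n}$ and $\Omega=\Omega_{\mathbf{g}}[J]$, inherit (H$_1$), obtain (H$_2$) from Lemma~\ref{lem:tail-subsequence}, and transfer the Ramsey equality through the order-preserving reindexing $n\mapsto j_n$.
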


\begin{proof}
By Proposition \ref{prop:safe-sampling}, there exists a sequence $\mathbf{g}%
\ $satisfying $\mathrm{(H}_{1}\mathrm{)}$, whose associated family $\Omega _{%
\mathbf{g}}$ satisfies $\mathrm{(H}_{2}\mathrm{)}$ and is nonempty. Applying
Lemma \ref{lem:Kamae-ramsey} to $\Omega _{\mathbf{g}}$ with $I=\mathbb{N}$
and $m=k+1$, we obtain an infinite subset $J=\{j_{0}<j_{1}<\cdots \}\subset 
\mathbb{N}$ such that 
\begin{equation}
\Omega _{\mathbf{g}}[P^{\prime }]=\Omega _{\mathbf{g}}[Q^{\prime }]
\label{eq:Ramsey-J}
\end{equation}%
whenever $P^{\prime },Q^{\prime }\subset J$ are finite and $1\leq |P^{\prime
}|=|Q^{\prime }|\leq m.$

Set $h_{n}:=g_{j_{n}}$ $(n\in \mathbb{N})$ and $\Omega :=\Omega _{\mathbf{g}%
}[J]$. Then $h_{n}$ are pairwise distinct. Moreover, $\mathbf{h}%
:=(h_{n})_{n\geq 0}$ inherits $\mathrm{(H}_{1}\mathrm{)}$ from $\mathbf{g}$,
while $\Omega $ satisfies $\mathrm{(H}_{2}\mathrm{)}$ by Lemma \ref%
{lem:tail-subsequence}.

Let $P=\{p_{1}<\cdots <p_{s}\}$ and $Q=\{q_{1}<\cdots <q_{s}\}$ be finite
subsets of $\mathbb{N}$ with $1\leq s\leq k+1$. Then by (\ref{eq:Ramsey-J})
we have%
\begin{equation*}
\Omega \lbrack P]=\Omega _{\mathbf{g}}[\{j_{p_{1}},\ldots
,j_{p_{s}}\}]=\Omega _{\mathbf{g}}[\{j_{q_{1}},\ldots ,j_{q_{s}}\}]=\Omega
\lbrack Q].
\end{equation*}%
This proves the proposition.
\end{proof}

\bigskip

\section{Main results}

\subsection{The Abelian lower bound}

\ \thinspace

\label{subsec:lower bound}

We first derive a lower bound for the Abelian maximal pattern complexity
under condition (\textrm{FA}).

Assume that $x$ satisfies $(\mathrm{FA})$ and fix $k\geq 1$ throughout this
subsection. Let $\mathbf{h}=(h_{n})_{n\geq 0}$ and $\Omega $ be as in
Proposition \ref{prop:standard-family}. Thus $\mathbf{h}$ satisfies $\mathrm{%
(H}_{1}\mathrm{)}$, while $\Omega $ satisfies $\mathrm{(H}_{2}\mathrm{)}$
and (\ref{eq:H3-Ramsey})

For each pair $p<q$, define an undirected graph $\Gamma _{p,q}$ with vertex
set $\mathbb{A}$ by joining distinct $a,b\in \mathbb{A}$ whenever there
exists some $\omega \in \Omega $ such that 
\begin{equation*}
\{\omega _{p},\omega _{q}\}=\{a,b\}.
\end{equation*}

The following lemma is a two-dimensional analogue of Lemma 3.6 of \cite%
{KWZ2015}.

\begin{lemma}
\label{lem:letter-graph} For every $p<q$, the graph $\Gamma _{p,q}$ is
connected.
\end{lemma}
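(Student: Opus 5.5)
The plan is a proof by contradiction. A disconnection of $\Gamma_{p,q}$ would give a nonempty proper subset $B\subsetneq\mathbb{A}$ such that $h_q-h_p$ is a period of $\pi_B(x)$. That is exactly what property $\mathrm{(H}_1\mathrm{)}$ of the sequence $\mathbf{h}$ from Proposition~\ref{prop:standard-family} forbids. Neither $\mathrm{(H}_2\mathrm{)}$ nor the Ramsey property (\ref{eq:H3-Ramsey}) should be needed for this lemma.

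First, fix $p<q$ and suppose that $\Gamma_{p,q}$ is not connected. Let $B$ be the vertex set of one connected component. Then $B$ is nonempty and, since the graph is disconnected, $B\neq\mathbb{A}$; hence $\varnothing\neq B\subsetneq\mathbb{A}$. Because a component is closed under adjacency, no edge of $\Gamma_{p,q}$ joins $B$ to $\mathbb{A}\setminus B$.

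Next, I claim that for every $\omega\in\Omega$ we have $\omega_p\in B$ if and only if $\omega_q\in B$. If $\omega_p=\omega_q$ this is trivial. Otherwise $\{\omega_p,\omega_q\}$ is an edge of $\Gamma_{p,q}$ by definition, so $\omega_p$ and $\omega_q$ lie in the same component, and the equivalence again holds.

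Now I unwind the definition $\Omega=\{(y(h_n))_{n\ge0}:y\in X_x\}$ and apply the claim to $y=T^t x\in X_x$ for an arbitrary $t\in\mathbb{Z}^2$. This gives $x(t+h_p)\in B$ if and only if $x(t+h_q)\in B$, that is,
\[
\pi_B(x)(t+h_p)=\pi_B(x)(t+h_q)\quad\text{for every }t\in\mathbb{Z}^2 .
\]
Substituting $s=t+h_p$ shows $\pi_B(x)(s+(h_q-h_p))=\pi_B(x)(s)$ for all $s\in\mathbb{Z}^2$. Hence $h_q-h_p\in \func{Per}(\pi_B(x))=H_B$; this vector is nonzero because the $h_n$ are pairwise distinct. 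This contradicts $\mathrm{(H}_1\mathrm{)}$ for the pair $p<q$, so $\Gamma_{p,q}$ is connected.

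I do not expect any step to be a real obstacle. The only point needing care is that the equivalence must hold for all shifts $T^t x$ and not merely for $x$ itself. That is automatic because $\Omega$ is defined over the whole orbit closure $X_x$, which contains every $T^t x$.
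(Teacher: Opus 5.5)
Your proof is correct and matches the paper's argument essentially step for step. You take a connected component $B$ and use $T^t x\in X_x$ to get $\mathbf{1}_B(x(t+h_p))=\mathbf{1}_B(x(t+h_q))$ for all $t$, which yields $h_q-h_p\in H_B$ and so contradicts $\mathrm{(H}_1\mathrm{)}$; your case check on $\omega_p=\omega_q$ and the nonzero remark simply make explicit what the paper leaves implicit.
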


\begin{proof}
Suppose that $\Gamma _{p,q}$ is disconnected, and let $B$ be the vertex set
of one connected component. Then $\varnothing \neq B\subsetneq \mathbb{A}$.
By the definition of $\Gamma _{p,q}$, for every $y\in X_{x}$, one has%
\begin{equation}
\mathbf{1}_{B}(y(h_{p}))=\mathbf{1}_{B}(y(h_{q})).  \label{eq:B-membership}
\end{equation}%
Since $T^{t}x\in X_{x}$ for every $t\in \mathbb{Z}^{2}$, (\ref%
{eq:B-membership}) implies 
\begin{equation}
\mathbf{1}_{B}(x(t+h_{p}))=\mathbf{1}_{B}(x(t+h_{q})).  \label{eq:B-menber-t}
\end{equation}%
Let $d:=h_{q}-h_{p}$. As $t+h_{p}$ ranges over $\mathbb{Z}^{2}$, it follows
from (\ref{eq:B-menber-t}) that 
\begin{equation*}
\pi _{B}(x)(z+d)=\pi _{B}(x)(z)\text{\ for all }z\in \mathbb{Z}^{2}.
\end{equation*}%
Hence $d\in H_{B}$, which contradicts $\mathrm{(H}_{1}\mathrm{)}$.
Therefore, $\Gamma _{p,q}$ is connected.
\end{proof}

The next three combinatorial lemmas adapt the strategy of \cite{KWZ2015} to
the present setting.

We associate with $\Omega $ the undirected graph $G=(\mathbb{A},\mathcal{E})$%
, where for distinct $a,b\in \mathbb{A}$,%
\begin{equation*}
\{a,b\}\in \mathcal{E}\text{ if and only if }a^{k}b^{\infty }\in \Omega 
\text{\ or\ }b^{k}a^{\infty }\in \Omega .
\end{equation*}

\begin{lemma}
\label{lem:tail-graph} The graph $G$ is connected.
\end{lemma}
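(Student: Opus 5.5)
The plan is to show that every edge of $\Gamma_{k-1,k}$ is also an edge of $G$. Since Lemma~\ref{lem:letter-graph} says $\Gamma_{k-1,k}$ is connected and both graphs have vertex set $\mathbb{A}$, this gives connectivity of $G$.

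Fix an edge $\{a,b\}$ of $\Gamma_{k-1,k}$. Then there is $\omega\in\Omega$ with $\{\omega_{k-1},\omega_k\}=\{a,b\}$, and by symmetry we may assume $\omega_{k-1}=a$ and $\omega_k=b$. First apply $\mathrm{(H}_2\mathrm{)}$ at coordinate $k$, which gives
\[
\omega':=\omega_0\cdots\omega_{k-2}\,a\,b^{\infty}\in\Omega .
\]
The goal is to replace the prefix $\omega_0\cdots\omega_{k-2}$ by $a^{k-1}$ while keeping the tail $b^\infty$.

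For this, use the Ramsey property (\ref{eq:H3-Ramsey}) with $P=\{k-1,k\}$ and $Q=\{0,k\}$; both have size $2\le k+1$. This yields some $\nu\in\Omega$ with $\nu_0=a$ and $\nu_k=b$, but it does not control the intermediate coordinates. To get a cleaner route, use (\ref{eq:H3-Ramsey}) with larger patterns. Apply $\mathrm{(H}_2\mathrm{)}$ to some $\mu\in\Omega$ with $\mu_{k-1}=a$, at coordinate $k-1$, to get $\mu_0\cdots\mu_{k-2}a^\infty\in\Omega$. Reading this word on the positions $P=\{N,N+1,\dots,N+k-1,N+k\}$ with $N\ge k-1$ gives the constant word $a^{k+1}$. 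That only shows constant words are attainable and misses the transition from $a$ to $b$, so a different combination is needed.

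The transition comes from $\omega'$ itself. On $P=\{k-1,k,k+1,\dots,2k-1\}$, which has size $k+1$, the word $\omega'$ reads $a\,b^{k}$. By (\ref{eq:H3-Ramsey}) applied with $Q=\{0,\dots,k\}$, there is $\nu\in\Omega$ with $\nu_0\cdots\nu_k=a\,b^k$. This gives a single $a$ followed by $b$'s, not yet $a^kb^\infty$.

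The remaining task, and the main obstacle, is to get a block of $k$ copies of $a$ before the switch. The trick is to pick $Q$ with the right repeated-value structure: take the set $Q=\{0,\dots,k\}$ reading $a\,b^k$ in $\nu$. I still need a word reading $a^k b$ on $k+1$ coordinates. For this, choose a word with many $a$'s followed by $b$'s, namely $\omega'$ when $\omega_0=\cdots=\omega_{k-2}=a$. Since that assumption is not available, instead use the reversed orientation. Apply the same reasoning to an $\omega$ with $\omega_{k-1}=b$ and $\omega_k=a$. To handle this, first show that $\Gamma_{0,1}$ has an oriented edge $(a,b)$; then, using $\mathrm{(H}_2\mathrm{)}$ at coordinate $1$, obtain $a\,b^\infty$. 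Then use Ramsey on $P=\{0\}\cup\{1,\dots,k\}$ versus patterns positioned so that the single $a$ reading is pushed to position $k-1$. Iterating this $k$ times, shifting the $a$-block with (\ref{eq:H3-Ramsey}) and reapplying $\mathrm{(H}_2\mathrm{)}$ each time to re-freeze the tail, should produce $a^k b^\infty$. Making this induction precise, that is, building $a^j b^\infty\in\Omega$ from $a^{j-1}b^\infty\in\Omega$ together with connectivity data, is the step I expect to be hardest. If it fails for a given orientation, the fallback is to obtain $b^ka^\infty$ instead, which is also enough to make $\{a,b\}$ an edge of $G$.
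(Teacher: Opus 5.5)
Your proof has a genuine gap, and it sits exactly at the step you flag. The target you chose is also stronger than your tools can deliver.

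\textbf{Why the proposed iteration fails.} Every word you actually produce has a single $a$ before the switch to $b$, for example $\omega_0\cdots\omega_{k-2}\,a\,b^\infty$, or $\nu$ with prefix $ab^k$. The property \eqref{eq:H3-Ramsey} only lets you realize, on a new set of positions of the same size, a word you have already read somewhere; it gives no control over the other coordinates. The property $\mathrm{(H}_2\mathrm{)}$ only freezes a tail. So to obtain $a^kb$ on $k+1$ positions you must already have a sequence containing $a^kb$ as a scattered subword, which is precisely what is missing. Your ``shifting the $a$-block'' iteration therefore never produces $a^2b$, let alone $a^kb^\infty$.

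\textbf{The edge-inclusion claim does not follow.} The statement that every edge of $\Gamma_{k-1,k}$ is an edge of $G$ is not a consequence of $\mathrm{(H}_2\mathrm{)}$, \eqref{eq:H3-Ramsey} and Lemma~\ref{lem:letter-graph}. Write the edge as $\{a,c\}$, take $k\ge 2$, and let
\[
\Omega=\{b^\infty\}\cup\{b^iac^\infty,\ b^ia^\infty,\ b^ic^\infty:\ i\ge 0\}.
\]
\begin{itemize}
\item This set is closed and satisfies $\mathrm{(H}_2\mathrm{)}$.
\item For $|P|=s$ one has $\Omega[P]=\{b^ja^{s-j},\,b^jc^{s-j}:0\le j\le s\}\cup\{b^jac^{s-j-1}:0\le j<s\}$. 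This depends only on $s$, so \eqref{eq:H3-Ramsey} holds at every level.
\item Every $\Gamma_{p,q}$ is the full triangle on $\{a,b,c\}$.
\item Yet $a^kc^\infty\notin\Omega$ and $c^ka^\infty\notin\Omega$.
\end{itemize}
So $\{a,c\}$ is an edge of $\Gamma_{k-1,k}$ but not of $G$, and your fallback fails as well. Here $a$ and $c$ are connected in $G$ only through the third letter $b$.

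\textbf{The missing idea.} You need a way to manufacture $k$ equal letters before the transition. The price is that the repeated letter may be a third letter, so you should aim only for connectivity of the endpoints, not edge inclusion.
\begin{enumerate}
\item Start from $(a,c)\in\Omega[\{0,1\}]$ and use \eqref{eq:H3-Ramsey} at level $2$ to move the transition far out: $(a,c)\in\Omega[\{kr,kr+1\}]$.
\item So some $\eta\in\Omega$ has $\eta_{kr}=a$ and $\eta_{kr+1}=c$. Applying $\mathrm{(H}_2\mathrm{)}$ gives $\xi ac^\infty,\ \xi a^\infty\in\Omega$ with $\xi=\eta_0\cdots\eta_{kr-1}$.
\item Since $|\xi|=kr$, by pigeonhole some letter $d$ occurs in $\xi$ at positions $s_1<\cdots<s_k$.
\item Reading $\xi ac^\infty$ on $\{s_1,\dots,s_k,kr+1\}$ and on $\{s_1,\dots,s_k,kr\}$ gives $d^kc$ and $d^ka$.
\item By \eqref{eq:H3-Ramsey} at level $k+1$ these move to $\{0,\dots,k\}$. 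Then $\mathrm{(H}_2\mathrm{)}$ at coordinate $k$ yields $d^kc^\infty,\,d^ka^\infty\in\Omega$.
\item If $d\in\{a,c\}$, then $\{a,c\}\in\mathcal E$. Otherwise $\{d,a\},\{d,c\}\in\mathcal E$.
\end{enumerate}
In either case $a$ and $c$ are joined in $G$. Combined with the connectivity of $\Gamma_{0,1}$ from Lemma~\ref{lem:letter-graph}, this proves that $G$ is connected; this is how the paper proceeds.
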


\begin{proof}
It is enough to show that the endpoints of every edge of $\Gamma _{0,1}$ are
connected in $G$. Fix $\{a,c\}\in E(\Gamma _{0,1})$. After interchanging $a$
and $c$ if necessary, we can assume that 
\begin{equation*}
(a,c)\in \Omega \lbrack \{0,1\}].
\end{equation*}%
Let $\sharp \mathbb{A}=r$. By (\ref{eq:H3-Ramsey}) at level $2$, we have 
\begin{equation*}
(a,c)\in \Omega \lbrack \{kr,kr+1\}]\text{ for all }k\in \mathbb{N}.
\end{equation*}%
Take $\eta \in \Omega $ with $\eta _{kr}=a$ and $\eta _{kr+1}=c$, and write $%
\xi =\eta _{0}\eta _{1}\cdots \eta _{kr-1}.$ Applying $\mathrm{(H}_{2}%
\mathrm{)}$ to $\eta $ at the coordinates $kr+1$ and $kr$, respectively,
gives 
\begin{equation*}
\xi ac^{\infty },\xi a^{\infty }\in \Omega .
\end{equation*}

Note that $\xi $ has length $kr$, it follows that there exists a letter $%
b\in \mathbb{A}$ occurs at least $k$ times in $\xi $. We assume that 
\begin{equation*}
0\leq s_{1}<\cdots <s_{k}<kr,\text{\ }\xi (s_{i})=b.
\end{equation*}%
Hence 
\begin{equation*}
b^{k}a\in \Omega \lbrack \{s_{1},\ldots ,s_{k},kr\}].
\end{equation*}%
By (\ref{eq:H3-Ramsey}) at level $k+1$, one has 
\begin{equation*}
b^{k}a\in \Omega \lbrack \{0,1,\ldots ,k\}].
\end{equation*}%
Thus there exists $\zeta \in \Omega $ whose prefix of length $k+1$ is $%
b^{k}a.$ Apply $\mathrm{(H}_{2}\mathrm{)}$ to $\zeta $\ at coordinate $k$,
we obtain that 
\begin{equation}
b^{k}a^{\infty }\in \Omega .  \label{eq:bka}
\end{equation}%
Applying the same argument to $\xi ac^{\infty },$ with $kr+1$ in place of $%
kr,$ we obtain%
\begin{equation}
b^{k}c^{\infty }\in \Omega .  \label{eq:bkc}
\end{equation}

If $b=a$ or $b=c$, then $\{a,c\}\in \mathcal{E}$ by (\ref{eq:bkc}) or (\ref%
{eq:bka}), respectively. Otherwise,\ we have%
\begin{equation*}
\{a,b\},\{b,c\}\in \mathcal{E},
\end{equation*}%
then $a\ $and $c$ are connected in $G$. Thus the endpoints of every edge of $%
\Gamma _{0,1}$ are connected in $G$. Since $\Gamma _{0,1}$ is connected by
Lemma \ref{lem:letter-graph}, $G$ is connected.
\end{proof}

For $a\in \mathbb{A}$, let $e_{a}\in \mathbb{N}^{\mathbb{A}}$ denote the $a$%
-th standard basis vector. For distinct $a,b\in \mathbb{A}$, define 
\begin{equation*}
V_{a,b}:=\{ie_{a}+(k-i)e_{b}:0\leq i\leq k\}.
\end{equation*}

\begin{lemma}
\label{lem:parikh-segment} If $\{a,b\}\in \mathcal{E}$, then 
\begin{equation*}
V_{a,b}\subset \func{Par}\{\Omega \lbrack \{0,1,\ldots ,k-1\}]\}.
\end{equation*}
\end{lemma}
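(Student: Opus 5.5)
Without loss of generality (swapping the roles of $a$ and $b$, since $V_{a,b}=V_{b,a}$), we may assume $a^{k}b^{\infty }\in \Omega $. The plan is to produce, for each $0\leq i\leq k$, a word in $\Omega \lbrack \{0,1,\ldots ,k-1\}]$ with exactly $i$ occurrences of $a$ and $k-i$ occurrences of $b$. The natural source is the word $\omega :=a^{k}b^{\infty }$ read on a window of $k$ consecutive coordinates, combined with the Ramsey property (\ref{eq:H3-Ramsey}) to move that window back to $\{0,\ldots ,k-1\}$.

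Fix $0\leq i\leq k$ and consider $P_{i}:=\{k-i,k-i+1,\ldots ,2k-i-1\}$. This is a set of $k$ consecutive indices, of which $k-i,\ldots ,k-1$ (that is, $i$ indices) lie in the $a$-block of $\omega $ and $k,\ldots ,2k-i-1$ (that is, $k-i$ indices) lie in the $b$-tail. Hence
\begin{equation*}
a^{i}b^{k-i}=(\omega _{p})_{p\in P_{i}}\in \Omega \lbrack P_{i}].
\end{equation*}
Since $|P_{i}|=k\leq k+1$, (\ref{eq:H3-Ramsey}) gives $\Omega \lbrack P_{i}]=\Omega \lbrack \{0,1,\ldots ,k-1\}]$. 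Therefore $a^{i}b^{k-i}\in \Omega \lbrack \{0,\ldots ,k-1\}]$, and its Parikh vector is $ie_{a}+(k-i)e_{b}$. Letting $i$ range over $0,\ldots ,k$ yields all of $V_{a,b}$. The endpoint cases are covered by the same formula: $i=k$ gives $P_{k}=\{0,\ldots ,k-1\}$ directly, and $i=0$ gives $P_{0}=\{k,\ldots ,2k-1\}$, which reads $b^{k}$.

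There is essentially no obstacle here. The only points to check are the index bookkeeping and that the Ramsey equality is available at size $k$, which it is because Proposition \ref{prop:standard-family} was applied with $m=k+1$. Property $\mathrm{(H}_{2}\mathrm{)}$ is not needed in this step.
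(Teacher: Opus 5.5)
Your proof is correct and follows the paper's argument: both start from $a^{k}b^{\infty}\in\Omega$, choose a $k$-element index set on which this word reads $a^{i}b^{k-i}$, and use the Ramsey equality (\ref{eq:H3-Ramsey}) at level $k$ to move it to $\{0,1,\ldots,k-1\}$. The only difference is cosmetic: the paper takes $P_{i}=\{0,\ldots,i-1\}\cup\{k,\ldots,2k-i-1\}$, whereas you take the consecutive window $\{k-i,\ldots,2k-i-1\}$.
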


\begin{proof}
By symmetry, assume that $a^{k}b^{\infty }\in \Omega $. For each $0\leq
i\leq k,$ set 
\begin{equation*}
P_{i}:=\{0,1,\ldots ,i-1\}\cup \{k,k+1,\ldots ,2k-i-1\}.
\end{equation*}%
Then $|P_{i}|=k$ and $(a^{k}b^{\infty })[P_{i}]=a^{i}b^{k-i}.$ By (\ref%
{eq:H3-Ramsey}), one has%
\begin{equation*}
a^{i}b^{k-i}\in \Omega \lbrack P_{i}]=\Omega \lbrack \{0,1,\ldots ,k-1\}],
\end{equation*}%
and therefore 
\begin{equation*}
ie_{a}+(k-i)e_{b}\in \func{Par}\{\Omega \lbrack \{0,1,\ldots ,k-1\}]\}.
\end{equation*}%
Since $i$ is arbitrary, the result follows.
\end{proof}

\begin{lemma}
\label{lem:tree-count} Let $T$ be a tree with vertex set $\mathbb{A}$. Then 
\begin{equation*}
\#\bigcup_{\{a,b\}\in E(T)}V_{a,b}=(r-1)k+1.
\end{equation*}
\end{lemma}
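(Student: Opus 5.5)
The plan is to count the union by induction on the number of vertices, removing a leaf each time. First record the basic intersection facts about the segments $V_{a,b}$. Each $V_{a,b}$ has exactly $k+1$ elements, since its points $ie_{a}+(k-i)e_{b}$ for $0\le i\le k$ are distinct. All of them lie in the simplex $\{v\in \mathbb{N}^{\mathbb{A}}:\sum_{c}v_{c}=k\}$. The support of a point of $V_{a,b}$ is contained in $\{a,b\}$, and it is all of $\{a,b\}$ exactly when $1\le i\le k-1$. The two endpoints are $ke_{a}$ and $ke_{b}$.

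For the induction, fix a tree $T$ on $\mathbb{A}$ with $r=\#\mathbb{A}\geq 2$. The base case $r=2$ is a single edge, so the union is $V_{a,b}$, which has $k+1=(r-1)k+1$ elements. For $r\geq 3$, pick a leaf $c$ of $T$ and let $d$ be its unique neighbour. Let $T^{\prime }=T-c$, a tree on $\mathbb{A}\setminus \{c\}$. By the induction hypothesis, applied to the segments of $T^{\prime }$ regarded as subsets of $\mathbb{N}^{\mathbb{A}}$ (the extra coordinate is zero, which does not affect cardinality), the union $W^{\prime }$ over $E(T^{\prime })$ has $(r-2)k+1$ elements.

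It then suffices to show $\#(V_{c,d}\setminus W^{\prime })=k$, equivalently $V_{c,d}\cap W^{\prime }=\{ke_{d}\}$.
\begin{itemize}
\item Every point of $W^{\prime }$ has $c$-coordinate $0$, because $c$ is not an endpoint of any edge of $T^{\prime }$.
\item The points of $V_{c,d}$ with $c$-coordinate $0$ form only $\{ke_{d}\}$ (the case $i=0$).
\item Conversely, $ke_{d}\in W^{\prime }$, since $d$ has some neighbour in $T^{\prime }$ (as $T^{\prime }$ has at least two vertices) and $ke_{d}$ is an endpoint of the corresponding segment.
\end{itemize}
Hence the union over $E(T)$ has $(r-2)k+1+k=(r-1)k+1$ elements, which completes the induction.

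There is no serious obstacle; the only point needing care is confirming $ke_{d}\in W^{\prime }$, which uses $r\geq 3$ so that $T^{\prime }$ is a nontrivial tree in which $d$ has positive degree. An alternative non-inductive check gives the same count: the union consists of the $r$ vertices $ke_{a}$ together with the $k-1$ interior points of each of the $r-1$ edges. These interior points are pairwise disjoint across edges because their supports are exactly the edge's endpoint pairs. The total is therefore $r+(r-1)(k-1)=(r-1)k+1$.
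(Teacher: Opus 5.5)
Your proof is correct. Your main argument is induction by deleting a leaf $c$ with neighbour $d$, checking $V_{c,d}\cap W'=\{ke_{d}\}$ via the $c$-coordinate. That is a different route from the paper's.

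The paper's proof is exactly the non-inductive check you sketch at the end. It splits the union into the $r$ pure vectors $ke_{a}$, which are all present because every vertex of a tree on $r\geq 2$ vertices lies on some edge. The rest are the $(r-1)(k-1)$ internal vectors, which are distinct across edges because each has support exactly its edge.

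The direct count is shorter. It avoids re-embedding the segments of $T'$ into $\mathbb{N}^{\mathbb{A}}$ and stating the induction hypothesis over an arbitrary alphabet of size $r-1$. Your induction instead makes explicit that each edge attached at a leaf contributes exactly $k$ new Parikh vectors. This is the same support observation, organized incrementally.

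Both versions implicitly use $r\geq 2$: in your base case, and in the paper's claim that every vertex is incident to an edge. This holds in the paper's setting.
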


\begin{proof}
For an edge $\{a,b\}$, the $k-1$ internal vectors 
\begin{equation*}
ie_{a}+(k-i)e_{b}\text{\ }(1\leq i\leq k-1)
\end{equation*}%
have positive support exactly $\{a,b\}$. Hence internal vectors arising from
distinct edges are distinct. Since a tree on $r$ vertices has $r-1$ edges,
there are $(r-1)(k-1)$ internal vectors in total. Moreover, for every vertex 
$a\in \mathbb{A}$, the pure vector $ke_{a}$ occurs because $a$ is incident
to some edge. The $r$ pure vectors are pairwise distinct and are distinct
from all internal vectors. Thus 
\begin{equation*}
\#\bigcup_{\{a,b\}\in E(T)}V_{a,b}=(r-1)(k-1)+r=(r-1)k+1.
\end{equation*}
\end{proof}

\begin{theorem}
\label{thm:FA-lower} If $x$ satisfies $(\mathrm{FA})$, then 
\begin{equation*}
p_{x}^{\ast \mathrm{ab}}(k)\geq (r-1)k+1\text{\ for each positive integer }k.
\end{equation*}
\end{theorem}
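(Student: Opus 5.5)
The plan is to assemble the combinatorial lemmas of this subsection into a single pattern and count its Abelian classes. Fix $k\geq 1$. Let $\mathbf{h}=(h_n)_{n\geq 0}$ and $\Omega$ be as in Proposition \ref{prop:standard-family}, so that (H$_1$), (H$_2$) and (\ref{eq:H3-Ramsey}) hold. The pattern we use is
\begin{equation*}
S:=\{h_0,h_1,\ldots,h_{k-1}\}\in\Sigma_k(\mathbb{Z}^2),
\end{equation*}
which indeed has $k$ elements since the $h_n$ are pairwise distinct.

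The first step compares $S$-factors of $x$ with $\Omega[\{0,\ldots,k-1\}]$. Every $S$-factor $x[t+S]$ corresponds to the word $((T^tx)(h_0),\ldots,(T^tx)(h_{k-1}))$, and this word lies in $\Omega[\{0,\ldots,k-1\}]$ because $T^tx\in X_x$. Conversely, take $\omega\in\Omega$, so $\omega$ comes from some $y\in X_x$. By Lemma \ref{lem:realize} applied with $F=S$, there is $t$ with $(T^tx)(h_m)=y(h_m)$ for all $m<k$. Hence the set of words read off the $S$-factors, in the order $h_0,\ldots,h_{k-1}$, equals $\Omega[\{0,\ldots,k-1\}]$. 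Taking Parikh vectors gives
\begin{equation*}
p_x^{\mathrm{ab}}(S)=\#\func{Par}\{\Omega[\{0,\ldots,k-1\}]\}.
\end{equation*}

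The second step bounds this count from below. By Lemma \ref{lem:tail-graph}, the graph $G=(\mathbb{A},\mathcal{E})$ is connected, so it contains a spanning tree $T$ with vertex set $\mathbb{A}$. For each edge $\{a,b\}\in E(T)\subset\mathcal{E}$, Lemma \ref{lem:parikh-segment} gives $V_{a,b}\subset\func{Par}\{\Omega[\{0,\ldots,k-1\}]\}$. Then Lemma \ref{lem:tree-count} yields
\begin{equation*}
p_x^{\ast\mathrm{ab}}(k)\geq p_x^{\mathrm{ab}}(S)\geq\#\bigcup_{\{a,b\}\in E(T)}V_{a,b}=(r-1)k+1.
\end{equation*}
The only remaining point is $r\geq 2$, which is needed so that $T$ has edges covering every vertex; this holds by the standing hypothesis $\#\mathbb{A}=r\geq 2$.

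The main obstacle is the identification in the first step: the map from $S$-factors to the projection of $\Omega$ onto the first $k$ coordinates must be a bijection on words, not merely an inclusion in one direction. Only the inclusion $\Omega[\{0,\ldots,k-1\}]\subseteq\{S\text{-factors}\}$ is needed for the lower bound, and that inclusion is exactly what Lemma \ref{lem:realize} provides. All the genuinely hard work, namely connectivity of $G$ and the Ramsey extraction, was already done in the preceding lemmas, so the proof should be short.
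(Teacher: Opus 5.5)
Your proposal is correct and matches the paper's proof step for step. It uses the same pattern $\{h_0,\ldots,h_{k-1}\}$, identifies its Abelian classes with $\func{Par}\{\Omega[\{0,\ldots,k-1\}]\}$ via Lemma~\ref{lem:realize}, and counts $(r-1)k+1$ Parikh vectors using a spanning tree of $G$ and Lemmas~\ref{lem:tail-graph}--\ref{lem:tree-count}; your observation that only the inclusion of $\Omega[\{0,\ldots,k-1\}]$ into the words read off $S$-factors is needed is a fair sharpening of the paper's equality.
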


\begin{proof}
Fix $k\geq 1$, and let $(h_{n})_{n\geq 0}$ and $\Omega $ be given by
Proposition \ref{prop:standard-family}. By Lemma \ref{lem:tail-graph}, the
associated graph $G$ of $\Omega $ is connected. Take a spanning tree $T$ of $%
G$. By Lemmas \ref{lem:parikh-segment}--\ref{lem:tree-count}, we have%
\begin{equation}
\#\func{Par}\{\Omega \lbrack \{0,1,\ldots ,k-1\}]\}\geq (r-1)k+1.
\label{eq:Omega-parikh-lower}
\end{equation}

Let%
\begin{equation*}
F:=\{h_{0},h_{1},\ldots ,h_{k-1}\}\subset \mathbb{Z}^{2}.
\end{equation*}%
It follows from the definition of $\Omega $ and Lemma \ref{lem:realize} that 
\begin{equation*}
\Omega \lbrack \{0,1,\ldots ,k-1\}]=\{((T^{t}(x))(h_{0}),\ldots
,(T^{t}(x))(h_{k-1})):t\in \mathbb{Z}^{2}\}.
\end{equation*}%
By taking Parikh vectors, we obtain that%
\begin{equation*}
\func{Par}\{\Omega \lbrack \{0,1,\ldots ,k-1\}]\}=\{\func{Par}%
_{F}(T^{t}x):t\in \mathbb{Z}^{2}\}.
\end{equation*}%
Therefore, we have 
\begin{equation*}
p_{x}^{\mathrm{ab}}(F)=\#\func{Par}\{\Omega \lbrack \{0,1,\ldots ,k-1\}]\}.
\end{equation*}%
Combining this with (\ref{eq:Omega-parikh-lower}), one has 
\begin{equation*}
p_{x}^{\ast \mathrm{ab}}(k)\geq p_{x}^{\mathrm{ab}}(F)\geq (r-1)k+1.
\end{equation*}
\end{proof}

\subsection{Proofs of the main results}

\label{subsec:main}

\begin{proof}[Proof of Theorem \protect\ref{thm:main}]
Under assumption (i), Lemma \ref{lem:transverse-FA} implies that $x$
satisfies $(\mathrm{FA})$, while under assumption (ii), the same conclusion
follows from Proposition \ref{prop:strong-FA}. Thus, in either case, $x$
satisfies $(\mathrm{FA})$, and the conclusion follows from Theorem \ref%
{thm:FA-lower}.
\end{proof}

\begin{proposition}
\label{prop:periodic-upper}If $x$ is doubly periodic, then 
\begin{equation*}
p_{x}^{\ast \mathrm{ab}}(k)\leq \lbrack \mathbb{Z}^{2}:\func{Per}(x)]\text{\
for all }k\geq 1.
\end{equation*}
\end{proposition}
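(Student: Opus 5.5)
The plan is to show that for any pattern $S$, the $S$-factor $x[t+S]$ depends only on the coset $t+\func{Per}(x)$. Set $H:=\func{Per}(x)$. Since $x$ is doubly periodic, $H$ contains two linearly independent vectors. A rank-two subgroup of $\mathbb{Z}^{2}$ has finite index, so $N:=[\mathbb{Z}^{2}:H]<\infty$.

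The first step is to check that $H\cup\{0\}$ is a subgroup. The definition of $\func{Per}(x)$ excludes $0$, but adjoining it gives a set closed under sums and negatives: if $x(t+p)=x(t)$ for all $t$, then substituting $t-p$ for $t$ gives $x(t-p)=x(t)$. We work with this subgroup.

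Fix $k\geq 1$ and $S\in\Sigma_{k}(\mathbb{Z}^{2})$, and let $t,t'\in\mathbb{Z}^{2}$ satisfy $t'-t\in H$. Then for every $s\in S$ we have $x(t'+s)=x(t+s+(t'-t))=x(t+s)$, so $x[t'+S]=x[t+S]$. Hence the map $t\mapsto x[t+S]$ factors through $\mathbb{Z}^{2}/H$, and therefore $p_{x}(S)=\#\mathcal{F}_{x}(S)\leq N$.

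Passing to Abelian classes cannot increase the count, so $p_{x}^{\mathrm{ab}}(S)\leq p_{x}(S)\leq N$. Taking the supremum over all $S\in\Sigma_{k}(\mathbb{Z}^{2})$ gives $p_{x}^{\ast\mathrm{ab}}(k)\leq N$ for all $k\geq 1$.

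There is no real obstacle in this argument. The only point needing care is the finiteness of the index: two independent periods $p_{1},p_{2}$ generate a lattice $\mathbb{Z}p_{1}+\mathbb{Z}p_{2}\leq H$ of index $|\det(p_{1},p_{2})|$, so $[\mathbb{Z}^{2}:H]\leq|\det(p_{1},p_{2})|<\infty$.
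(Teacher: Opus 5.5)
Your proof is correct and is essentially the paper's argument: the paper also observes that $t\mapsto \func{Par}_F(T^t x)$ is constant on cosets of $\func{Per}(x)$ and so takes at most $[\mathbb{Z}^2:\func{Per}(x)]$ values, while you first bound the full $S$-factors and then use $p_x^{\mathrm{ab}}(S)\le p_x(S)$. Your remarks on adjoining $0$ to $\func{Per}(x)$ and on the finiteness of the index via $|\det(p_1,p_2)|$ fill in details the paper leaves implicit.
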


\begin{proof}
Let $F$ be a finite subset of $\mathbb{Z}^{2}$ and write $M:=$ $[\mathbb{Z}%
^{2}:\func{Per}(x)]$. If $t-s\in \func{Per}(x)$, then $T^{t}x=T^{s}x$, and
consequently 
\begin{equation*}
\func{Par}_{F}(T^{t}x)=\func{Par}_{F}(T^{s}x).
\end{equation*}%
Thus the map $t\mapsto \func{Par}_{F}(T^{t}x)$ is constant on cosets of $%
\func{Per}(x)$, and hence has at most $M$ values. Therefore, we have $p_{x}^{%
\mathrm{ab}}(F)\leq M$. Taking the supremum over all $F$ with $|F|=k,$ one
has $p_{x}^{\ast \mathrm{ab}}(k)\leq M$\ for all $k\geq 1.$
\end{proof}

\begin{theorem}
\label{thm:binary-dichotomy} Let $x\in \{0,1\}^{\mathbb{Z}^{2}}$ be strongly
recurrent. Then exactly one of the following alternatives holds.

\begin{enumerate}
\item[(i)] $x$ is doubly periodic, in which case 
\begin{equation*}
p_{x}^{\ast \mathrm{ab}}(k)\leq \lbrack \mathbb{Z}^{2}:\func{Per}(x)]\text{
for all positive integer }k;
\end{equation*}

\item[(ii)] $x$ is not doubly periodic, in which case 
\begin{equation*}
p_{x}^{\ast \mathrm{ab}}(k)=k+1\text{ for all positive integer }k.
\end{equation*}
\end{enumerate}
\end{theorem}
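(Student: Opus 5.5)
The statement is a dichotomy, so the plan is to treat the two alternatives separately; they are mutually exclusive by definition. In case (i), $x$ is doubly periodic, so $\func{Per}(x)$ has rank two and finite index in $\mathbb{Z}^{2}$. The bound $p_{x}^{\ast \mathrm{ab}}(k)\leq [\mathbb{Z}^{2}:\func{Per}(x)]$ is then exactly Proposition \ref{prop:periodic-upper}, and nothing more is needed there.

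In case (ii), $x$ is strongly recurrent and not doubly periodic. The upper bound $p_{x}^{\ast \mathrm{ab}}(k)\leq k+1$ is the binary estimate (\ref{eq:binary-upper}). For the lower bound I would apply Theorem \ref{thm:main} under hypothesis (ii) with $r=2$, which gives $p_{x}^{\ast \mathrm{ab}}(k)\geq (2-1)k+1=k+1$. For this I must check that $x$ is non-doubly periodic by projection.

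Over $\mathbb{A}=\{0,1\}$, the only nonempty proper subsets are $B=\{0\}$ and $B=\{1\}$. The word $\pi_{\{1\}}(x)=\mathbf{1}_{\{1\}}\circ x$ is literally $x$ under the identification of $\{0,1\}$ with itself. The word $\pi_{\{0\}}(x)$ is $1-x$, and it has the same periods as $x$; this is also the relation $H_{B}=H_{\mathbb{A}\setminus B}$ noted in Section \ref{sec:prelim}. Hence $H_{B}=\func{Per}(x)$ for both choices of $B$. Since $x$ is not doubly periodic, $\func{rank}H_{B}\leq 1$ for each $B$, so $x$ is non-doubly periodic by projection. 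Combining the two bounds gives $p_{x}^{\ast \mathrm{ab}}(k)=k+1$ for every $k\geq 1$.

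The argument is short, and the only point that needs care is the observation that projections of binary words add no new periodicity. This is what lets the hypothesis of Theorem \ref{thm:main} collapse to plain non-double periodicity. Theorem \ref{thm:intro-binary-dichotomy} then follows at once: in case (i) $p_{x}^{\ast \mathrm{ab}}$ is bounded, while in case (ii) it equals $k+1$ and so is unbounded.
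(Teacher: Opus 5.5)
Your proposal is correct and follows the paper's proof essentially step for step. In case (i) you apply Proposition \ref{prop:periodic-upper}. In case (ii) you observe that $H_{\{0\}}=H_{\{1\}}=\func{Per}(x)$ has rank at most one, so Theorem \ref{thm:main}(ii) with $r=2$ gives $p_{x}^{\ast \mathrm{ab}}(k)\geq k+1$, and the binary bound (\ref{eq:binary-upper}) supplies the matching upper bound.
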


\begin{proof}
If $x$ is doubly periodic, then (i) follows immediately from Proposition \ref%
{prop:periodic-upper}.

Suppose now that $x$ is not doubly periodic. Since $\mathbb{A}=\{0,1\},$ the
two nonempty proper subsets of $\mathbb{A}$ are complementary, and hence 
\begin{equation*}
H_{\{0\}}=H_{\{1\}}=\func{Per}(x).
\end{equation*}%
Since $x$ is not doubly periodic, $\func{rank}\func{Per}(x)\leq 1$. Thus $x$
is non-doubly periodic by projection. By (ii) of Theorem \ref{thm:main}, one
has 
\begin{equation*}
p_{x}^{\ast \mathrm{ab}}(k)\geq k+1.
\end{equation*}%
Together with the binary upper bound (\ref{eq:binary-upper}), we have 
\begin{equation*}
p_{x}^{\ast \mathrm{ab}}(k)=k+1\text{\ for all }k\geq 1,
\end{equation*}%
which proves (ii). Combining (i) and (ii), $p_{x}^{\ast \mathrm{ab}}(k)$ is
uniformly bounded in $k$ precisely when $x$ is doubly periodic.
\end{proof}

\begin{proof}[Proof of Proposition \protect\ref{prop:sharp-bound}]
Choose $\alpha ,\beta >0$ such that $1,\alpha ,\beta $ are linearly
independent over $\mathbb{Q}$, and define 
\begin{equation*}
\ell :\mathbb{Z}^{2}\rightarrow \mathbb{R}\text{, }\ell (m,n)=n-\alpha m%
\text{ }(\left( m,n\right) \in \mathbb{Z}^{2}).
\end{equation*}%
Set%
\begin{equation*}
x(z):=\sharp \{j\in \{1,\ldots ,r-1\}:\ell (z)>j\beta \},\text{ }z\in 
\mathbb{Z}^{2}.
\end{equation*}%
For $1\leq j\leq r-1,$ let%
\begin{equation*}
L_{j}:=\{(u,\alpha u+j\beta ):u\in \mathbb{R}\}.
\end{equation*}%
The lines $L_{1},\ldots ,L_{r-1}$ are parallel and divide $\mathbb{R}^{2}$
into $r$ open strips. By the rational independence of $1,\alpha ,\beta $,
one has $\mathbb{Z}^{2}\cap L_{j}=\varnothing $ for all $1\leq j\leq r-1$,
and hence $x(z)$ is precisely the label of the strip containing $z$.

Set 
\begin{equation*}
D:=\ell (\mathbb{Z}^{2})=\mathbb{Z}-\alpha \mathbb{Z}.
\end{equation*}%
Since $\alpha $ is irrational, $D$ is dense in $\mathbb{R}$. Then $D$ meets
each of the $r$ intervals 
\begin{equation*}
(-\infty ,\beta ),\qquad (j\beta ,(j+1)\beta )\ (1\leq j\leq r-2),\qquad
((r-1)\beta ,+\infty ),
\end{equation*}%
and therefore every letter of $\mathbb{A}$ occurs in $x$.

We first prove (\ref{eq:H_B-0}). For $\varnothing \neq B\subsetneq \mathbb{A}%
,$ define 
\begin{equation*}
\chi _{B}(u):=\mathbf{1}_{B}(\sharp \{j\in \{1,\ldots ,r-1\}:u>j\beta \}),%
\text{ }u\in \mathbb{R}.
\end{equation*}%
Then 
\begin{equation}
\pi _{B}(x)(z)=\chi _{B}(\ell (z)),\text{ }z\in \mathbb{Z}^{2}.
\label{eq:chiB}
\end{equation}%
Let $S_{B}$ be the discontinuity set of $\chi _{B}$, then $\varnothing \neq
S_{B}\subsetneq \{j\beta \}_{j=1}^{r-1}$ since $B$ is nonempty and proper.

Suppose $q\in H_{B}$. Then, for every $z\in \mathbb{Z}^{2}$, by (\ref%
{eq:chiB}), 
\begin{equation*}
\chi _{B}(\ell (z)+\ell (q))=\chi _{B}(\ell (z+q))=\pi _{B}(x)(z+q)=\pi
_{B}(x)(z)=\chi _{B}(\ell (z)).
\end{equation*}%
It follows that 
\begin{equation}
\chi _{B}(u+\ell (q))=\chi _{B}(u)\text{, }u\in D.  \label{eq:chiB+Delta}
\end{equation}%
Since the two sides of (\ref{eq:chiB+Delta}) are step functions with
finitely many discontinuities and agree on the dense subset $D,$ their
discontinuity sets coincide. Thus one has $S_{B}-\ell (q)=S_{B},$ which
implies $\ell (q)=0.$ As $\alpha $ is irrational and $q\in \mathbb{Z}^{2},$
we have $q=0$, and hence 
\begin{equation*}
H_{B}=\{0\}.
\end{equation*}

Next we present a recurrence direction. Let 
\begin{equation*}
\theta :=\func{dir}(1,\alpha ).
\end{equation*}%
For $N\geq 1,$ let 
\begin{equation*}
\eta _{N}:=\min_{\substack{ z\in \Sigma _{N} \\ 1\leq j\leq r-1}}\left\vert
\ell (z)-j\beta \right\vert >0,
\end{equation*}%
where positivity follows from the fact that $\mathbb{Z}^{2}\cap
L_{j}=\varnothing .$ Let $v_{n}/u_{n}$ be the continued-fraction convergents
of $\alpha $. Then we have%
\begin{equation*}
\left\vert v_{n}-\alpha u_{n}\right\vert \rightarrow 0\text{ and }\func{dir}%
(u_{n},v_{n})\rightarrow \theta .
\end{equation*}%
Given $N\geq 1$ and $\varepsilon >0$, take $n$ large enough such that 
\begin{equation*}
|v_{n}-\alpha u_{n}|<\eta _{N}\text{ and }d_{\pi }(\func{dir}%
(u_{n},v_{n}),\theta )<\varepsilon .
\end{equation*}%
Set $q_{n}:=(u_{n},v_{n})\in \mathbb{Z}^{2}$. Then for every $z\in \Sigma
_{N}$ and $1\leq j\leq r-1$, we have%
\begin{equation*}
|\ell (z)-j\beta |\geq \eta _{N}>|\ell (q_{n})|,
\end{equation*}%
and hence $\ell (z)-j\beta $ and $\ell (z+q_{n})-j\beta $ have the same
sign. Therefore we obtain that 
\begin{equation*}
x[\Sigma _{N}+q_{n}]=x[\Sigma _{N}].
\end{equation*}%
Thus $\theta $\ is a recurrence direction. By (\ref{eq:H_B-0}), we obtain
that $x$ is non-doubly periodic by projection and $D_{\mathrm{bad}%
}=\varnothing .$ Hence the recurrence direction $\theta $ is transverse. By
(i) of Theorem \ref{thm:main}, we have%
\begin{equation}
p_{x}^{\ast \mathrm{ab}}(k)\geq (r-1)k+1\qquad (k\geq 1).
\label{eq:sharp-lower}
\end{equation}

It remains to prove the reverse inequality. Let $F\subset \mathbb{Z}^{2}$
with $\sharp F=k.$ For each $f\in F$, the value $x(t+f)$ is determined by
the position of $\ell (t+f)=\ell (t)+\ell (f)$ relative to the thresholds $%
\beta ,2\beta ,\ldots ,(r-1)\beta .$ Thus the values 
\begin{equation*}
j\beta -\ell (f),\text{\ }f\in F,\text{\ }1\leq j\leq r-1,
\end{equation*}%
divide $\mathbb{R}$ into at most $k(r-1)+1$ open intervals. Moreover, no
value $\ell (t)$ $(t\in \mathbb{Z}^{2})$ is one of these boundary points,
since otherwise $\ell (t+f)=j\beta $ for some $f\in F$, contrary to $%
L_{j}\cap \mathbb{Z}^{2}=\varnothing $.

If $\ell (t)$ and $\ell (s)$ lie in the same interval, then 
\begin{equation*}
x(t+f)=x(s+f),\text{ }f\in F.
\end{equation*}%
Hence the $F$-pattern takes at most $k(r-1)+1$ values, and therefore we have%
\begin{equation*}
p_{x}^{\mathrm{ab}}(F)\leq (r-1)k+1.
\end{equation*}%
Together with (\ref{eq:sharp-lower}), we have 
\begin{equation*}
p_{x}^{\ast \mathrm{ab}}(k)=(r-1)k+1\text{\ for all }k\geq 1.
\end{equation*}
\end{proof}

\bigskip

\section{Examples}

\label{sec:ex}

In this section, we first give an example showing that the non-doubly
periodic by projection assumption in Theorem \ref{thm:main} cannot be
weakened to mere non-doubly periodicity. The construction relies on a basic
result concerning the maximal pattern complexity of bi-infinite binary
words. We therefore begin by recalling the corresponding definition on $%
\mathbb{Z}$ and providing the one-dimensional result that will be used below.

Let $x\in \mathbb{A}^{\mathbb{Z}}.$ For $k\geq 1$, let $\Sigma _{k}(\mathbb{Z%
}):=\{S\subset \mathbb{Z}:\sharp S=k\}.$ We call an element $%
S=\{s_{1}<s_{2}<\cdots <s_{k}\}\in \Sigma _{k}(\mathbb{Z})$ a $k$-\emph{%
pattern}. Set $x[S]:=x(s_{1})x(s_{2})\cdots x(s_{k})\in \mathbb{A}^{k}.$ For
each $z\in \mathbb{Z},$ the word $x[z+S]$ is called an $S$-\emph{factor} of $%
x$. We denote the set of all $S$-factors of $x$ by $\mathcal{F}_{x}(S)$. The 
\emph{pattern complexity }$p_{x}(S)$ is defined by%
\begin{equation*}
p_{x}(S):=\#\mathcal{F}_{x}(S),
\end{equation*}%
and the \emph{maximal pattern complexity }of $x$ is defined by 
\begin{equation*}
p_{x}^{\ast }(k):=\sup_{S\in \Sigma _{k}(\mathbb{Z})}p_{x}(S).
\end{equation*}

\begin{lemma}
\label{lem:binary-recurrent-aperiodic} Let $x\in \{a,b\}^{\mathbb{Z}}$ be
recurrent and aperiodic. Then for every positive integer $k$, 
\begin{equation*}
p_{x}^{\ast \mathrm{ab}}(k)=k+1.
\end{equation*}
\end{lemma}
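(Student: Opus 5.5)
The upper bound $p_{x}^{\ast \mathrm{ab}}(k)\leq k+1$ is immediate, exactly as in (\ref{eq:binary-upper}). A word over two letters of length $k$ has Parikh vector $(k-j,j)$ with $0\leq j\leq k$, so there are at most $k+1$ Abelian classes. All the work is therefore in the lower bound $p_{x}^{\ast \mathrm{ab}}(k)\geq k+1$. For this I would rerun the machinery of Sections \ref{sec:avoid}--\ref{subsec:lower bound} in dimension one, over $\mathbb{Z}$ instead of $\mathbb{Z}^{2}$, with $r=2$.

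For a binary word, both nonempty proper subsets $B=\{a\},\{b\}$ give $H_{B}=\func{Per}(x)$. Since $x$ is aperiodic, $H_{B}=\{0\}$, so every forbidden coset in (FA) is a single point. The key step is then the one-dimensional form of (FA). Given a finite $E\subset \mathbb{Z}$, I need infinitely many $E$-return vectors $q\in \mathbb{Z}$, that is, $q$ with $x(z+q)=x(z)$ for $z\in E$.

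This follows from recurrence. Choose $N$ with $E\subset[-N,N]$. Recurrence of $x$ means the block $x[-N,N]$ reoccurs at some nonzero shift. To get infinitely many shifts, I would repeat the argument of Lemma \ref{lem:infinity-return}. Suppose the set of nonzero $[-N',N']$-return vectors, taken over all $N'\geq N$, were finite. Then a single $q\neq 0$ would be a return vector for arbitrarily large $N'$, hence a period of $x$, which is impossible since $x$ is aperiodic. Hence $R_{E}\setminus\{0\}$ is infinite, and we can always avoid finitely many points. This is (FA).

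I expect the main obstacle to be pinning down which notion of recurrent is meant. If it is one-sided, for instance only forward returns, the argument is unchanged, because I only need infinitely many nonzero return vectors. If instead recurrence only gives that every factor occurs at least twice, I still get arbitrarily large return vectors, since larger windows force distinct shifts unless a period exists.

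With (FA) in hand, the proofs of Proposition \ref{prop:safe-sampling}, Lemma \ref{lem:tail-subsequence} and Proposition \ref{prop:standard-family} go through verbatim with $\mathbb{Z}$ in place of $\mathbb{Z}^{2}$. They use only Lemma \ref{lem:realize}, which holds for any orbit closure, the Ramsey Lemma \ref{lem:Kamae-ramsey}, and (FA). These results give a sampling sequence $(h_{n})$ satisfying (H$_1$) and a family $\Omega$ satisfying (H$_2$) and (\ref{eq:H3-Ramsey}). Next, Lemma \ref{lem:letter-graph} goes through because a disconnected $\Gamma_{p,q}$ would make $h_{q}-h_{p}$ a period of $\pi_{B}(x)$, hence of $x$. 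Then Lemmas \ref{lem:tail-graph}, \ref{lem:parikh-segment} and \ref{lem:tree-count}, whose proofs are purely combinatorial on $\Omega$, give $k+1$ distinct Parikh vectors on $F=\{h_{0},\ldots ,h_{k-1}\}$. Hence $p_{x}^{\mathrm{ab}}(F)\geq k+1$, and combined with the upper bound this proves the lemma. As an alternative route, one could also cite Theorem \ref{thm:K-W-Z} applied to a one-sided tail of $x$, but checking that this tail inherits recurrence and aperiodicity by projection seems no easier than the direct adaptation above.
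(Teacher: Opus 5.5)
Your proposal is correct, but the paper proves the lower bound by a different route. It does not rerun its own (FA), sampling and Ramsey machinery over $\mathbb{Z}$. Instead it reduces to the one-sided theorem of Kamae, Widmer and Zamboni (Theorem~\ref{thm:K-W-Z}):
\begin{itemize}
\item It uses that the return sets $R_N$ of $x[-N,N]$ are nested and infinite to find a sign $\varepsilon\in\{-1,1\}$ with $\sup\{\varepsilon q: q\in R_N\}=+\infty$ for every $N$.
\item It sets $\omega(n):=x(\varepsilon n)$ for $n\in\mathbb{N}$ and checks that $\omega$ is recurrent and not eventually periodic. An eventual period $p$ of $\omega$ is pushed back, via large returns in direction $\varepsilon$, to a global period $\varepsilon p$ of $x$.
\item It applies Theorem~\ref{thm:K-W-Z} with $r=2$ and transfers the bound through $\func{Par}(\omega[n+S])=\func{Par}(x[\varepsilon n+\varepsilon S])$.
\end{itemize}
So the tail route you set aside is exactly what the paper does. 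The one idea it genuinely needs is the choice of $\varepsilon$. The naive right half of a recurrent aperiodic bi-infinite word can fail to be recurrent; it can even be eventually constant, with every return of a central block lying to the left.

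Your route avoids this sign issue entirely, because (FA) over $\mathbb{Z}$ only needs infinitely many nonzero return vectors, of either sign. Your individual checks are sound:
\begin{itemize}
\item (FA) holds, since $H_{\{a\}}=H_{\{b\}}=\func{Per}(x)=\{0\}$, so the forbidden cosets are single points.
\item $\Gamma_{p,q}$ is connected, since a disconnection would make $h_q-h_p\neq 0$ a period of $x$.
\item Lemmas~\ref{lem:tail-graph}--\ref{lem:tree-count} are purely combinatorial on $\Omega$ and carry over unchanged.
\end{itemize}

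The trade-off is length versus self-containment. The paper's proof is a short reduction that uses the one-dimensional theorem as a black box and keeps the example section independent of the main machinery. Yours essentially re-derives the Kamae--Widmer--Zamboni argument, which Section~\ref{sec:sampling} and Subsection~\ref{subsec:lower bound} adapt, and it still relies on their Ramsey Lemma~\ref{lem:Kamae-ramsey}. In exchange, it shows that this machinery does not depend on the ambient group once $H_B=\{0\}$ and return vectors are unbounded.
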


\begin{proof}
Let $S\in \Sigma _{k}(\mathbb{Z})$. Clearly, for every positive integer $k,$
we have 
\begin{equation}
p_{x}^{\ast \mathrm{ab}}(k)\leq k+1.  \label{eq:binary-upper-bound}
\end{equation}

It remains to prove the reverse inequality. For $N\geq 1$, set $%
I_{N}:=[-N,N]\cap \mathbb{Z}$ and 
\begin{equation*}
R_{N}:=\{q\in \mathbb{Z}:x(j+q)=x(j)\ \text{for every }j\in I_{N}\}.
\end{equation*}%
Since $x$ is recurrent, it follows that each $R_{N}$ is infinite and $%
R_{N+1}\subset R_{N}$ $(N\geq 1).$

We claim that there exists $\varepsilon \in \{-1,1\}$ such that 
\begin{equation}
\sup \{\varepsilon q:q\in R_{N}\}=+\infty ,\text{\ }\forall N\geq 1.
\label{eq:crd}
\end{equation}%
Indeed, otherwise there exist $N_{+},N_{-}\geq 1$ such that $R_{N_{+}}$ is
bounded above and $R_{N_{-}}$ is bounded below. For $N=\max \{N_{+},N_{-}\}$%
, we have%
\begin{equation*}
R_{N}\subset R_{N_{+}}\cap R_{N_{-}}.
\end{equation*}%
Hence $R_{N}$ would be bounded and hence finite, a contradiction.

Define 
\begin{equation*}
\omega (n):=x(\varepsilon n),\text{ }n\in \mathbb{N}.
\end{equation*}%
We first show that $\omega $ is recurrent. Let $u=\omega (i)\omega
(i+1)\cdots \omega (i+\ell -1)$ be an arbitrary finite factor of $\omega $,
and choose $N\geq i+\ell -1$. By (\ref{eq:crd}), take $q\in R_{N}$ such that 
$\varepsilon q>0$ is arbitrarily large, and set $m=i+\varepsilon q.$ For $%
\varepsilon q$ sufficiently large, $m\geq 0$, and $0\leq j<\ell $, we obtain
that%
\begin{equation*}
\omega (m+j)=x(\varepsilon (m+j))=x(\varepsilon (i+j)+q)=x(\varepsilon
(i+j))=\omega (i+j).
\end{equation*}%
Since $\varepsilon q$ can be chosen arbitrarily large, $u$ occurs infinitely
many times in $\omega $. Thus $\omega $ is recurrent.

We next show that $\omega $ is not eventually periodic. Suppose, to the
contrary, that there exist $M\geq 0$ and $p\geq 1$ such that 
\begin{equation*}
\omega (n+p)=\omega (n),\text{ }\forall n\geq M.
\end{equation*}%
Fix $i\in \mathbb{Z}$. Take $N$ large enough such that $i,$\ $i+\varepsilon
p\in I_{N}.$ By (\ref{eq:crd}), choose $q\in R_{N}$ such that $%
n:=\varepsilon (i+q)\geq M.$ Hence we have, 
\begin{equation*}
x(i+\varepsilon p)=x(i+\varepsilon p+q)=x(\varepsilon (n+p))=\omega
(n+p)=\omega (n)=x(\varepsilon n)=x(i+q)=x(i).
\end{equation*}%
Since $i\in \mathbb{Z}$ is arbitrary,\ it follows that $\varepsilon p\neq 0$
is a period of $x$, which contradicts the aperiodicity of $x$. Hence $\omega 
$ is not eventually periodic.

By Theorem \ref{thm:K-W-Z}, we obtain that $p_{\omega }^{\ast \mathrm{ab}%
}(k)=k+1$ for every positive integer $k.$ Let $S\subset \mathbb{N}$ be
finite. Then for every $n\in \mathbb{N}$, 
\begin{equation*}
\func{Par}(\omega \lbrack n+S])=\func{Par}(x[\varepsilon n+\varepsilon S]),
\end{equation*}%
and hence $p_{\omega }^{\mathrm{ab}}(S)\leq p_{x}^{\mathrm{ab}}(\varepsilon
S).$ Taking the supremum over all $k$-patterns $S\subset \Sigma _{k}(\mathbb{%
N}),$ we have 
\begin{equation*}
k+1=p_{\omega }^{\ast \mathrm{ab}}(k)\leq p_{x}^{\ast \mathrm{ab}}(k).
\end{equation*}%
Together with (\ref{eq:binary-upper-bound}), this implies that 
\begin{equation*}
p_{x}^{\ast \mathrm{ab}}(k)=k+1.
\end{equation*}
\end{proof}

For a binary alphabet, non-double periodicity is equivalent to non-doubly
periodicity by projection. Hence a counterexample requires at least three
letters.

\begin{example}
\label{ex:projection-period} Let $\omega \in \{a,b\}^{\mathbb{Z}}$ be
recurrent and aperiodic, and let $\mathbb{A}=\{a,b,c\}$. Define 
\begin{equation*}
x(m,n)=%
\begin{cases}
\omega (m), & n\ \text{even}, \\ 
c, & n\ \text{odd}.%
\end{cases}%
\end{equation*}%
Then $x$ is strongly recurrent. In fact, for every $N$, take a nonzero
return $p_{N}$ of $\omega \lbrack -N,N]$. Then 
\begin{equation*}
x[\Sigma _{N}+(p_{N},0)]=x[\Sigma _{N}]=x[\Sigma _{N}+(0,2)].
\end{equation*}%
Hence $(p_{N},0)$ and $(0,2)$ are two orthogonal return directions.

We have 
\begin{equation*}
\func{Per}(x)=\{0\}\times 2\mathbb{Z}.
\end{equation*}%
Indeed, suppose that $(p,q)\in \func{Per}(x)$. Then $q$ is even, since
otherwise an even row is mapped to a row consisting entirely of $c.$ This
means that $\omega (m+p)=\omega (m)$ for all $m$, and hence aperiodicity of $%
\omega $ forces $p=0$. Therefore, $x$ is not doubly periodic.

On the other hand, for $B=\{a,b\}$, we have 
\begin{equation*}
\pi _{B}(x)(m,n)=%
\begin{cases}
1, & n\ \text{even}, \\ 
0, & n\ \text{odd}.%
\end{cases}%
\end{equation*}%
Hence $(1,0),(0,2)\in H_{B}$ are linearly independent. Thus $\pi _{B}(x)$ is
doubly periodic, and consequently $x$ is doubly periodic by projection.

We now compute the Abelian maximal pattern complexity of $x$. We claim that 
\begin{equation}
p_{x}^{\ast \mathrm{ab}}(k)=k+2\text{ for each positive integer }k.
\label{eq:projection-example-abelian}
\end{equation}

We first prove the upper bound. For a $k$-pattern $F\in \Sigma _{k}(\mathbb{Z%
}^{2}),$ we split $F$ according to the parity of the second coordinate,
i.e., 
\begin{equation*}
F_{0}:=\{(u,v)\in F:v\equiv 0\pmod2\},\text{\ }F_{1}:=\{(u,v)\in F:v\equiv 1%
\pmod2\}.
\end{equation*}%
Set $k_{0}:=|F_{0}|,$ $k_{1}:=|F_{1}|,$ $k_{0}+k_{1}=k.$

Let $t=(t_{1},t_{2})\in \mathbb{Z}^{2}$. If $t_{2}$ is even, then every
point of $t+F_{0}$ lies on an even row and every point of $t+F_{1}$ lies on
an odd row. Hence $x(t+f)=c$ for every $f\in F_{1}$. Therefore, we have%
\begin{equation*}
\func{Par}_{F}(T^{t}x)\in \{(i,k_{0}-i,k_{1}):0\leq i\leq k_{0},\}
\end{equation*}%
where the coordinates are indexed by $a,b,c$, respectively. If $t_{2}$ is
odd, the roles of $F_{0}$ and $F_{1}$ are interchanged. Hence in this case,
we have%
\begin{equation*}
\func{Par}_{F}(T^{t}x)\in \{(j,k_{1}-j,k_{0}):0\leq j\leq k_{1},\}
\end{equation*}%
Consequently, 
\begin{equation*}
p_{x}^{\mathrm{ab}}(F)\leq (k_{0}+1)+(k_{1}+1)=k+2.
\end{equation*}%
Since $F$ is arbitrary, we obtain that 
\begin{equation}
p_{x}^{\ast \mathrm{ab}}(k)\leq k+2.  \label{eq:projection-example-upper}
\end{equation}

It remains to prove that this bound is attained. Since $\omega $ is a
recurrent aperiodic binary word, Lemma \ref{lem:binary-recurrent-aperiodic}
implies that 
\begin{equation*}
p_{\omega }^{\ast \mathrm{ab}}(k)=k+1.
\end{equation*}%
Hence there exists a $k$-pattern $S\in \Sigma _{k}(\mathbb{Z})$ such that $%
p_{\omega }^{\mathrm{ab}}(S)=k+1.$ After translating $S$ if necessary, we
may assume that $0\in S$. Define%
\begin{equation*}
F_{S}:=\{(u,0):u\in S\}\subset \mathbb{Z}^{2},
\end{equation*}%
then $F_{S}\in \Sigma _{k}(\mathbb{Z}^{2}).$

For $t=(t_{1},2q)\in \mathbb{Z}\times 2\mathbb{Z}$, all points of $t+F_{S}$
lie on an even row, and hence 
\begin{equation*}
x(t_{1}+u,2q)=\omega (t_{1}+u)\text{ }(u\in S).
\end{equation*}%
Therefore, 
\begin{equation*}
\func{Par}_{F_{S}}(T^{(t_{1},2q)}x)=(\left\vert \omega \lbrack
t_{1}+S]\right\vert _{a},\left\vert \omega \lbrack t_{1}+S]\right\vert
_{b},0),
\end{equation*}%
thus 
\begin{equation*}
\sharp \{\func{Par}_{F_{S}}(T^{(t_{1},2q)}x):t_{1}\in \mathbb{Z}\}=p_{\omega
}^{\mathrm{ab}}(S)=k+1.
\end{equation*}

On the other hand, for $t=(t_{1},2q+1)$, all points of $t+F_{S}$ lie on an
odd row. Hence 
\begin{equation*}
x(t_{1}+u,2q+1)=c\text{\ }(u\in S),
\end{equation*}%
and the corresponding Parikh vector is $(0,0,k).$ Thus%
\begin{equation*}
p_{x}^{\mathrm{ab}}(F_{S})=k+2.
\end{equation*}%
It follows that 
\begin{equation}
p_{x}^{\ast \mathrm{ab}}(k)\geq k+2.  \label{eq:projection-example-lower}
\end{equation}

Combining (\ref{eq:projection-example-upper}) and (\ref%
{eq:projection-example-lower}), we obtain (\ref%
{eq:projection-example-abelian}).
\end{example}

We next give a natural class of four-letter two-dimensional words that are
strongly recurrent and non-doubly periodic by projection.

\begin{proposition}
\label{prop:product-example} Let $\omega ,\varpi \in \{0,1\}^{\mathbb{Z}}$
be recurrent and aperiodic. Define 
\begin{equation*}
x(m,n)=(\omega (m),\varpi (n))\in \mathbb{A}^{\mathbb{Z}^{2}},
\end{equation*}%
where $\mathbb{A}=\{0,1\}^{2}.$ Then $x$ is strongly recurrent and
non-doubly periodic by projection. Consequently, 
\begin{equation}
p_{x}^{\ast \mathrm{ab}}(k)\geq 3k+1\text{\ }(k\geq 1).
\label{eq:s-lower bound}
\end{equation}
\end{proposition}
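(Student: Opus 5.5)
The plan is to verify strong recurrence and non-double periodicity by projection directly, and then obtain \eqref{eq:s-lower bound} from Theorem~\ref{thm:main}(ii) with $r=\#\mathbb{A}=4$, since $(r-1)k+1=3k+1$.

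\emph{Strong recurrence.} Given $N$, recurrence of $\omega$ gives a nonzero $p_N$ with $\omega(m+p_N)=\omega(m)$ for all $|m|\le N$. Recurrence of $\varpi$ gives a nonzero $q_N$ with $\varpi(n+q_N)=\varpi(n)$ for all $|n|\le N$. Since $x(m,n)$ depends on $m$ only through its first coordinate and on $n$ only through its second, we get
\[
x[\Sigma_N+(p_N,0)]=x[\Sigma_N]=x[\Sigma_N+(0,q_N)].
\]
The two vectors are orthogonal, so the angle condition holds with $\delta=\pi/2$. This is the same argument as in Example~\ref{ex:projection-period}.

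\emph{No projection is doubly periodic.} Fix $\varnothing\neq B\subsetneq\mathbb{A}$ and suppose $H_B$ contains linearly independent vectors. Then $H_B$ has finite index, so it contains $(P,0)$ and $(0,P)$ for some $P\ge1$. We need to show that $f:=\mathbf{1}_B$, viewed as a function of $(a,b)\in\{0,1\}^2$, cannot satisfy both $f(\omega(m+P),\varpi(n))=f(\omega(m),\varpi(n))$ and $f(\omega(m),\varpi(n+P))=f(\omega(m),\varpi(n))$ for all $m,n$.

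Both words are aperiodic, so each uses both letters. Hence the $P$-periodicity of $\omega$ fails at some $m_0$: the pair $\{\omega(m_0),\omega(m_0+P)\}=\{0,1\}$. Since $\varpi$ takes both values, the first identity forces $f(0,b)=f(1,b)$ for $b=0$ and $b=1$. So $f$ does not depend on the first coordinate. Symmetrically, aperiodicity of $\varpi$ forces $f(a,0)=f(a,1)$ for both $a$. Thus $f$ is constant, contradicting $\varnothing\neq B\subsetneq\mathbb{A}$. Therefore $\operatorname{rank}H_B\le1$ for every such $B$, and $x$ is non-doubly periodic by projection.

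With both hypotheses in place, Theorem~\ref{thm:main}(ii) gives $p_x^{\ast\mathrm{ab}}(k)\ge 3k+1$.

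The only delicate point is the step above: we must pick the failure point $m_0$ of $P$-periodicity and then range over every value $b$ of $\varpi(n)$. This relies on each factor word being aperiodic, hence non-constant, and the argument is otherwise routine.
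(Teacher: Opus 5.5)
Your proposal is correct and follows essentially the same route as the paper. Strong recurrence comes from the orthogonal return vectors $(p_N,0)$ and $(0,q_N)$, the case $\func{rank}H_B=2$ is ruled out by testing $\mathbf{1}_B$ along rows and columns using aperiodicity of $\omega$ and $\varpi$, and the bound then follows from Theorem~\ref{thm:main}(ii) with $r=4$. The paper splits into cases according to whether $\mathbf{1}_B$ depends on the first coordinate, deriving a period of $\omega$ or of $\varpi$. You instead argue symmetrically that $\mathbf{1}_B$ is independent of both coordinates and hence constant, which is only a cosmetic reorganization.
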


\begin{proof}
For every $N\geq 1$, since $\omega $ and $\varpi $ are recurrent, there
exist $p_{N},q_{N}\in \mathbb{Z}\setminus \{0\}$ such that 
\begin{equation*}
\omega \lbrack I_{N}+p_{N}]=\omega \lbrack I_{N}]\text{ and }\varpi \lbrack
I_{N}+q_{N}]=\varpi \lbrack I_{N}].
\end{equation*}%
Therefore, we have%
\begin{equation*}
x[\Sigma _{N}+(p_{N},0)]=x[\Sigma _{N}]=x[\Sigma _{N}+(0,q_{N})].
\end{equation*}%
The two nonzero vectors $(p_{N},0),$\ $(0,q_{N})$ have directions separated
by $\pi /2$. Hence $x$ is strongly recurrent.

For a nonempty $B\subsetneq \mathbb{A}$, assume that $\pi _{B}(x)$ is doubly
periodic. Then $[\mathbb{Z}^{2}:H_{B}]<\infty $, and hence there exist $%
P,Q\geq 1$ such that $(P,0),(0,Q)\in H_{B}.$

Suppose first that $\mathbf{1}_{B}(0,b)\neq \mathbf{1}_{B}(1,b)$ for some $%
b\in \{0,1\}$. Since $\varpi $ is aperiodic, both letters occur in $\varpi $%
. Hence $\varpi (n_{0})=b$ for some $n_{0}\in \mathbb{Z}$. Then for every $%
m\in \mathbb{Z}$, we have 
\begin{equation*}
\mathbf{1}_{B}(\omega (m+P),b)=\pi _{B}(x)(m+P,n_{0})=\pi _{B}(x)(m,n_{0})=%
\mathbf{1}_{B}(\omega (m),b).
\end{equation*}%
Since $\mathbf{1}_{B}(0,b)\neq \mathbf{1}_{B}(1,b),$ it follows that $\omega
(m+P)=\omega (m)$\ $(m\in \mathbb{Z}),$ which contradicts the aperiodicity
of $\omega $.

It remains to consider the case where $\mathbf{1}_{B}(0,b)=\mathbf{1}%
_{B}(1,b)$ for every $b\in \{0,1\}$. Since $\varnothing \neq B\subsetneq A$,
the function $\mathbf{1}_{B}$ is nonconstant, and hence $\mathbf{1}%
_{B}(0,0)\neq \mathbf{1}_{B}(0,1).$ Since $(0,Q)\in H_{B}$, for every $%
m,n\in \mathbb{Z}$ we obtain 
\begin{equation*}
\mathbf{1}_{B}(0,\varpi (n+Q))=\pi _{B}(x)(m,n+Q)=\pi _{B}(x)(m,n)=\mathbf{1}%
_{B}(0,\varpi (n)).
\end{equation*}%
Therefore $\varpi (n+Q)=\varpi (n)$\ $(n\in \mathbb{Z}),$ which contradicts
the aperiodicity of $\varpi $.

Thus $\pi _{B}(x)$ is not doubly periodic. Since $B\subsetneq A$ was
arbitrary, $x$ is non-doubly periodic by projection. By (ii) of Theorem \ref%
{thm:main}, \ we obtain the stated lower bound (\ref{eq:s-lower bound}).
\end{proof}


\bigskip

\begin{thebibliography}{99}
\bibitem{CH1973} E. M. Coven, G. A. Hedlund, Sequences with minimal block
growth, Math. Syst. Theory \textbf{7} (1973) 138--153.

\bibitem{FP2023} G. Fici, S. Puzynina, Abelian combinatorics on words: A
survey, Comput. Sci. Rev. \textbf{47} (2023) 100532.

\bibitem{KR2006} T. Kamae, H. Rao, Maximal pattern complexity over $\ell $
letters, European J. Combin. \textbf{27} (2006) 125--137.

\bibitem{KRX2006} T. Kamae, H. Rao, Y.-M. Xue, Maximal pattern complexity of
two-dimensional words, Theoret. Comput. Sci. \textbf{359} (2006) 15--27.

\bibitem{KX2004} T. Kamae, Y.-M. Xue, Two dimensional word with $2k$ maximal
pattern complexity, Osaka J. Math. \textbf{41}(2) (2004) 257--265.

\bibitem{KWZ2015} T. Kamae, S. Widmer, L. Q. Zamboni, Abelian maximal
pattern complexity of words, Ergodic Theory Dynam. Systems \textbf{35}
(2015) 142--151.

\bibitem{KZ2002a} T. Kamae, L. Q. Zamboni, Sequence entropy and the maximal
pattern complexity of infinite words, Ergodic Theory Dynam. Systems \textbf{%
22} (2002) 1191--1199.

\bibitem{KZ2002b} T. Kamae, L. Q. Zamboni, Maximal pattern complexity for
discrete systems, Ergodic Theory Dynam. Systems \textbf{22} (2002)
1201--1214.

\bibitem{MH1940} M. Morse, G. A. Hedlund, Symbolic dynamics II: Sturmian
trajectories, Amer. J. Math. \textbf{62}(1) (1940) 1--42.

\bibitem{Puzynina2019} S. Puzynina, Aperiodic two-dimensional words of small
abelian complexity, Electron. J. Combin. \textbf{26}(4) (2019) \#P4.15.

\bibitem{QRWX2010} Y.-H. Qu, H. Rao, Z.-Y. Wen, Y.-M. Xue, Maximal pattern
complexity of higher dimensional words, J. Combin. Theory Ser. A \textbf{117}%
(5) (2010), 489--506.

\bibitem{RSZ2011} G. Richomme, K. Saari, L. Q. Zamboni, Abelian complexity
of minimal subshifts, J. Lond. Math. Soc. 83(1) (2011) 79--95.
\end{thebibliography}
\end{document}